	\definecolor{darkgreen}{cmyk}{0.5,0.5,1,0.4}
	\definecolor{grayA}{gray}{0.85}
	\definecolor{grayB}{gray}{0.65}
	\declaretheoremstyle[bodyfont=\normalfont]{noncursive}
	\declaretheorem[name=Theorem,numberwithin=section]{theorem}
	\declaretheorem[sibling=theorem,style=noncursive]{definition}
	\declaretheorem[sibling=theorem]{corollary} 
	\declaretheorem[sibling=theorem]{proposition}
	\declaretheorem[sibling=theorem]{lemma}
	\declaretheorem[sibling=theorem,style=remark]{remark}
	\declaretheorem[sibling=theorem,style=definition]{example}
	\numberwithin{equation}{section} 
	\newcommand{\N}{\mathbb N}
	\newcommand{\R}{\mathbb R}
	\newcommand{\C}{\mathbb C}
	\DeclareMathOperator{\UnitSphere}{\mathbb S^1}
	\DeclareMathOperator{\re}{Re}
	\DeclareMathOperator{\im}{Im}
	\DeclareMathOperator{\Aut}{Aut}
	\DeclareMathOperator{\Auto}{Aut_0}
	\DeclareMathOperator{\stab}{stab}
	\DeclareMathOperator{\sgn}{sgn}
	\DeclareMathOperator{\Span}{span}
	\DeclareMathOperator{\id}{id}
	\DeclareMathOperator{\imu}{\mathrm i}
	\DeclareMathOperator{\eps}{\varepsilon}
	\DeclareMathOperator{\NTwo}{\mathcal N}
	\DeclareMathOperator{\FTwo}{\mathcal F}
	\DeclareMathOperator{\Isotropies}{\mathcal G}
	\newcommand{\HeisenbergOne}[1]{\mathbb H^{#1}}
	\newcommand{\HeisenbergTwo}[2]{\mathbb H_{#1}^{#2}}
	\newcommand{\Sphere}[1]{\mathbb S^{#1}}
	\newcommand{\Hyperquadric}[2]{\mathbb S_{#1}^{#2}}
	\newcommand{\HomHyperquadric}[2]{\hat {\mathbb S}_{#1}^{#2}}
	\newcommand{\MapMainTheoremOne}[1]{H_{#1}} 
	\newcommand{\MapMainTheoremTwo}[2]{H_{#1}^{#2}}
	\newcommand{\MapOneParameterFamilyTwo}[2]{G_{#1}^{#2}} 
	\newcommand{\MapOneParameterFamilyThree}[3]{G_{#1,#2}^{#3}}	
	\newcommand{\MapParticularValueTwo}[2]{\mathcal G_{#1}^{#2}} 
	\newcommand{\SimpleMapParticularValueTwo}[2]{\mathcal H_{#1}^{#2}}
	\newcommand{\SimpleMapTranslationParticularValueTwo}[2]{\mathcal H_{#1,p_0}^{#2}}
	\newcommand{\MapTranslationParticularValueTwo}[2]{\mathcal G_{#1,p}^{#2}}
	\newcommand{\MapRenormalizationTranslationParticularValueTwo}[2]{\widetilde{\mathcal G}_{#1,p}^{#2}}
	\newcommand{\GeneralUpDown}[3]{#1_{#2}^{#3}}
\begin{document}
\title{Classification of Holomorphic Mappings of \\ Hyperquadrics from $\mathbb C^2$ to $\mathbb C^3$}
\author{ Michael Reiter}
\subjclass[2010]{Primary 32H02, 32V30}
%\keywords{}
 \thanks{The author was supported by the FWF, projects Y377 and I382, and QNRF, project NPRP 7-511-1-098.}

\pagestyle{plain}

\begin{abstract}
We give a new proof of Faran's and Lebl's results by means of a new CR-geometric approach and classify all holomorphic mappings from the sphere in $\mathbb C^2$ to Levi-nondegenerate hyperquadrics in $\mathbb C^3$. We use the tools developed by Lamel, which allow us to isolate and study the most interesting class of holomorphic mappings. This family of so-called nondegenerate and transversal maps we denote by $\mathcal F$. For $\mathcal F$ we introduce a subclass $\mathcal N$ of maps which are normalized with respect to the group $\mathcal G$ of automorphisms fixing a given point. With the techniques introduced by Baouendi--Ebenfelt--Rothschild and Lamel we classify all maps in $\mathcal N$. This intermediate result is crucial to obtain a complete classification of $\mathcal F$ by considering the transitive part of the automorphism group of the hyperquadrics.
\end{abstract}

\maketitle

\tableofcontents

\section{Introduction and Results}
\label{sec:intro}

Poincar\'{e} \cite{poincare} asked whether for two given real-analytic real hypersurfaces in $\C^2$ one can find holomorphic mappings sending one into the other. He also gave an intuitive answer, originally for biholomorphisms, that for two given arbitrary real-analytic hypersurfaces in general it is unlikely to find holomorphic mappings sending locally one hypersurface into the other. \\
Considerable work was done classifying Levi-nondegenerate hypersurfaces of $\C^N, N\geq 2$ up to biholomorphisms: In $\C^2$, this ``biholomorphic equivalence problem'' was solved by Cartan \cite{cartan1,  cartan2} and for $N\geq2$ by Tanaka \cite{tanaka} and Chern--Moser \cite{chernmoser}.\\
For the class of strictly pseudoconvex hypersurfaces Poincar\'{e}'s question is answered by this classification of Levi-nondegenerate hypersurfaces and results by Pin\v{c}uk \cite{pincuk} and Alexander \cite{alexander1, alexander2}. They proved that any non-constant holomorphic self-mapping of a strictly pseudoconvex hypersurface in $\C^N$ is necessarily an automorphism. This implies that if we consider two biholomorphically equivalent strictly pseudoconvex hypersurfaces, then modulo the action of the automorphisms of the source and target hypersurface there is only one non-constant holomorphic map.

For $N'> N$ and a mapping $H: \C^N \rightarrow \C^{N'}$ we refer to the number $N'-N$ as the \textit{codimension} of $H$. If we consider holomorphic mappings of high codimension the situation changes drastically compared to the equidimensional case. Here models of Levi-nondegenerate hypersurfaces, i.e., hyperquadrics received a lot of attention. For $k\in \N$ and $k \leq N$ we denote the \textit{hyperquadric $\Hyperquadric{k}{N}$ of signature $(k,N-k)$} in $\C^N$ by 
\begin{align*}
\Hyperquadric{k}{N}  & \coloneqq \bigl\{(z_1,\ldots, z_N) \in \C^N:~|z_1|^2 +\ldots +|z_k|^2 -|z_{k+1}|^2 - \ldots - |z_N|^2 = 1\bigr\},
\end{align*}
and write $\Sphere{N} \coloneqq \Hyperquadric{N}{N}$ for the \textit{sphere} in $\C^N$. While studying holomorphic mappings of hyperquadrics it is natural to introduce an equivalence relation for these mappings. We consider the homogeneous model $\HomHyperquadric{k}{N}$ of $\Hyperquadric{k}{N}$ given by
\begin{align*}
\HomHyperquadric{k}{N}  & \coloneqq \bigl\{(z_1,\ldots, z_N,t) \in \C^{N+1}:~|z_1|^2 +\ldots +|z_k|^2 -|z_{k+1}|^2 - \ldots - |z_N|^2 -|t|^2 = 0\bigr\}.
\end{align*}
Let us denote by $SU(N-k,k+1)$ the special unitary group with respect to the Hermitian form  in $\C^{N+1}$ with signature $(N-k,k+1)$ induced by the quadratic form which occurs in the definition of $\HomHyperquadric{k}{N}$. The group of automorphisms of $\HomHyperquadric{k}{N}$ is $SU(N-k,k+1)/K$, where $K$ is the subgroup of $SU(N-k,k+1)$ consisting of diagonal matrices with all entries being equal to $\zeta$ a $(N+1)$-root of unity, see e.g. \cite[\S1]{chernmoser} or \cite[\S2]{BER00}.\\ 
Let $V \subset \C^N$ be an open neighborhood of $p \in \Hyperquadric{k}{N}$. Any holomorphic mapping $H: V \rightarrow \C^{N'}$ which satisfies $H(V \cap \Hyperquadric{k}{N}) \subset \Hyperquadric{k'}{N'}$ can be identified with a CR-mapping $\hat H: \hat V \subset \C^{N+1} \rightarrow \C^{N'+1}$ for some open neighborhood $\hat V$ of $\hat p \in \HomHyperquadric{k}{N}$ satisfying $\hat H(\hat V \cap \HomHyperquadric{k}{N}) \subset \HomHyperquadric{k'}{N'}$. Following \cite[\S2]{faran} and \cite[sections 3.4-3.5]{lebl2} we say that two holomorphic mappings $H_1,H_2$, which both satisfy $H_m: \C^{N} \supset V_m \rightarrow \C^{N'}$, where $V_m$ is a neighborhood of $p_m \in \Hyperquadric{k}{N}$, such that $H_m(V_m \cap \Hyperquadric{k}{N}) \subset \Hyperquadric{k'}{N'}$ for $m=1,2$, are \textit{equivalent} if there exist matrices $U \in SU(N-k,k+1)$ and $U' \in SU(N'-k',k'+1)$ such that $\hat H_2 = U' \circ \hat H_1 \circ U$. 

If $N'\geq 2 N$ D'Angelo \cite{dangelo1} showed that there exist infinitely many quadratic mappings from $\Sphere{N}$ to $\Sphere{N'}$ which are not equivalent. In low codimension the family of holomorphic mappings is less rich. Webster \cite{webster} proved that for holomorphic mappings between the spheres in $\C^N$ and $\C^{N+1}$, where $N\geq 3$, there is only one equivalence class, namely the one generated by the linear embedding. Faran \cite{faran2} extended this result to holomorphic mappings of $\Sphere{N}$ to $\Sphere{N'}$ with $N \geq 3$ and $N' \leq 2N-2$, see also Huang \cite{huang1}. The case of mappings from $\Sphere{N}$ to $\Sphere{2N-1}$ for $N\geq 3$ is treated by Huang--Ji \cite{hj}, where they showed that there exist two equivalence classes of mappings.\\
To study holomorphic mappings between hyperquadrics from $\C^2$ to $\C^3$ we introduce the hypersurface $\Hyperquadric{\eps}{3}$, which for $\eps = \pm 1$ is given by  
\begin{align*}
\Hyperquadric{\pm}{3} & \coloneqq \bigl\{(z_1,z_2,z_3) \in \C^3:~|z_1|^2 + |z_2|^2 \pm |z_3|^2 = 1 \bigr\},
\end{align*}
and we set $\Sphere{3} = \Hyperquadric{+}{3}$. It is well known that $\Sphere{2}$ and $\Hyperquadric{\eps}{3}$ are the only Levi-nondegenerate hyperquadrics in $\C^2$ and $\C^3$, respectively up to biholomorphisms. \\
Faran classified holomorphic mappings between balls in $\C^2$ and $\C^3$ with certain boundary regularity. Below we formulate the main result of Faran in terms of mappings between spheres disregarding regularity issues. 

\begin{theorem}[label=theorem:faran,name=\cite{faran}]
Let $p \in \Sphere{2}$, $U \subset \C^2$ be an open and connected neighborhood of $p$ and $F: U \rightarrow \C^3$ a non-constant holomorphic mapping satisfying $F(U \cap \Sphere{2}) \subset \Sphere{3}$. Then $F$ is equivalent to exactly one of the following maps:
\begin{multicols}{2}
\begin{compactitem}
\item[\rm{(i)}] $F_1(z,w)= (z,w,0)$
\item[\rm{(ii)}] $F_2(z,w) = (z,z w,w^2)$
\item[\rm{(iii)}] $F_3(z,w) = (z^2, \sqrt{2} z w, w^2)$
\item[\rm{(iv)}] $F_4(z,w)= (z^3, \sqrt{3} z w, w^3)$
\end{compactitem}
\end{multicols}
\end{theorem}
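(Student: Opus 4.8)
The plan is to classify non-constant holomorphic maps $F\colon U\to\C^3$ with $F(U\cap\Sphere2)\subset\Sphere3$ by studying the induced CR-map at the level of the defining equations. I would place the source sphere in Heisenberg normal form (Siegel-upper-half-space coordinates) so that $\Sphere2$ becomes $\im w = |z|^2$ near a base point, and similarly for $\Sphere3$, and expand $F=(f,g,\phi)$ in a power series. The basic identity to exploit is the reality equation: on the source hypersurface we must have $\im\bigl(g(z,w)\bigr) = |f(z,w)|^2 + |\phi(z,w)|^2$, which after substituting $w = u + i|z|^2$ becomes a polynomial identity in $z,\bar z, u$. Comparing coefficients is the engine of the whole argument.

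First I would record normalization: using the automorphism group of the target (which acts transitively on $\Sphere3$ and, at a point, contains the isotropy group acting on jets) one may assume $F(0)=0$, that the differential of $F$ is in a prescribed normal form, and that the $2$-jet is reduced as far as the isotropy allows. This is exactly the kind of reduction the abstract promises via the groups $\mathcal G$ and the normalized class $\mathcal N$. Next, from the linearized reality equation one sees that the linear part of $(f,\phi)$ must be (up to the target unitary) either $(z,0)$ or more degenerate; the rank of $df$ at $0$ controls which branch we are in. If $F$ is an immersion transversal to the hypersurface, the leading term forces $f = z + O(2)$, $\phi = O(2)$, $g = w + O(2)$; the non-transversal or non-immersive cases either reduce to lower-dimensional maps (giving $F_1$, the linear embedding, after composing with automorphisms) or are excluded.

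The core step is then to push the coefficient comparison to higher order. Writing $\phi = \sum_{j\ge1}\phi_j(z)w^{\,?}$-type expansions and using the structure of the reality identity modulo the ideal generated by $(\im w - |z|^2)$, one shows the map is rational with controlled denominators, and in fact polynomial of low degree — the codimension being one forces the "degree" (in the sense of Faran/D'Angelo) to be at most $3$. Concretely, I expect to prove that after normalization $\phi$ is a monomial $c\,z^a w^b$ or $z w$-type term, $f$ is a power of $z$ times a unit, and matching the reality equation pins the exponents to the four cases $(z,w,0)$, $(z,zw,w^2)$, $(z^2,\sqrt2 zw,w^2)$, $(z^3,\sqrt3 zw,w^3)$. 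One must also verify these four are genuinely pairwise inequivalent (e.g.\ by an invariant such as the image degree or the rank of the second fundamental form) and that the listed maps do satisfy the constraint, which is a direct check.

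The main obstacle is the middle part: controlling the power series of $F$ well enough to conclude it is polynomial of bounded degree. In the equidimensional case one invokes Pin\v{c}uk/Alexander-type rigidity, but here in codimension one that is unavailable, so the finiteness has to come out of the delicate algebra of the reality identity — essentially showing that the "denominator" in the rational expression for $F$ cannot be nontrivial because $\Sphere3$ has only one negative... rather, because the signature $(3,0)$ (or $(2,1)$ for $\Hyperquadric{-}3$, treated in parallel) leaves no room. I anticipate that organizing this via the Lamel-type characterization of nondegenerate transversal maps and the BER reflection-identity machinery, as the abstract indicates, is what makes the degree bound tractable; handling the borderline non-transversal and non-finite cases cleanly, so that they collapse onto $F_1$ rather than producing spurious families, is the remaining technical hazard.
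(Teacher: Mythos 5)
Your outline reproduces the paper's starting point (Heisenberg coordinates, the complexified mapping equation, the transversality/nondegeneracy trichotomy, isotropic normalization of the low-order jet), but the two steps that actually carry the proof are missing, and the second is a structural point your plan would get wrong rather than merely leave open. Finiteness does not come from ``pushing the coefficient comparison to higher order'' in the reality identity; that process does not terminate by itself. The mechanism in the paper is a jet parametrization along the second Segre set: $2$-nondegeneracy lets one solve the system $L^{r}\rho'\bigl(H,\bar H\bigr)=0$, $r=0,1,2$, for $H(z,2\imu z\chi)$ as an explicit universal rational function $\Psi(z,\chi,j)$ of a fixed finite jet $j\subset j_0^4H$, and the requirement that $\Psi\bigl(z,w/(2\imu z),j\bigr)$ be holomorphic at $z=0$ imposes polynomial equations on $j$ whose solution set is a finite union of explicit low-dimensional families. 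Without such a determination statement your ``delicate algebra'' has no reason to close up, and your expectation that the normalized map becomes a polynomial or monomial is false: the normalized representatives are genuinely rational of degree up to $3$, e.g.\ $\bigl(2z(2+\imu w),4z^2,4w\bigr)/(4-w^2)$.

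Second --- and this is where the plan would fail even granting the above --- isotropic normalization does \emph{not} produce a finite list. It produces one-parameter families $G_{2,s}^{+}$ and $G_{3,s}^{+}$, $s\ge 0$, which are pairwise inequivalent under automorphisms fixing the base point. You invoke the transitivity of the target automorphism group only to move $F(p)$ to a normalized position; the essential further step is to re-base the map at a varying point $p$ of the source sphere, renormalize, and show that the resulting parameter $s=s(p)$ sweeps out all of $(0,\infty)$ as $p$ varies, so that each family collapses to a single orbit. Omitting this leaves you with a continuum of mutually ``inequivalent'' normalized maps and no route to the four-element list; it is also precisely where the nontrivial representatives $F_2$ and $F_4$ acquire their simple polynomial forms. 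Your final step (pairwise inequivalence of the four maps via degree and degeneracy invariants, plus the direct check that they satisfy the constraint) is correct and matches the paper.
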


Faran's proof consists of giving a characterization of so-called planar maps from $\C^2$ to $\C^3$ which send complex lines to complex planes and uses Cartan's method of moving frames. \\ 
Cima--Suffridge \cite{cs89} approached Faran's Theorem via a reflection principle deduced in \cite{cs83} by the same authors, which contains some inconsistencies when using certain degeneracy conditions. 
Recently Ji \cite{ji} gave a new proof of Faran's Theorem based on Huang's study \cite{huang1} of the Chern--Moser operator and several preceding articles \cite{hj,huang2,hjx, cjx}. In \cite{ji} a small fixable mistake leads to a wrong mapping at the very end of the article. \\ 
More recently Lebl classified mappings sending $\Sphere{2}$ to $\Hyperquadric{-}{3}$, using a classification result for quadratic maps and Faran's approach:

\begin{theorem}[label=theorem:lebl,name=\cite{lebl1}]
Let $p \in \Sphere{2}$, $U \subset \C^2$ be an open and connected neighborhood of $p$ and $L: U \rightarrow \C^3$ a non-constant holomorphic mapping satisfying $L(U \cap \Sphere{2}) \subset \Hyperquadric{-}{3}$. Then $L$ is equivalent to exactly one of the following maps:
\begin{compactitem}
\item[\rm{(i)}] $L_1(z,w) = (z,w,0)$
\item[\rm{(ii)}] $L_2(z,w) = \left(z^2, \sqrt{2} w,w^2\right)$
\item[\rm{(iii)}] $L_3(z,w) = \left(\frac 1 z, \frac {w^2} {z^2}, \frac{w} {z^2}\right)$
\item[\rm{(iv)}] $L_4(z,w) = \frac{\left(z^2 + \sqrt{3} z w + w^2 - z, w^2 + z - \sqrt{3} w -1, z^2-\sqrt{3} z w + w^2 -z\right)}{w^2 + z + \sqrt{3} w -1}$
\item[\rm{(v)}] $L_5(z,w) = \frac{\left(\sqrt[4]{2} (z w - \imu z), w^2-\sqrt{2} \imu w + 1,\sqrt[4]{2}(z w + \imu z)\right)}{w^2+\sqrt{2} \imu w +1}$
\item[\rm{(vi)}] $L_6(z,w) = \frac{\left(2 w^3, z (z^2+3), \sqrt{3} w(z^2 - 1) \right)}{3 z^2+1}$
\item[\rm{(vii)}] $L_7(z,w) = \bigl(1,\ell(z,w),\ell(z,w) \bigr)$, for an arbitrary non-constant holomorphic function $\ell:\C^2 \rightarrow \C$
\end{compactitem}
\end{theorem}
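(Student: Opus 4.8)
\emph{Strategy.} The plan is to carry out the CR-geometric program announced in the abstract: reduce to the class $\FTwo$ of nondegenerate and CR-transversal maps, classify a normalized slice $\NTwo\subset\FTwo$ by a jet computation of Baouendi--Ebenfelt--Rothschild/Lamel type, and then recover the full list by exploiting the transitive actions of $\Aut(\HomHyperquadric{2}{2})$ and $\Aut(\HomHyperquadric{2}{3})$. The argument runs parallel to the proof of the $\Sphere{3}$-target case (Theorem~\ref{theorem:faran}); only the target hyperquadric --- hence the mapping equation and the isotropy group --- changes. First I would show that every non-constant $L$ with $L(U\cap\Sphere{2})\subset\Hyperquadric{-}{3}$ is, up to the equivalence of the Introduction, either a member of the family $L_7$ or an element of $\FTwo$: a non-constant holomorphic map of hyperquadrics that fails to be nondegenerate or transversal (in Lamel's sense) has image in a degenerate affine subspace of the target, and in codimension one the only such possibility, after composing with automorphisms of source and target, is the family $(1,\ell,\ell)$, all of whose members already constitute class (vii). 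Thus the content is the classification of $\FTwo$ modulo equivalence.

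\emph{Normalization.} Fix $p\in\Sphere{2}$. Since $\Aut(\HomHyperquadric{2}{2})$ acts transitively on $\HomHyperquadric{2}{2}$, I would move the CR-lift of $p$ to a base point and bring both hyperquadrics into Heisenberg normal form, $\im w=|z|^2$ for the source and $\im w'=|z_1'|^2-|z_2'|^2$ for the target, and translate in the target so that the image point becomes the origin. The remaining freedom is the isotropy subgroup $\Isotropies\subset\Aut(\HomHyperquadric{2}{3})$ fixing the image point, acting on the transversal maps based at $p$. Expanding $L$ (equivalently its CR-lift) as a power series and using $\Isotropies$ to normalize its $1$- and $2$-jet at $p$ isolates a finite-dimensional slice $\NTwo$ transverse to the $\Isotropies$-orbits; a genuine one-parameter family, say $G_s$, may persist inside $\NTwo$, parametrized by data (the point $p_0$ in the paper's notation) that $\Isotropies$ cannot absorb.

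\emph{Classifying $\NTwo$ --- the main obstacle.} The mapping identity $\rho'(L,\overline L)=a\,\rho$ on the source hyperquadric, with $\rho,\rho'$ defining functions and $a$ the multiplier, yields after polarization and restriction to the complexification a recursive system for the Taylor coefficients of $L$. Feeding in the normalization of the $2$-jet and grading the coefficients by weighted degree --- exactly the Baouendi--Ebenfelt--Rothschild/Lamel scheme --- the system should close after finitely many steps, since in codimension one the higher jets are forced by the low ones; solving it and following every branch of the ensuing case analysis should produce an explicit finite list of normalized maps together with the family $G_s$. This is the intermediate result the abstract flags as crucial, and the bookkeeping and branching are where the real work lies. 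The output should be the normal forms underlying the linear embedding $L_1$, the quadratic map $L_2$, and the rational maps $L_3,L_4,L_5,L_6$ of degrees $2$ and $3$, some of the latter realized as special members of $G_s$.

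\emph{Reassembling and inequivalence.} Because every map in $\FTwo$ can be brought by automorphisms of source and target to one based at $p$ with normalized jets, $\FTwo/{\sim}$ and $\NTwo/{\sim}$ coincide; it remains to quotient $\NTwo$ by the part of the equivalence not seen by $\Isotropies$ --- the source automorphisms fixing no point --- and one checks that $G_s$ is an orbit under these, so that it collapses to a single class. This is the point at which ``the transitive part of the automorphism group'' enters, and together with the first step it shows that every non-constant $L$ is equivalent to one of $L_1,\dots,L_7$. Inequivalence is then checked by invariants: the degree of $L$ as a rational map, the rank of $dL$, and the CR-data read off the normalized $2$-jet distinguish the seven maps, so no two are carried to one another by an element of $SU(2,1)\times SU(2,2)$.
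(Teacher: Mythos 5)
Your overall architecture coincides with the paper's (reduce to the class $\FTwo$ of $2$-nondegenerate transversal germs, normalize to $\NTwo$ by isotropies, classify $\NTwo$ by a jet parametrization, then act by the transitive part of the automorphism groups), but two of your steps, as stated, would yield the wrong answer precisely for the target $\Hyperquadric{-}{3}$. The decisive one is your claim that the surviving one-parameter family $G_s$ ``is an orbit under [the transitive automorphisms], so that it collapses to a single class.'' This is false in this signature: $\NTwo$ contains one isolated map and \emph{two} one-parameter families $\MapOneParameterFamilyThree{2}{s}{-}$ and $\MapOneParameterFamilyThree{3}{s}{-}$, and under the full equivalence (\autoref{theorem:ReductionFinite}) the family $\MapOneParameterFamilyThree{2}{s}{-}$ splits into \emph{three} orbits, $0\le s<\tfrac12$, $s=\tfrac12$, and $s>\tfrac12$, which account respectively for $L_3$, $L_4$ and $L_5$. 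Detecting this requires recentering each $\MapOneParameterFamilyThree{2}{s}{-}$ at a moving base point $p$, renormalizing, and computing the range of the resulting parameter function $s(p)$, whose image is pinned on one side of the critical value $\tfrac12$; the map with $s=\tfrac12$ is a fixed class with trivial isotropic stabilizer (\autoref{lemma:FixedMapping}). If the family really collapsed to one class, Lebl's list would have at most four entries instead of seven, so this is the crux of the $\eps=-1$ case and your proposal supplies no mechanism for it.

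Second, your initial trichotomy asserts that every non-constant map outside $\FTwo$ is equivalent to $(1,\ell,\ell)$, and you then expect the linear embedding $L_1$ to emerge from the classification of $\NTwo$. Both halves fail: $L_1$ has second component data with $\Delta(1,0;2,0)=0$, so $L_1\notin\FTwo$; it arises from the transversal but constantly $(1,1)$-degenerate branch (\autoref{proposition:FirstProperties}), which your reduction omits, while the branch with image inside the hyperquadric gives $L_7$. Finally, the invariants you propose for inequivalence (degree, rank of $dL$, data of a normalized $2$-jet at one point) do not separate the list: $L_3$ and $L_4$ are both immersive reduced rational maps of degree $2$, and a jet at a single point is not invariant under the equivalence, which moves the base point. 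The paper has to invoke the locus where $2$-nondegeneracy fails (to separate $L_2$ from $L_5$) and the isotropic stabilizer, a circle for $L_3$ versus trivial for $L_4$, to complete the argument.
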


In this article we give a direct proof of both results of Faran and Lebl based on a very different and independent approach. Our main result is the following theorem.

\begin{theorem}[label=theorem:MainTheorem]
Let $p \in \Sphere{2}$, $U \subset \C^2$ be an open and connected neighborhood of $p$ and $H: U \rightarrow \C^3$ a non-constant holomorphic mapping satisfying $H(U \cap \Sphere{2}) \subset \Hyperquadric{\eps}{3}$. Then $H$ is equivalent to exactly one of the following maps:
\begin{compactitem}
\item[\rm{(i)}] $\MapMainTheoremTwo{1}{\eps}(z,w) = (z,w,0)$
\item[\rm{(ii)}] $\MapMainTheoremTwo{2}{\eps}(z,w) = \Bigl(z^2,\frac{ (1- \eps + z (1+ \eps)) w}{\sqrt{2}},w^2\Bigr)$
\item[\rm{(iii)}] $\MapMainTheoremTwo{3}{\eps}(z,w) = \Bigl(z, \frac{(1-\eps+z^2(1+\eps)) w}{2 z}, \frac{ (1- \eps +z(1+\eps )) w^2}{2 z} \Bigr)$
\item[\rm{(iv)}] $\MapMainTheoremTwo{4}{\eps}(z,w) = \frac{\left(4 z^3, ( 3(1- \eps) +(1+3 \eps) w^2) w, \sqrt{3} (1 -\eps + 2 (1 +\eps) w + (1-\eps) w^2 ) z\right)}{1 + 3 \eps + 3 (1-\eps) w^2}$
\end{compactitem}
Additionally for $\eps = -1$ we have:
\begin{compactitem}
\item[\rm{(v)}] $H_5(z,w) = \Bigl(\frac{(2 + \sqrt{2} z)z}{1+\sqrt{2} z + w}, w,  \frac{(1+\sqrt{2} z - w)z}{1+\sqrt{2} z+w}\Bigr)$ 
\item[\rm{(vi)}] $H_6(z,w) =\frac{ \left((1-w) z, 1+ w - w^2,(1+ w) z \right)} {1 - w - w^2}$
\item[\rm{(vii)}] $H_7(z,w) = \bigl(1,h(z,w),h(z,w)\bigr)$ for some non-constant holomorphic function $h: \C^2 \rightarrow \C$
\end{compactitem}
Further, $\MapMainTheoremTwo{3}{-}$ is equivalent to $L_3$, $\MapMainTheoremTwo{4}{-}$ to $L_6$, $\MapMainTheoremOne{5}$ to $L_4$ and $\MapMainTheoremOne{6}$ to $L_5$.
\end{theorem}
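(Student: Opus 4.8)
The proof proceeds in four stages. \emph{Stage 1: isolating $\FTwo$.} I would first invoke the structure theory of Lamel for CR-maps of hyperquadrics to split the problem: a non-constant $H$ with $H(U\cap\Sphere{2})\subset\Hyperquadric{\eps}{3}$ is, on a dense open subset of $U\cap\Sphere{2}$, either CR-transversal and CR-nondegenerate — so its germ at a generic point lies in $\FTwo$ — or it is degenerate, and a structure theorem for degenerate maps applies. In the degenerate case one shows that, after an automorphism of the target, the image of the induced CR-map $\hat H$ lies in a lower-dimensional hyperquadric, forcing $H$ to be equivalent to the linear embedding $\MapMainTheoremTwo{1}{\eps}$; or else the image of $H$ is a germ of a complex curve, which, since $\Sphere{3}$ is strictly pseudoconvex and contains no such germ, can only occur when $\eps=-1$ and produces a map of the form $(1,h,h)$, i.e. $H_7$. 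This disposes of cases (i) and (vii) and reduces everything else to the classification of $\FTwo$ up to equivalence.

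\emph{Stage 2: normalization.} Given $H\in\FTwo$, pick a point $p$ at which $H$ is CR-transversal and CR-nondegenerate. Using the transitivity of $\Aut(\Sphere{2})$ and of $\Aut(\Hyperquadric{\eps}{3})$ I move $p$ and $H(p)$ to standard points, so that in the Heisenberg-type models the induced CR-map fixes the origin; then, following Baouendi--Ebenfelt--Rothschild, I use the isotropy group $\Isotropies$ of the origin (its Heisenberg dilations, rotations and the "shear'' part) to kill the normalizable low-order Taylor coefficients of $H$ at $p$. The resulting representative lies in $\NTwo$, whose complete description is the intermediate goal.

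\emph{Stage 3: solving the mapping equations on $\NTwo$.} The condition $H(U\cap\Sphere{2})\subset\Hyperquadric{\eps}{3}$ is equivalent to a single real-analytic identity relating $H$ and $\bar H$ along the Segre varieties; polarizing and differentiating repeatedly turns it, by CR-nondegeneracy, into a determined system for the components of $H$ together with the associated reflection (Gauss) map, in which the normalized $2$-jet at the origin enters as a finite set of parameters constrained by Stage 2. Integrating this system — the computational core — yields an explicit, finite list of normalized maps; the case distinctions are governed by the rank of the linear part and by which normalized second-order coefficients vanish, and the bookkeeping shows the list reduces to $\MapMainTheoremTwo{2}{\eps}$, $\MapMainTheoremTwo{3}{\eps}$, $\MapMainTheoremTwo{4}{\eps}$ and, for $\eps=-1$ only, the additional maps $\MapMainTheoremOne{5}$ and $\MapMainTheoremOne{6}$.

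\emph{Stage 4: from $\NTwo$ to $\FTwo$, and rigidity.} Since $\Isotropies$ is not the full automorphism group, two maps landing in $\NTwo$ may still be equivalent, and normalizing the same map at different transversal base points can yield different elements of $\NTwo$; running through the residual $\Isotropies$-ambiguity together with the transitive part of $\Aut(\Sphere{2})\times\Aut(\Hyperquadric{\eps}{3})$ collapses the $\NTwo$-list precisely onto the equivalence classes (ii)--(vi). To finish I verify pairwise inequivalence of (i)--(vii) using equivalence invariants (the degree of the rational extension, the geometric rank, and the isotropy subgroup of $\hat H$ at a fixed point), and I exhibit the explicit matrices $U\in SU(2,1)$, $U'\in SU(2,2)$ realizing $\MapMainTheoremTwo{3}{-}\sim L_3$, $\MapMainTheoremTwo{4}{-}\sim L_6$, $\MapMainTheoremOne{5}\sim L_4$, $\MapMainTheoremOne{6}\sim L_5$ (and the analogous identifications of $\MapMainTheoremTwo{j}{+}$ with Faran's $F_j$) by direct substitution. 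I expect the main obstacle to be Stage 3 together with the reconciliation in Stage 4: organizing the integration of the mapping equations so the case analysis terminates with exactly the stated maps, and — particularly for the Lorentzian target — showing that passing from the pointwise-normalized family $\NTwo$ to the global classification overlooks no equivalence class, which is exactly where the extra maps $\MapMainTheoremOne{5}$ and $\MapMainTheoremOne{6}$ must be produced.
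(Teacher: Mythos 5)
Your overall architecture (isolate the transversal $2$-nondegenerate class $\FTwo$, normalize by isotropies, solve the mapping equation along Segre sets, then bring in the transitive part of the automorphism groups and finish with invariants) is the same as the paper's, and Stages 1, 2 and the inequivalence portion of Stage 4 match what is actually done. The genuine gap is in Stage 3: you assert that integrating the mapping equation on the isotropy-normalized class yields ``an explicit, finite list of normalized maps,'' with $H_5$ and $H_6$ already present in that list for $\eps=-1$. This is false, and it is precisely where the difficulty lies. The isotropy group is too small to normalize away all continuous moduli: the set $\NTwo$ consists of one isolated map together with \emph{two one-parameter families} $\MapOneParameterFamilyThree{2}{s}{\eps}$ and $\MapOneParameterFamilyThree{3}{s}{\eps}$, parametrized by the surviving coefficient $s=2f_{1w^2}(0)\geq 0$, and no two distinct members of $\NTwo$ are isotropically equivalent. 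Finiteness is achieved only afterwards, by translating the base point $p_0$ along $\HeisenbergOne{2}$, renormalizing, and computing how $s$ transforms; one obtains explicit functions $p_0\mapsto s_k^{\eps}(p_0)$ whose ranges determine which parameter values lie in a common transitive orbit.

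Consequently, the extra classes $H_5$ and $H_6$ for $\eps=-1$ are not additional solutions of the mapping equation: they appear because for the Lorentzian target the renormalization functions only sweep out $0\le s<1/2$ (giving $\MapMainTheoremTwo{3}{-}$) and $s>1/2$ (giving $H_6$), leaving $s=1/2$ as an isolated orbit (giving $H_5$), whereas for $\eps=+1$ the whole family is a single orbit. Your closing sentence gestures at this, but it contradicts your Stage 3, and without identifying $s$ as the residual modulus and computing its transformation law under change of base point the case analysis cannot terminate with exactly the stated list. A secondary caution on Stage 4: degree and the degeneracy locus do not separate $\MapMainTheoremTwo{3}{-}$ from $H_5$ (both are of degree $2$ and $2$-nondegenerate everywhere in their domains); one is forced to compare isotropic stabilizers, which you do include among your invariants, so that part is workable in principle.
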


We would like to point out that one advantage of our chosen method is, that we prove Faran's and Lebl's result in a unified manner, i.e., we treat mapping from $\Sphere{2} \rightarrow \Sphere{3}$ and $\Sphere{2} \rightarrow \Hyperquadric{-}{3}$ in the same way and use the same techniques for both situations.

Let us provide some details of our proof. First we reformulate the problem and study holomorphic mappings from $\HeisenbergOne{2}$ to $\HeisenbergTwo{\eps}{3}$, which are biholomorphic images of $\Sphere{2}$ and $\Hyperquadric{\eps}{3}$ except one point and are given by
\begin{align*}
\HeisenbergOne{2} =  \bigl\{(z,w) \in \C^2:~\im w = | z|^2 \bigr\}, \qquad \quad \HeisenbergTwo{\eps}{3} = \bigl\{(z_1',z_2', w') \in \C^3:~\im w' = |z_1'|^2 + \eps |z_2'|^2 \bigr\},
\end{align*}
respectively and write $\HeisenbergOne{3} = \HeisenbergTwo{+}{3}$. Further we set $\langle z, w\rangle_{\eps} \coloneqq z_1 w_1 + \eps z_2  w_2$ and $|z|^2_{\eps} \coloneqq \langle z,\bar z\rangle_{\eps}$.
We introduce the class $\FTwo$ consisting of germs of $2$-nondegenerate transversal mappings, defined below in \autoref{def:F2}. They form the most interesting class of mappings to study. In the first part of the proof we consider the action of isotropies, i.e., automorphisms fixing a given point, on $\FTwo$ to provide a normal form $\NTwo$ for $\FTwo$. In a next step we give a classification of the mappings in $\NTwo$. More precisely we have the following theorem.

\begin{theorem}[label=theorem:ReductionOneParameterFamilies]
The set $\NTwo$ consists of explicitly given, rational mappings denoted by $\MapOneParameterFamilyTwo{1}{\eps}(z,w)$, $\MapOneParameterFamilyThree{2}{s}{\eps}(z,w)$ and $\MapOneParameterFamilyThree{3}{s}{\eps}(z,w)$, where $s\geq 0$. The first two maps are of degree $2$, the last one is of at most degree $3$. Each map in $\NTwo$ is not equivalent to any different map of $\NTwo$ with respect to automorphisms fixing $0$.
\end{theorem}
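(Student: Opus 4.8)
The plan is to turn the mapping condition into a single functional identity and to solve it under the normalization that defines $\NTwo$. Writing a map $H = (f_1,f_2,h)$ from $\HeisenbergOne{2}$ to $\HeisenbergTwo{\eps}{3}$, the condition $H(\HeisenbergOne{2})\subset\HeisenbergTwo{\eps}{3}$ says $\im h = |f_1|^2 + \eps|f_2|^2$ along $\im w = |z|^2$; complexifying the source (replacing the conjugate variables by independent ones $\xi,\chi$ tied by $w = \chi + 2\imu z\xi$) this becomes the polynomial identity
\begin{equation*}
\frac{1}{2\imu}\bigl(h(z,\chi+2\imu z\xi) - \bar h(\xi,\chi)\bigr) = f_1(z,\chi+2\imu z\xi)\,\bar f_1(\xi,\chi) + \eps\, f_2(z,\chi+2\imu z\xi)\,\bar f_2(\xi,\chi),
\end{equation*}
valid for all $(z,\xi,\chi)$ near $0$, where $\bar f_j,\bar h$ denote the maps obtained by conjugating the Taylor coefficients. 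Since every element of $\FTwo$ is transversal and $2$-nondegenerate, the finite jet determination results of Lamel and of Baouendi--Ebenfelt--Rothschild apply, so $H$ is determined by a jet at $0$ of some fixed finite order and it suffices to solve the identity above modulo that order in $(z,\xi)$. The normalization carving $\NTwo$ out of $\FTwo$ (obtained in the first part of the paper from the action of $\Isotropies$) pins down the $1$-jet of $H$ at $0$ together with a controlled part of its higher jets, leaving only finitely many Taylor coefficients free, one of which will survive as the real parameter $s$.

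Next I would expand both sides of the identity in powers of $z$ and $\xi$ with coefficients regarded as power series in $w$, and match them recursively. Transversality fixes the leading part of $h$ after a normalization such as $h_w(0)=1$; $2$-nondegeneracy imposes a rank condition on the second-order part of $(f_1,f_2)$, which is used to rule out the degenerate branches; and the remaining low-order matching yields a finite algebraic system in the free coefficients. Solving it produces a small number of branches, and because $2$-nondegeneracy bounds the essential jet order, each branch closes up after finitely many steps: solving the ensuing recursion — equivalently, making a rational ansatz of bounded degree — gives a rational map of degree $2$ in two of the branches, accounting for $\MapOneParameterFamilyTwo{1}{\eps}$ and $\MapOneParameterFamilyThree{2}{s}{\eps}$, and of degree at most $3$ in the remaining branch, accounting for $\MapOneParameterFamilyThree{3}{s}{\eps}$. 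The freedom not yet consumed by the normalization — essentially a one-parameter group of rescalings and rotations of the target coordinates commuting with the normal form — is then used to reduce the last free parameter to its modulus, which is what forces $s\geq 0$ rather than a signed or complex parameter. The two signs $\eps=\pm1$ must be carried through with care, since the reality conditions on the square roots appearing in the solutions depend on $\eps$; I expect this branch bookkeeping — arranging the case distinctions so that no solution is lost or double-counted, and pushing the normalization through each branch down to the clean parametrization by $s\geq 0$ — to be the main obstacle of the proof.

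Finally, to see that distinct elements of $\NTwo$ are inequivalent under automorphisms fixing $0$, I would show that the normal form is rigid. If $N_1,N_2\in\NTwo$ satisfy $N_2 = \sigma'\circ N_1\circ\sigma$ with $\sigma\in\Isotropies$ and $\sigma'$ an automorphism of $\HeisenbergTwo{\eps}{3}$ fixing $0$, then comparing jets at $0$ and using that both $N_1$ and $N_2$ already satisfy the normalization conditions forces $\sigma$ and $\sigma'$ into the small common stabilizer of the normal form; a direct computation within that stabilizer then gives $N_1 = N_2$. In particular the three families are pairwise disjoint as subsets of equivalence classes under $\Isotropies$ and the parameter $s$ is a genuine invariant of the isotropy action, which completes the proof.
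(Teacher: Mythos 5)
Your overall strategy — complexify the mapping equation, use $2$-nondegeneracy to reduce to finitely many Taylor coefficients, solve the resulting algebraic system by branches, and then prove rigidity of the normal form by forcing the isotropies into the stabilizer — is the same skeleton as the paper's, and your treatment of the inequivalence statement matches the paper's argument (compare low-order jets of $\sigma'\circ N_1\circ\sigma$ with those of $N_2$, pin down the standard parameters, conclude via jet determination). But there is a genuine gap at the heart of the classification step: you never justify why the coefficient-matching "recursion closes up after finitely many steps," nor why the solutions are rational of degree at most $3$. Finite jet determination (which you invoke from Lamel and Baouendi--Ebenfelt--Rothschild) tells you that a map in $\FTwo$ is determined by a fixed finite jet; it does \emph{not} tell you that every solution of the mapping equation truncated at that order extends to an actual map, nor that the map is rational, nor what its degree is. Your "rational ansatz of bounded degree" is therefore an unjustified assumption, and without it the claim that exactly three families $\MapOneParameterFamilyTwo{1}{\eps}$, $\MapOneParameterFamilyThree{2}{s}{\eps}$, $\MapOneParameterFamilyThree{3}{s}{\eps}$ arise is not established.

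The paper supplies the missing mechanism in two steps that your proposal has no substitute for. First, a jet parametrization along the second Segre set (\autoref{lemma:BasicIdentity2Nondeg}): applying $L^0,L^1,L^2$ to the complexified mapping equation and using $2$-nondegeneracy to invert the resulting $3\times 3$ system yields an \emph{explicit rational} formula $H(z,2\imu z\chi)=\Psi(z,\chi,j)$ in terms of a specific finite list $j$ of Taylor coefficients — this is what makes "finitely many free coefficients" and "rational of bounded degree" theorems rather than hopes. Second, a desingularization step (\autoref{lemma:Desingularization}): substituting $\chi=w/(2\imu z)$ must produce a map holomorphic at $0$, and the vanishing of the coefficients $\Psi_{k\ell}(j)$ for $\ell>k$ is the actual source of the algebraic system on $j$ and of the branch structure (a substantial, partly computer-assisted case tree), after which the mapping equation is imposed once more to cut the surviving parameters down to the single real $s$. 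Also, a small correction: $s\geq 0$ is not extracted a posteriori by a residual rotation; it is already built into $\NTwo$ via the normalization condition $\widehat f_{1w^2}(0)=|\widehat f_{1w^2}(0)|\geq 0$, and $s=2f_{1w^2}(0)$.
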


This theorem, given in full details in \autoref{theorem:ReductionOneParameterFamilies2} below, still gives infinitely many mappings under equivalence with respect to isotropies, but reduces the problem to a study of one-parameter families of rational mappings. The second part of the proof consists of studying the action of transitive automorphisms on mappings in $\NTwo$. We choose certain values for $s$ and define the following mappings:
\begin{align}
\label{eq:FinalMaps}
 \MapParticularValueTwo{1}{\eps} (z,w)  & \coloneqq \MapOneParameterFamilyThree{2}{0}{\eps}(z,w), &  \MapParticularValueTwo{2}{\eps}(z,w) & \coloneqq \MapOneParameterFamilyThree{2}{1/2}{\eps}(z,w), &   \MapParticularValueTwo{3}{\eps}(z,w) & \coloneqq \MapOneParameterFamilyThree{2}{1}{\eps}(z,w),\\
\nonumber
& &  \MapParticularValueTwo{4}{\eps}(z,w)&  \coloneqq \MapOneParameterFamilyThree{3}{0}{\eps}(z,w).
\end{align}

Under the equivalence with respect to transitive automorphisms and isotropies we reduce the quotient space of $\FTwo$ under automorphisms to a finite set of classes of mappings. More details on the equivalence relation we use can be found in \autoref{def:transitiveEquiv} below. We obtain the following theorem.

\begin{theorem}[label=theorem:ReductionFinite]
For $m=2,3$ and $1 \leq k \leq 4$ let $\MapOneParameterFamilyThree{m}{s}{\eps}$ be as in \Autoref{theorem:ReductionOneParameterFamilies} and $\MapParticularValueTwo{k}{\eps}$ as in \eqref{eq:FinalMaps}. \\
For $\eps=+1$ we have:
\begin{compactitem}
\item[\rm{(i)}] For every $s \geq 0$ the mapping $\MapOneParameterFamilyThree{2}{s}{+}$ is equivalent to $\MapParticularValueTwo{1}{+}$.
\item[\rm{(ii)}] For every $s \geq 0$ the mapping $\MapOneParameterFamilyThree{3}{s}{+}$ is equivalent to $\MapParticularValueTwo{4}{+}$.
\end{compactitem}
For $\eps=-1$ we have:
\begin{compactitem}
\item[\rm{(iii)}] For every $0\leq s < \frac 1 2$ the mapping $\MapOneParameterFamilyThree{2}{s}{-}$ is equivalent to $\MapParticularValueTwo{1}{-}$.
\item[\rm{(iv)}] For every $s > \frac 1 2$ the mapping $\MapOneParameterFamilyThree{2}{s}{-}$ is equivalent to $\MapParticularValueTwo{3}{-}$.
\item[\rm{(v)}] For every $0\leq s \neq \frac 1 2$ the mapping $\MapOneParameterFamilyThree{3}{s}{-}$ is equivalent to $\MapParticularValueTwo{4}{-}$ and $\MapOneParameterFamilyThree{3}{1/2}{-}= \MapParticularValueTwo{2}{-}$. 
\end{compactitem}
\end{theorem}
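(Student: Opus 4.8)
The plan is as follows. By \autoref{theorem:ReductionOneParameterFamilies}, normalization with respect to the isotropy group $\Isotropies$ sends every germ in $\FTwo$ to a unique element of $\NTwo$; consequently two maps $G,G'\in\NTwo$ are equivalent in the sense of \autoref{def:transitiveEquiv} exactly when $G'$ is obtained from $G$ by pre-composition with a Heisenberg translation $T_p$ of $\HeisenbergOne{2}$, post-composition with a Heisenberg translation $T'_q$ of $\HeisenbergTwo{\eps}{3}$, and a subsequent isotropy-normalization; since the composition must again be a germ at $0$, the target translation is forced, up to an isotropy, to be the one bringing $G(p)$ to the origin, and pre-composing by a source isotropy leaves the normalization unchanged. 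Writing $\nu$ for isotropy-normalization into $\NTwo$ and (abusively) $T'_{-G(p)}$ for that forced target translation, the equivalence class of $G$ inside $\NTwo$ is therefore
\[
\bigl\{\,\nu\!\bigl(T'_{-G(p)}\circ G\circ T_p\bigr)\ :\ p\in\HeisenbergOne{2}\,\bigr\},
\]
so it suffices to compute this set for the representatives $\MapParticularValueTwo{k}{\eps}$ and to locate it inside the one-parameter families of \autoref{theorem:ReductionOneParameterFamilies}.

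Concretely, for $p=(a,\alpha)\in\HeisenbergOne{2}$ with $a\in\C$, $\alpha\in\R$ the parameters of the source translation, I would form $\MapTranslationParticularValueTwo{k}{\eps}\coloneqq T'_{-\MapParticularValueTwo{k}{\eps}(p)}\circ\MapParticularValueTwo{k}{\eps}\circ T_p$, which again fixes $0$ and maps $\HeisenbergOne{2}$ into $\HeisenbergTwo{\eps}{3}$ but is no longer in normal form. Since $\MapParticularValueTwo{k}{\eps}$ is rational of degree at most $3$, only a finite jet of $\MapTranslationParticularValueTwo{k}{\eps}$ enters the normalization; solving for the isotropy that restores the normal-form conditions, exactly as in the proof of \autoref{theorem:ReductionOneParameterFamilies}, yields $\MapRenormalizationTranslationParticularValueTwo{k}{\eps}\in\NTwo$, which by the uniqueness part of that theorem equals one of $\MapOneParameterFamilyTwo{1}{\eps}$, $\MapOneParameterFamilyThree{2}{s'}{\eps}$, $\MapOneParameterFamilyThree{3}{s'}{\eps}$ for an explicit $s'=s'(a,\alpha)\ge 0$. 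The statement then reduces to the range of $s'$: for $\MapParticularValueTwo{1}{+}=\MapOneParameterFamilyThree{2}{0}{+}$ and $\MapParticularValueTwo{4}{+}=\MapOneParameterFamilyThree{3}{0}{+}$ one checks that $(a,\alpha)\mapsto s'$ is onto $[0,\infty)$, which gives (i) and (ii); for $\eps=-1$ one checks it is onto $[0,\tfrac12)$ starting from $\MapParticularValueTwo{1}{-}$, onto $(\tfrac12,\infty)$ starting from $\MapParticularValueTwo{3}{-}=\MapOneParameterFamilyThree{2}{1}{-}$, and onto $[0,\infty)\setminus\{\tfrac12\}$ starting from $\MapParticularValueTwo{4}{-}=\MapOneParameterFamilyThree{3}{0}{-}$, which gives (iii), (iv) and the first assertion of (v); the remaining identity $\MapOneParameterFamilyThree{3}{1/2}{-}=\MapParticularValueTwo{2}{-}$ is a direct substitution.

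That the range of $s'$ is exactly the one claimed, rather than something larger, is explained by automorphism invariants. The families $\MapOneParameterFamilyTwo{1}{\eps}$, $\MapOneParameterFamilyThree{2}{s}{\eps}$, $\MapOneParameterFamilyThree{3}{s}{\eps}$ are separated from one another by structural quantities (for instance the degree, or the rank of a suitable finite jet), so a renormalized translate of $\MapParticularValueTwo{k}{\eps}$ stays in the family of $\MapParticularValueTwo{k}{\eps}$ — except at $s=\tfrac12$, $\eps=-1$, where $\MapOneParameterFamilyThree{2}{1/2}{-}=\MapOneParameterFamilyThree{3}{1/2}{-}=\MapParticularValueTwo{2}{-}$ and the two families meet in a smaller stratum. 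Within $\MapOneParameterFamilyThree{2}{s}{-}$ there is moreover a sign invariant, built from the low-order jet together with the indefinite target form $\langle\,\cdot\,,\cdot\,\rangle_{-}$, whose value has the sign of $\tfrac12-s$ and therefore separates $[0,\tfrac12)$ from $(\tfrac12,\infty)$; the family $\MapOneParameterFamilyThree{3}{s}{-}$ carries no such further invariant, so away from $s=\tfrac12$ it is a single orbit. When $\eps=+1$ the target form is definite, no sign invariant survives, and each half-line $[0,\infty)$ collapses to one class. In the write-up I would first record these invariants and their behaviour, then carry out the two orbit computations.

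The main labour is the explicit renormalization in the second step: composing the rational maps $\MapParticularValueTwo{k}{\eps}$ with Heisenberg translations and feeding the resulting jets through the normal-form algorithm is lengthy but mechanical, and one must keep track of the discrete stabilizer of the normal form so that the identification $\MapRenormalizationTranslationParticularValueTwo{k}{\eps}=\MapOneParameterFamilyThree{m}{s'}{\eps}$ is unambiguous. The genuinely delicate point is the case $\eps=-1$, where one must show that the claimed ranges of $s'$ are not only attained but complete — that the invariants above are the only obstruction — so that $[0,\tfrac12)$, $\{\tfrac12\}$, $(\tfrac12,\infty)$ are exactly the orbits in the $\MapOneParameterFamilyThree{2}{s}{-}$ family and $[0,\infty)\setminus\{\tfrac12\}$ together with $\{\tfrac12\}$ exactly the orbits in the $\MapOneParameterFamilyThree{3}{s}{-}$ family; the definite case $\eps=+1$ is comparatively routine once the set-up is in place.
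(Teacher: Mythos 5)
Your proposal follows the paper's own route: translate each representative by a point $p\in\HeisenbergOne{2}$, renormalize into $\NTwo$ with the isotropies of \autoref{proposition:NormalForm2Nondeg}, identify the result with some $\MapOneParameterFamilyThree{m}{s'}{\eps}$ via the jet determination of \autoref{cor:jetDetermination} (so only the coefficients $f_{w^2}(0)$ and $f_{z^2w}(0)$ need comparing), and read off the range of the explicit function $p\mapsto s'(p)$. Two remarks: the theorem only asserts equivalences on the stated parameter ranges, so attainment (surjectivity of $s'$) suffices and the ``completeness''/sign-invariant discussion you flag as the delicate point is not needed here --- the separation of orbits is done later, in the proof of \autoref{theorem:MainTheorem}, via degeneracy loci, degree and isotropic stabilizers; and you should note that for some $p$ the translate leaves $\FTwo$ (failing $2$-nondegeneracy or transversality at the new base point), a phenomenon the paper does not merely avoid but exploits to manufacture the simple representatives $\SimpleMapParticularValueTwo{k}{\eps}$ that make the computation tractable.
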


From our chosen approach and our careful study of the action of automorphisms on mappings we obtain in some sense computational effectiveness. More precisely our technique allows us to give explicit formulas for the automorphisms which bring an arbitrary mapping to one of the mappings listed in \autoref{theorem:MainTheorem}. Thus we think we provide a new proof of Faran's and Lebl's results which is, in some sense, easier to verify and more elementary. Nevertheless our proof is long, technical and features some nontrivial computations partly carried out with \textit{Mathematica 7.0.1.0} \cite{wolfram}.

The very last part of the proof of \autoref{theorem:MainTheorem} shows that the quotient space of $\FTwo$ under automorphisms indeed consists of the classes of mappings from the previous theorem. More precisely we provide a list of biholomorphic invariants associated to each mapping of \autoref{theorem:ReductionFinite} to show that the maps listed in \autoref{theorem:MainTheorem} are not equivalent to each other.

We organize this work as follows:
In \Autoref{sec:Preliminaries} we give most of the relevant definitions and introduce all biholomorphic invariants we use in order to obtain a class $\FTwo$. For this class of mappings, we compute a normal form in \Autoref{sec:NormalForm} and obtain $\NTwo \subset \FTwo$, the set of normalized mappings with respect to the stability groups. For $\NTwo$ we compute a jet parametrization in \Autoref{sec:MappingsInN} and after a so-called desingularization it turns out that $\NTwo$ consists of one separated mapping and two one-parameter families of mappings. In \Autoref{sec:global} we use the transitive automorphisms to show that $\FTwo$ consists of finitely many orbits of maps. 
Finally we complete the proof of \autoref{theorem:MainTheorem} in \Autoref{sec:ClassificationFinale}. 
 This article is partly based on the author's thesis \cite{reiter} at the University of Vienna.

\section{Preliminaries}
\label{sec:Preliminaries}

\begin{definition}
We fix coordinates $(z,w)=(z_1,\ldots, z_n,w) \in \C^{n+1}$.
\begin{compactitem} 
\item[\rm{(i)}] Let $h: \C^{n+1} \rightarrow \C$ be a holomorphic function given by $h(z,w) = \sum_{\alpha, \beta} a_{\alpha \beta} z^{\alpha} w^{\beta}$ defined near $0$. We write $\bar{h}(\bar z,\bar w) \coloneqq \overline{h(z,w)}=\sum_{\alpha, \beta} \bar{a}_{\alpha \beta} \bar z^{\alpha} \bar w^{\beta}$  for the complex conjugate of $h$.
For derivatives of $h$ with respect to $z$ or $w$ we write $h_{z^{\alpha} w^{\beta}}(0)\coloneqq  \frac{\partial^{|\alpha|+ |\beta|} h}{z^{\alpha} w^{\beta}}(0)$. For $n\geq 1$ and a  mapping $H: \C^{n+1} \rightarrow \C^{n'+1}$ defined near $0$ with components $H= \bigl(f_1,\ldots,f_{n'},g \bigr)$ we write $H_{z^{\alpha} w^{\beta}}(0) = \bigl (f_{1 z^{\alpha} w^{\beta}}(0),\ldots, f_{n' z^{\alpha} w^{\beta}}(0),g_{z^{\alpha} w^{\beta}}(0) \bigr)$.
\item[\rm{(ii)}] For $H=(f_1,\ldots,f_{n'},g)$ a holomorphic mapping of $\C^{n+1}$ to $\C^{n'+1}$ near $0$ we denote
\begin{align*}
\Delta(\alpha_1,\beta_1; \ldots;\alpha_{n'},\beta_{n'}) \coloneqq 
\left| \begin{array}{c c c}
f_{1 z^{\alpha_1} w^{\beta_1}} (0) &\cdots & f_{1 z^{\alpha_{n'}} w^{\beta_{n'}}} (0)  \\
\vdots & & \vdots \\
f_{n' z^{\alpha_1} w^{\beta_1}} (0) &  \cdots & f_{n' z^{\alpha_{n'}} w^{\beta_{n'}}} (0)
\end{array} \right|.
\end{align*}
\item[\rm{(iii)}] Let $H:\C^{n+1} \rightarrow \C^{n'+1}$ be a mapping defined at $p \in \C^{n+1}$ and $\alpha \in \N^{n+1}$. We denote by $j^k_pH$ the \textit{$k$-jet of $H$ at $p$} defined as
\begin{align*}
j_p^k H \coloneqq \left( \frac{\partial^{|\alpha|} H} {\partial Z^{\alpha} } (p): |\alpha| \leq k\right). 
\end{align*}
We denote by $J^k_p$ the collection of all $k$-jets at $p$. We write $J_p^k(M,p;M',p')$ for the collection of all $k$-jets at $p$ of mappings, which send $(M,p) \subset (\C^N,p)$ to $(M',p')\subset (\C^{N'},p')$.
\item[\rm{(iv)}]  For a rational, holomorphic mapping $H: \C^N \rightarrow \C^{N'}$ given by $H= (P_1,\ldots,P_{N'})/Q$, where $P_1,\ldots,P_{N'}$ and $Q$ are polynomial and complex-valued we say $H$ is \textit{reduced} if $P_1,\ldots,P_{N'}$ and $Q$ do not possess any common factor. Then the degree $\deg H$ of a reduced rational map $H$ is defined as $\deg H \coloneqq \max \bigl( (\deg P_k)_{k=1,\ldots, N'},\deg Q \bigr)$.
\end{compactitem} 
\end{definition}

\subsection{Automorphisms and Isotropic Equivalence}
\label{subsec:Automorphisms}

\begin{definition}
We denote the collection of locally real-analytic CR-diffeomorphisms of $(M,0)$ by $\Aut(M,0) \coloneqq \{H: (\C^{n+1},0) \rightarrow \C^{n+1}: H \text{ holomorphic}, H(M) \subset M, \det(H'(0)) \neq 0 \}$ and the group of \textit{isotropies} or \textit{stability group} of $(M,0)$ by $\Auto(M,0) \coloneqq \{H \in \Aut(M,0): H(0)=0\}$.
\end{definition}

\begin{definition}[label=def:Autom]
\begin{compactitem} 
\item[\rm{(i)}] We write $\R^+\coloneqq\{x \in \R:~x>0\}$, denote the unit sphere in $\C$ by $\UnitSphere \coloneqq \{e^{\imu t}: 0 \leq t < 2 \pi \}$ and set $\Gamma \coloneqq \R^+ \times \R \times \UnitSphere \times \C$. Then we parametrize $\Auto(\HeisenbergOne{2},0)$ via $\Gamma$ and write for $\gamma=(\lambda, r,u,c) \in \Gamma$:
\begin{align}
\label{eq:sigma}
\sigma_{\gamma}(z,w) \coloneqq \frac{(\lambda u (z + c w), \lambda^2 w)}{1 - 2 \imu \bar c z + (r - \imu |c|^2) w}.
\end{align}
\item[\rm{(ii)}] For $p=(p_1,p_2) \in \HeisenbergOne{2}$ we introduce the following mappings which form the so-called \textit{translations of $\HeisenbergOne{2}$}:
\begin{align}
\label{eq:TranslationC2}
 t_p: \HeisenbergOne{2} \rightarrow \HeisenbergOne{2}, \quad t_p(z,w) & \coloneqq (z +p_1, w + p_2 + 2 \imu  \bar p_1 z).
\end{align}
\item[\rm{(iii)}]
We define $\mathcal S^2_{\eps,\sigma} \coloneqq \big\{a' =(a_1', a_2') \in \C^2:  |a_1'|^2 + \eps |a_2'|^2 = \sigma \big\}$ where $\sigma = \pm 1$ if $\eps = -1$ and $\sigma = +1$ if $\eps=+1$ and let
\begin{align}
\label{eq:UTwoOneOne}
U' \coloneqq \left(\begin{array}{cc}
u' a_1' & - \eps u' a_2'  \\
\bar a_2' & \bar a_1' \end{array}\right),  \qquad u' \in \UnitSphere, \quad a'=(a_1',a_2') \in \mathcal S^2_{\eps,\sigma}.
\end{align}
We set $\Gamma' \coloneqq \R^+ \times \R \times \UnitSphere \times \mathcal S^2_{\eps,\sigma} \times \C^2$ to parametrize $\Auto(\HeisenbergTwo{\eps}{3},0)$ via $\Gamma'$ and write for $\gamma' =(\lambda',r',u',a',c') \in \Gamma'$:
\begin{align}
\label{eq:tau}
\sigma'_{\gamma'}(z',w') \coloneqq \frac{(\lambda' U'~{^t(z' +c' w')}, \sigma {\lambda'}^2 w')}{1 - 2 \imu\langle \bar c',z'\rangle_{\eps} + \bigl(r' - \imu |c'|^2_{\eps} \bigr) w'}.
\end{align}
\item[\rm{(iv)}] For $p'=(p_1',p_2',p_3') \in \HeisenbergTwo{\eps}{3}$ we define the following mapping, which we call a \textit{translation of $\HeisenbergTwo{\eps}{3}$}:
\begin{align}
 \label{eq:TranslationC3}
 t'_{p'}:  \HeisenbergTwo{\eps}{3} \rightarrow \HeisenbergTwo{\eps}{3}, \quad t'_{p'}(z',w') & \coloneqq  \bigl(z_1' - p_1',z_2'-p_2', w' - {\bar p_3}' - 2 \imu (\bar p_1' z_1' + \eps \bar p_2' z_2') \bigr).
\end{align}
\item[\rm{(v)}]  We write $\Isotropies \coloneqq \Auto(\HeisenbergOne{2},0) \times \Auto(\HeisenbergTwo{\eps}{3},0) $ for the direct product of the isotropy groups of $\HeisenbergOne{2}$ and $\HeisenbergTwo{\eps}{3}$.
\end{compactitem}
\end{definition}

\begin{remark}
If we set $\eps = -1$ and take $a_1'=0$ and $a_2' = u'=1$ in \eqref{eq:UTwoOneOne} we obtain the following automorphism $\pi'$ of $\HeisenbergTwo{-}{3}$:
\begin{align}
\label{eq:Pi}
\pi'(z_1',z_2',w') \coloneqq (z_2',z_1',-w').
\end{align}
If we do not mention otherwise we take $\sigma = +1$ in the definition of $\sigma'_{\gamma'}$ and use $\pi'$ separately.
\end{remark}

\begin{remark}[label=rem:FormOfAutomorphism]
Let us write $M$ for either $\HeisenbergOne{2}$ or $\HeisenbergTwo{\eps}{3}$. We note that since the automorphisms given in \eqref{eq:sigma}--\eqref{eq:TranslationC3} generate $\Aut(M,0)$, we immediately obtain that if we let $\phi \in \Aut(M,0)$, then there exists a unique translation $t$ and isotropy $\sigma$ of $(M,0)$ such that $\phi = t \circ \sigma$.
\end{remark}

\begin{definition}[label=definition:localEquivalence]
Let $G,H: (\HeisenbergOne{2},0) \rightarrow (\HeisenbergTwo{\eps}{3},0)$ be germs of holomorphic mappings. We let $(\gamma,\gamma') \in \Gamma \times \Gamma'$ to define $H_{\gamma,\gamma'}(z,w) \coloneqq \bigl(\sigma'_{\gamma'} \circ H \circ \sigma_{\gamma} \bigr)(z,w)$ and $O_0(H) \coloneqq \bigl\{H_{\gamma,\gamma'} : (\gamma,\gamma') \in \Gamma \times \Gamma'\bigr\}$, which we call the \textit{isotropic orbit} of $H$. We say $G$ is \textit{isotropically equivalent} to $H$ if $G \in O_0(H)$. We will refer to the elements of $\Gamma \times \Gamma'$ as \textit{standard parameters}. In the case where we take standard parameters $(\gamma,\gamma') \in \Gamma\times \Gamma'$ such that $(\sigma_{\gamma},\sigma'_{\gamma'}) = (\id_{\C^2}, \id_{\C^3})$, we say the standard parameters are \textit{trivial}.
\end{definition}

\subsection{Setup}
\label{sec:setup}

\begin{definition}
We define the following biholomorphism $T_N: \C^N \setminus \{z_1 =-1\} \rightarrow \C^N\setminus\{ z_N = -\imu\}$:
\begin{align} 
\label{eq:Cayley}
 T_N(z_1,\ldots,z_N) \coloneqq \Bigl(z_2, \ldots, z_N,\imu (1-z_1)\Bigr) / (1+z_1), 
\end{align}
with inverse
\begin{align}
\label{eq:CayleyInv}
T_N^{-1}(w_1,\ldots, w_N) = \Bigl(1+ \imu w_N, 2 w_1, \ldots, 2 w_{N-1}\Bigr)/(1- \imu w_N).
\end{align}
\end{definition}

Let $q \in \Hyperquadric{k}{N}$ and decompose $\C^{N} =  \C q \oplus q^{\perp}$, where $q^{\perp} \coloneqq \{v\in \C^N:  \langle q,\bar v \rangle_k = 0\}$ and $\langle . , . \rangle_k$ denotes the real bilinear form in $\C^N$ induced by the quadratic form defining $ \Hyperquadric{k}{N}$. Let $H$ be a mapping as in the assumption of \autoref{theorem:MainTheorem} with $H(p)=p' \in \Hyperquadric{\eps}{3}$. Then we decompose $\C^2$ and $\C^3$ with respect to $p$ and $p'$ as described above, such that $p=(1,0)$ and $H(1,0) = (1,0,0) \in \Hyperquadric{\eps}{3}$. We consider $\hat H\coloneqq T_{3} \circ H \circ T_{2}^{-1}$, where we possibly need to shrink $U$ to avoid the poles of $T_2^{-1}$ and $T_3$ respectively. Moreover in these coordinates $\hat H$ satisfies $\hat H(0)=0$ and $\hat H(U \cap \HeisenbergOne{2}) \subset \HeisenbergTwo{\eps}{3}$. Thus $\hat H = (f_1,f_2,g)$ has to satisfy the  \textit{mapping equation}:
\begin{align}
\label{eq:firstMapEq}
\im\bigl(g(z,w) \bigr) = |f_1(z,w)|^2 + \eps |f_2(z,w)|^2,
\end{align}
if $\im w = |z|^2$ for $(z,w) \in U$. In order to work with such an equation in a more convenient way we complexify \eqref{eq:firstMapEq} by setting $\chi \coloneqq \bar z$ and $\tau \coloneqq \bar w$ to obtain the \textit{complexified mapping equation}:
\begin{align}
\label{eq:MapEq}
g(z,\tau + 2 \imu z \chi) - \bar g(\chi,\tau)= 2\imu \Bigl(f_1(z,\tau + 2 \imu z \chi) \bar{f}_1(\chi,\tau) + \eps f_2(z,\tau + 2 \imu z \chi)  \bar{f}_2(\chi,\tau)\Bigr),
\end{align}
which holds for all $(z,\chi, \tau) \in \C^3$ sufficiently close to $0$. If we evaluate \eqref{eq:MapEq} at $(z,\chi, \tau)=(z,0,0)$ we obtain $g(z,0) = 0$. Moreover differentiating \eqref{eq:MapEq} with respect to $z$ and $\chi$ and evaluating the result at $0$ we have
\begin{align}
\label{eq:TransversalImmersive}
g_w(0) = | f_{1 z}(0)|^2 + \eps | f_{2 z}(0)|^2,
\end{align}
which implies $g_{w}(0) \in \R$.

\subsection{Biholomorphic Invariants of Mappings}

\subsubsection{Transversality of Mappings}
This section is devoted to introduce a well-known first-order biholomorphic invariant for mappings.

\begin{definition}[label=def:transversality]
Let $M\subset \C^N$ and $M' \subset \C^{N'}$ be real-analytic real hypersurfaces and $U\subset \C^N$ be a neighborhood of $p\in M$. A holomorphic mapping $H: \C^N \rightarrow \C^{N'}$ with $H(U \cap M) \subset M'$ is called \textit{transversal to $M'$ at $H(p)$} if
\begin{align}
\label{eq:transversality}
T_{H(p)} M' + d H (T_p\C^N)= T_{H(p)} \C^{N'}.
\end{align}
\end{definition}

\begin{proposition}[label=prop:transversality]
Let $U \subset \C^2$ be an open, connected neighborhood of $0$ and $H: U \rightarrow \C^3$ a non-constant holomorphic mapping with components $H=(f_1,f_2,g)$ satisfying $H(0)=0$ and $H(U \cap \HeisenbergOne{2}) \subset \HeisenbergTwo{\eps}{3}$. Then we have the following two mutually exclusive statements:
\begin{compactitem}
\item[\rm{(i)}] $H$ is transversal to $\HeisenbergTwo{\eps}{3}$ outside a proper, real-analytic subset $X$ of $U \cap \HeisenbergOne{2}$. If $H$ is transversal to $\HeisenbergTwo{\eps}{3}$ at $0$ we can assume $g_w(0) \neq 0$.
\item[\rm{(ii)}] The mapping satisfies $H(U) \subset \HeisenbergTwo{\eps}{3}$.
\end{compactitem}
Furthermore (ii) can only happen if $\eps = -1$.
\end{proposition}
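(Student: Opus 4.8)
The plan is to repackage transversality of $H$ along $\HeisenbergOne{2}$ as the non-vanishing of a single real-analytic function, run the obvious dichotomy on it, and treat the degenerate branch with the Levi forms.

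Write $\rho'\coloneqq\im w'-|z_1'|^2-\eps|z_2'|^2$ for a defining function of $\HeisenbergTwo{\eps}{3}$ and $\rho\coloneqq\im w-|z|^2$ for one of $\HeisenbergOne{2}$. By hypothesis the real-analytic function $\psi\coloneqq\rho'\circ H$ vanishes on $U\cap\HeisenbergOne{2}$, so after shrinking $U$ (keeping it and $U\cap\HeisenbergOne{2}$ connected) we may write $\psi=\rho\,\phi$ with $\phi$ real-analytic. Since $T_q\HeisenbergOne{2}=\ker d\rho|_q$ has real codimension one, \eqref{eq:transversality} at $q\in\HeisenbergOne{2}$ is equivalent to $d\psi|_q\neq 0$ on $T_q\C^2$, and $d\psi|_q=\phi(q)\,d\rho|_q$, so it is equivalent to $\phi(q)\neq 0$. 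At $q=0$, where $g(z,0)=0$ gives $g_z(0)=0$ and hence $d\psi|_0=g_w(0)\,d(\im w)$ with $g_w(0)\in\R$ by \eqref{eq:TransversalImmersive}, this reads $\phi(0)=g_w(0)$, which is the statement about $g_w(0)$ in (i). Now $X\coloneqq\{\phi=0\}\cap\HeisenbergOne{2}$ is a real-analytic subset of the connected real-analytic manifold $U\cap\HeisenbergOne{2}$: either $\phi|_{\HeisenbergOne{2}}\not\equiv 0$, so $X$ is a proper real-analytic subset and $H$ is transversal to $\HeisenbergTwo{\eps}{3}$ on $(U\cap\HeisenbergOne{2})\setminus X$, which is (i), or $\phi|_{\HeisenbergOne{2}}\equiv 0$, i.e.\ $\psi=\rho^2\phi_1$.

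The heart of the matter is the branch $\psi=\rho^2\phi_1$. Since $\im g$ is pluriharmonic, $i\partial\bar\partial\psi=-\bigl(i\partial f_1\wedge\bar\partial\bar f_1+\eps\,i\partial f_2\wedge\bar\partial\bar f_2\bigr)$ on $U$, while $i\partial\bar\partial(\rho^2\phi_1)$ restricted to $q\in\HeisenbergOne{2}$ (where $\rho=0$) equals $2\phi_1(q)\,i\partial\rho\wedge\bar\partial\rho|_q$, a multiple of a rank-one Hermitian form. I would compare these two Hermitian forms on $\HeisenbergOne{2}$. For $\eps=+1$ the left form is negative semidefinite, from which both $df_1(q)$ and $df_2(q)$ must be proportional to $\partial\rho|_q$, so $df_j\wedge\partial\rho\equiv 0$ on $\HeisenbergOne{2}$. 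For $\eps=-1$ the right form has rank $\leq 1$, which forces $df_1\wedge df_2=0$ on $\HeisenbergOne{2}$, hence (a holomorphic coefficient vanishing on a real hypersurface) $df_1\wedge df_2\equiv 0$ on $U$; writing $df_2=\lambda\,df_1$ on the dense open set $\{df_1\neq 0\}$ (one checks $f_1\not\equiv 0$, else an easier form of the same argument contradicts non-constancy), a further comparison gives $|\lambda(q)|=1$ at every $q\in\HeisenbergOne{2}$ where $df_1(q)$ is not proportional to $\partial\rho|_q$. Next I would pass to the complexification: the ``real points'' of $\{w=\tau+2\imu z\chi\}$ form a maximally totally real submanifold, so a holomorphic germ vanishing there vanishes identically. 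This turns ``$df_j\wedge\partial\rho\equiv 0$ on $\HeisenbergOne{2}$'' into ``$\partial_z[f_j(z,\tau+2\imu z\chi)]\equiv 0$'', whence $f_j$ depends on $w$ alone and then, by the same device, $f_j\equiv 0$; it likewise shows that $df_1\wedge\partial\rho\equiv 0$ on $\HeisenbergOne{2}$ would force $f_1\equiv 0$, so in the $\eps=-1$ branch that exceptional locus is not all of $\HeisenbergOne{2}$, $|\lambda|=1$ on a nonempty open subset of $\HeisenbergOne{2}$, hence $\lambda(z,w)\bar\lambda(\chi,\tau)=1$ there and (setting $\chi=0$) $\lambda$ is a unimodular constant $e^{\imu\theta}$ with $f_2=e^{\imu\theta}f_1$ (the additive constant vanishing since $f_1(0)=f_2(0)=0$). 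Therefore: for $\eps=+1$ we get $f_1\equiv f_2\equiv 0$, so $\psi=\im g$ vanishes on $\HeisenbergOne{2}$, which by the same complexification forces $g\equiv 0$ and $H$ constant — impossible; so for $\eps=+1$ the branch $\phi|_{\HeisenbergOne{2}}\equiv 0$ never occurs and we are always in (i). For $\eps=-1$ we get $|f_1|^2+\eps|f_2|^2\equiv 0$ on $U$, so $\psi=\im g$, $g\equiv 0$, and $\rho'\circ H\equiv 0$ on all of $U$, i.e.\ $H(U)\subset\HeisenbergTwo{-}{3}$, which is (ii).

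Finally, for the closing sentence: if $H(U)\subset\HeisenbergTwo{+}{3}$ then $\im g=|f_1|^2+|f_2|^2$ on the open set $U$, so $i\partial\bar\partial$ gives $i\partial f_1\wedge\bar\partial\bar f_1+i\partial f_2\wedge\bar\partial\bar f_2\equiv 0$, hence $f_1,f_2$ are constant $=0$ and then $g$ is constant, so $H$ is constant, contradicting the hypothesis; thus (ii) forces $\eps=-1$ (and conversely (ii) does occur there, e.g.\ $H=(\ell,\ell,0)$ with $\ell$ non-constant holomorphic). I expect the main obstacle to be the degenerate branch of the third paragraph — extracting enough structure from the second-order vanishing $\psi=\rho^2\phi_1$ via the Levi-form comparison, and converting the ensuing real-analytic identities on $\HeisenbergOne{2}$ into honest holomorphic identities by complexification; once that is done, the normalization $H(0)=0$ and the non-constancy of $H$ close everything, while the first-order reduction and the zero-set dichotomy are routine.
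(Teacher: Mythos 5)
Your proof is correct, and it takes a genuinely different route from the paper: the paper proves almost none of this statement itself, citing \cite[Theorem 1.1]{BER07} for the dichotomy between (i) and (ii) and \cite[Theorem 5.2]{ER} for the claim about $g_w(0)$, and only argues the final sentence directly (by setting $\chi=\tau=0$ in the complexified equation \eqref{eq:MapEq} to force $g\equiv 0$ and then $f_1\equiv f_2\equiv 0$ --- the polarized version of your closing $\imu\partial\bar\partial$ computation). What you supply instead is a self-contained proof of the relevant special case of the cited transversality theorem: the factorization $\rho'\circ H=\rho\,\phi$, the identification of transversality at $q$ with $\phi(q)\neq 0$ (which upgrades the paper's ``we can assume $g_w(0)\neq 0$'' to the honest equivalence $\phi(0)=g_w(0)$), and, in the degenerate branch, the comparison of $\imu\partial f_1\wedge\bar\partial\bar f_1+\eps\,\imu\partial f_2\wedge\bar\partial\bar f_2$ with the rank-one form $-2\phi_1\,\imu\partial\rho\wedge\bar\partial\rho$ on $\HeisenbergOne{2}$, whose signature analysis is exactly what forces $\eps=-1$ in case (ii). This buys independence from the two references at the cost of length. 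Two steps deserve slightly more care, though both are standard and fixable: (a) the passage from $|\lambda|=1$ on a nonempty relatively open subset $V\subset\HeisenbergOne{2}$ to $\lambda$ being a unimodular constant --- ``setting $\chi=0$'' is not directly available when the complexified identity is based at a point of $V$ away from the origin, but differentiating $\lambda(z,\tau+2\imu z\chi)\,\bar\lambda(\chi,\tau)=1$ in $z$ and varying $\chi$ with $(z,w)$ fixed yields $\lambda_z=\lambda_w=0$; and (b) the propagation of $\lambda\equiv e^{\imu\theta}$, hence of $f_2=e^{\imu\theta}f_1$, to all of $U$, which needs that $\{df_1\neq 0\}$ is the complement of a proper analytic subset of the connected set $U$ and is therefore connected and dense.
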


\begin{proof}
The statements in (i) and (ii) are proved in \cite[Theorem 1.1]{BER07} in more generality and the second statement in (i) is proved in \cite[Theorem 5.2]{ER}. Next we assume that (ii) holds for $\eps = +1$, such that
\begin{align*}
g(z,w) - \bar g(\chi,\tau) - 2 \imu \bigl( f_1(z,w) \bar f_1(\chi, \tau) + f_2(z,w) \bar f_2(\chi, \tau) \bigr)= 0,
\end{align*}
for all $(z,w,\chi,\tau) \in \C^4$ near $0$. Setting $\chi=\tau=0$ we obtain $g(z,w) = 0$, which immediately implies $(f_1,f_2) \equiv 0$ if we set $\chi=\bar z$ and $\tau = \bar w$, thus $H$ is constant, which we excluded.
\end{proof}

\begin{remark}[label=rem:TransversalityInvariance]
In view of \eqref{eq:transversality} it is easy to observe that transversality is invariant under biholomorphic changes of coordinates.
\end{remark}
\begin{remark}[label=rem:NonTransversality]
One can show that $H$ is transversal to $M'$ at $H(p)$ if and only if there exists a holomorphic function $A: (\C^{2 N+2},p) \rightarrow \C$ such that $\rho' \bigl(H(Z), \bar H(\zeta) \bigr) = A(Z,\zeta) \rho(Z,\zeta)$, for $\rho,\rho'$ defining functions for $M$ and $M'$ respectively and $A(p,\bar p) \neq 0$. $(Z,\zeta)\in \C^{2 N}$ denote coordinates for the complexification of $M$. The set $\{q \in M: A(q,\bar q) = 0 \}$ defines a proper, real-analytic subset of $M$ and hence we say \textit{$H$ is transversal to $M'$ outside a proper, real-analytic subset of $M$} if $H$ is transversal to $M'$ at $H(p)$ for some $p \in M$. Otherwise if $A(p,\bar p) = 0$ for $p \in U \cap M$ we have that $H$ is not transversal at $H(p)$.
\end{remark}
\begin{remark}[label=rem:Immersive]
\autoref{prop:transversality} (i) together with \eqref{eq:TransversalImmersive} shows that a transversal mapping $H$ from $\HeisenbergOne{2}$ to $\HeisenbergTwo{\eps}{3}$ is immersive.
\end{remark}

\subsubsection{Degeneracy of Mappings}
The next biholomorphic invariant we need is the well-known (finite) degeneracy for mappings. This invariant was used by among others Faran \cite{faran}, Cima--Suffridge \cite{cs83} and Forstneri\v{c} \cite{forstneric} to extend proper holomorphic mappings, which are smooth up to the boundary of their domain, holomorphically past the boundary. This section is based on \cite[Section 2.5]{lamel}.

\begin{definition}[label=def:degeneracy]
Let $M \subset \C^N$ and $M' \subset \C^{N'}$ be generic, real-analytic submanifolds of codimension $d$ and $d'$ respectively and denote $n \coloneqq N-d$ and $n' \coloneqq N'-d'$. For $p \in M, p' \in M'$ and $U \subset \C^N$ a neighborhood of $p$ we let $H: U \rightarrow \C^{N'}$ be a holomorphic mapping satisfying $H(U \cap M) \subset M'$. We choose coordinates $Z$ and $Z'$ centered at $p$ and $p'$ for $M$ and $M'$ respectively. In the complexification of $M$ and $M'$ we write $\zeta\coloneqq \bar Z$ and $\zeta' \coloneqq \bar Z'$. For $\rho'=(\rho'_1,\ldots, \rho'_{d'})$ a defining function for $M'$ near $p'$ we denote for $1 \leq j \leq d'$ the complex gradient $\rho'_{j,Z'}(Z',\bar Z')$ of $\rho'_j$ with respect to $Z'$ by defining $\rho'_{j,Z'}(Z',\zeta') \coloneqq \left( \frac{\partial \rho_j'(Z',\zeta')}{\partial z_1'}, \ldots, \frac{\partial \rho'(Z',\zeta')}{\partial z'_{N'}} \right)$. For $L_1,\ldots,L_{n}$ a basis of CR-vector fields for $M$ near $p$ and $\alpha =(\alpha_1, \ldots, \alpha_n)\in \N^n$ we denote $L^{\alpha} \coloneqq L_1^{\alpha_1} \cdots L_n^{\alpha_n}$. Then we define for $k\geq 0$ and $q\in M$ near $p$ the following vector spaces after possibly shrinking $U$:
\begin{align*}
E'_k(q) \coloneqq \Span_{\C} \left\{ L^{\alpha} \rho'_{j,Z'}\bigl(H(Z), \bar H(\zeta)\bigr) \Big\vert_{(Z,\zeta)=(q,\bar q)}:  0 \leq |\alpha| \leq k, 1\leq j\leq d' \bigr)\right\} \subset \C^{N'}.
\end{align*}
Since for $k \geq 0$ the $E'_k(q)$ form an ascending chain of vector spaces in $\C^{N'}$, there exists a minimal $k_0\geq 0$ such that $E'_k(p) = E'_{k_0}(p)$ for all $k \geq k_0$ and $E'_{k_0-1}(p) \subsetneq E'_{k_0}(p)$ for $p \in M$ in a neighborhood of $q \in M$. The number $s(q) \coloneqq N' - \dim_{\C} E'_{k_0}(q)$ is called the \textit{degeneracy} of $H$ at $q$ and $H$ is called $\bigl(k_0,s(q) \bigr)$\textit{-degenerate} at $q \in M$. If $s = s(p)$ is constant for $p \in M$ in a neighborhood of $q\in M$ we say $H$ is \textit{constantly $(k_0,s)$-degenerate} near $q\in M$ and $s$ is called \textit{constant degeneracy} of $H$. If for some $q \in M$ we have $s(q)=0$, then $E'_{k_0}(q) = \C^{N'}$ which means that $H$ is of constant degeneracy $s = 0$ near $q$ and $H$ is called $k_0$\textit{-nondegenerate}.
\end{definition}

\begin{remark}[label=rem:DegeneracyInvariance]
In \cite[Section 2.3]{lamel} it is shown that \autoref{def:degeneracy} is independent of the choices of a basis of CR-vector fields, the defining function and holomorphic coordinates in $\C^N$ and $\C^{N'}$.
\end{remark}

\begin{definition}[label=def:GenericDegeneneracy]
Let $M \subset \C^N$ and $M' \subset \C^{N'}$ be generic, real-analytic submanifolds and $U \subset \C^N$ be a neighborhood of $p\in M$. Let $H: U \rightarrow \C^{N'}$ be a holomorphic mapping satisfying $H(U \cap M) \subset M'$ and fix $V \subset U$ a neighborhood of $p \in M$ such that $\overline{V \cap M} \subset U$. The number $s_H(V) \coloneqq \min \{s(q): q \in \overline{V \cap M} \}$ is called \textit{generic degeneracy for $H$} in $V \subset \C^N$ a neighborhood of $p\in M$. 
\end{definition}
By \cite[Lemma 22]{lamel} it follows that $H$ is constantly $\bigl(k_0,s_H(V)\bigr)$-degenerate outside a proper, real-analytic subset of $V \cap M \subset U$ for some $k_0 \in \N$, hence if we take a smaller neighborhood $W \subset V$ in \autoref{def:GenericDegeneneracy} then $s_H(V) = s_H(W)$. We skip the argument in $s_H(V)$ and write $s_H$ from now on. Next we obtain bounds for the generic degeneracy $s_H$ and $k_0$ adapted to our setting.

\begin{proposition}[label=prop:BoundsDegeneracy]
Let $U \subset \C^2$ be a neighborhood of $p \in \HeisenbergOne{2}$ and $H: U \rightarrow \C^3$ a holomorphic mapping with components $H=(f_1,f_2,g)$ and $H(U \cap \HeisenbergOne{2}) \subset \HeisenbergTwo{\eps}{3}$ which is transversal to $\HeisenbergTwo{\eps}{3}$ outside a proper real-analytic subset of $\HeisenbergOne{2}$. 
There exists a proper, real-analytic subset $X$ of $U \cap \HeisenbergOne{2}$ such that after shrinking $U$ and performing a change of coordinates in $U\setminus X$ the following two mutually exclusive statements hold: 
\begin{compactitem} 
\item[\rm{(i)}] $H$ is $2$-nondegenerate, such that $f_{1z}(0) f_{2 z^2}(0) - f_{2 z}(0) f_{1 z^2}(0) \neq 0$.
\item[\rm{(ii)}] $H$ is constantly $(1,1)$-degenerate.
\end{compactitem}
\end{proposition}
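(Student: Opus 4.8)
The plan is to analyze the degeneracy spaces $E'_k(q)$ directly, using the mapping equation \eqref{eq:MapEq} and the transversality normalization $g_w(0)\neq 0$ from \autoref{prop:transversality}(i). Since $\HeisenbergTwo{\eps}{3}$ has codimension $d'=1$ with defining function $\rho'(z',w')=\im w' - |z_1'|^2 - \eps|z_2'|^2$, the complex gradient is $\rho'_{Z'}(Z',\zeta') = (-\bar z_1', -\eps \bar z_2', \tfrac{1}{2\imu})$, so substituting $H$ gives, up to a constant, the vector $\big(\bar f_1(\chi,\tau),\, \eps\bar f_2(\chi,\tau),\, \tfrac{-1}{4}\big)$ evaluated along the complexified Heisenberg surface. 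The CR vector field on $\HeisenbergOne{2}$ is (a multiple of) $L = \partial_\chi + 2\imu z\,\partial_\tau$ in the complexified picture; applying $L^\alpha$ and restricting to $(z,\chi,\tau)=(q,\bar q,\bar q_2)$ produces the spanning vectors of $E'_k(q)$. First I would compute $E'_0(q)$, $E'_1(q)$, $E'_2(q)$ at $q=0$: by \autoref{rem:Immersive} the map is immersive, so $(f_{1z}(0),f_{2z}(0))\neq 0$, and combined with $g_w(0)\neq 0$ one checks that $\dim_\C E'_1(0)\geq 2$ always, with equality iff the two vectors $(\bar f_{1z}(0),\eps\bar f_{2z}(0),0)$-type contributions are dependent — but after a linear change of coordinates in $\C^3$ fixing $\HeisenbergTwo{\eps}{3}$ (an automorphism in $\Auto(\HeisenbergTwo{\eps}{3},0)$) one can arrange $f_{2z}(0)=0$, $f_{1z}(0)\neq 0$, so $E'_1(0)=\Span_\C\{(1,0,0),(0,0,1)\}$ (up to the signature-twist), of dimension exactly $2$. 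Then $E'_2(0)$ is obtained by adjoining $L^2$-derivatives, whose new component is governed by $(\bar f_{1z^2}(0),\eps\bar f_{2z^2}(0))$ modulo the already-present directions; the second coordinate is the only possibly-new one, and it is nonzero precisely when $f_{2z^2}(0)\neq 0$ (after the normalization $f_{2z}(0)=0$), i.e. precisely when $f_{1z}(0)f_{2z^2}(0)-f_{2z}(0)f_{1z^2}(0)\neq 0$.

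This gives the dichotomy: either the determinant $f_{1z}(0)f_{2z^2}(0)-f_{2z}(0)f_{1z^2}(0)$ is nonzero, in which case $E'_2(0)=\C^3$ and $H$ is $2$-nondegenerate with $k_0=2$, yielding case (i); or this determinant vanishes, in which case $\dim_\C E'_k(0)=2$ for all $k\geq 1$. In the latter case I would still need to argue that the degeneracy is \emph{constantly} $(1,1)$ near a generic point, not merely at $0$. For this I would invoke \autoref{def:GenericDegeneneracy} together with the cited \cite[Lemma 22]{lamel}: the generic degeneracy $s_H$ is attained, and $H$ is constantly $(k_0,s_H)$-degenerate outside a proper real-analytic subset $X$ of $U\cap\HeisenbergOne{2}$. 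One then has to rule out $s_H=0$ and $s_H\geq 2$ in the non-$2$-nondegenerate branch: $s_H\geq 2$ would force $\dim_\C E'_k(q)\leq 1$ generically, contradicting transversality (which already supplies the direction $(0,0,1)$-type together with an immersive first-order direction, giving $\dim\geq 2$); and $s_H=0$ with $k_0\geq 3$ cannot occur here by the standard bound $k_0\leq d'\cdot(\text{something})$ adapted to $n=1$, $n'=2$ — more concretely, once $E'_1$ has dimension $2$ and the only room to grow is one further dimension, nondegeneracy must be detected at level $2$ or not at all. So the non-$2$-nondegenerate case forces $s_H=1$ and $k_0=1$, which is case (ii). The two cases are visibly mutually exclusive since they are distinguished by the vanishing of an explicit first/second-order quantity (a biholomorphic invariant by \autoref{rem:DegeneracyInvariance}).

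The main obstacle I expect is bookkeeping the coordinate changes cleanly: one must perform a linear normalization in the target (to make $f_{2z}(0)=0$) and possibly a change of coordinates on $U\setminus X$ (to pass to a generic point where the degeneracy is constant and to normalize $g_w(0)\neq 0$ via \autoref{prop:transversality}(i)), and then verify that the invariant formulation of nondegeneracy in \autoref{def:degeneracy} — which is coordinate-independent by \autoref{rem:DegeneracyInvariance} — indeed reduces to the stated explicit nonvanishing condition $f_{1z}(0)f_{2z^2}(0)-f_{2z}(0)f_{1z^2}(0)\neq 0$ in the normalized coordinates. The actual computation of the $L^\alpha$-derivatives of $\rho'_{j,Z'}(H(Z),\bar H(\zeta))$ is routine once the CR vector field $L$ and the defining function are written down explicitly; the care is entirely in organizing which derivatives of $f_1,f_2$ appear at each jet level and checking that no earlier-obtained direction is accidentally recounted.
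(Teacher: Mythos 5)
Your proposal is correct in substance but takes a more explicit route than the paper. The paper's own proof is essentially a citation chain: it invokes \cite[Lemma 22]{lamel} to get constant $(k_0,s_H)$-degeneracy off a proper real-analytic subset, translates $0$ to such a generic point via $t'_{H(q)}\circ H\circ t_q$, and then cites \cite[Lemma 23--24]{lamel} wholesale for the bounds $k_0\le 2$, $s\le 1$ and the resulting dichotomy; the explicit determinant condition $f_{1z}(0)f_{2z^2}(0)-f_{2z}(0)f_{1z^2}(0)\neq 0$ is only recorded afterwards in \autoref{remark:NotTwoNondegeneratePoints} by reference to Lamel's formula (25). You instead compute $E'_0(0)\subset E'_1(0)\subset E'_2(0)$ directly from the gradient $(-\bar f_1,-\eps\bar f_2,-\tfrac{\imu}{2})$ and the CR field, which buys you the explicit nonvanishing condition for free and makes transparent why $\dim_\C E'_1=2$ at every transversal point (via \eqref{eq:TransversalImmersive}), hence why $s_H\le 1$. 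Two caveats. First, a small sign/constant slip: for the complexification $w=\tau+2\imu z\chi$ the CR field is $L=\partial_\chi-2\imu z\,\partial_\tau$ and the last gradient component is $\tfrac{1}{2\imu}$; neither affects the computation at $q=0$. Second, and more importantly, your statement that the vanishing of the determinant forces $\dim_\C E'_k(0)=2$ for all $k\ge 1$, and the "more concretely, nondegeneracy must be detected at level $2$ or not at all" justification, are \emph{not} consequences of dimension counting alone: pointwise the chain $E'_k(q)$ can stall at $k=2$ and resume growing at $k=3$. What rules this out is precisely the generic stabilization statement (if $E'_{k+1}=E'_k$ on an open subset of constant rank then the chain is stationary from $k$ on), i.e.\ the content of \cite[Lemma 23--24]{lamel}. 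Since you do restrict to the generic locus from \cite[Lemma 22]{lamel} before making this claim, the argument goes through, but that step should be attributed to the cited lemmas rather than presented as an obvious counting fact; with that attribution your proof matches the paper's standard of rigor while being more self-contained on the computational side.
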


\begin{proof}
By \cite[Lemma 22]{lamel} we have $(k_0,s_H)$-degeneracy outside a proper, real-analytic subset of $\HeisenbergOne{2}$. By \autoref{rem:TransversalityInvariance} and \autoref{rem:DegeneracyInvariance} after a change of coordinates we assume that $0$ is a point where $H$ is constantly $(k_0,s_H)$-degenerate and transversal to $\HeisenbergTwo{\eps}{3}$. This change of coordinates is performed via composing $H$ with translations such that $0$ gets mapped to a point $q$ where $H$ is constantly $(k_0,s_H)$-degenerate and transversal to $\HeisenbergTwo{\eps}{3}$, i.e., we consider the mapping $t'_{H(q)} \circ H \circ t_q$ from \eqref{eq:TranslationC2} and \eqref{eq:TranslationC3} instead of $H$. At this point it is possible that we need to shrink $U$. Then we apply \cite[Lemma 23--24]{lamel} to obtain the desired result.
\end{proof}

\begin{remark}[label=remark:NotTwoNondegeneratePoints]
We let $H=(f_1,f_2,g)$ be as in \autoref{prop:BoundsDegeneracy}. According to \autoref{def:degeneracy} and formula $(25)$ in \cite[Section 2.3]{lamel} we note that the set $N$ of points in $\HeisenbergOne{2}$, where $H$ is not $2$-nondegenerate, is given by $N = \left\{p \in \HeisenbergOne{2}: L f_1(p) L^2 f_2(p) - L f_2(p) L^2 f_1(p) = 0\right\}$, where $L$ is a basis of CR-vector fields for $\HeisenbergOne{2}$. 
\end{remark}

\subsection{Initial Classification and the Class \texorpdfstring{$\FTwo$}{F}}

\begin{definition}[label=def:F2]
For a neighborhood $U \subset \C^2$ of $0$ let us denote the set $\FTwo(U)$ of holomorphic mappings $H=(f_1,f_2,g)$ with $H(U \cap \HeisenbergOne{2}) \subset \HeisenbergTwo{\eps}{3}$, which satisfy $H(0) =0$,
\begin{align}
\label{eq:F2Cond}
\Delta(1,0;2,0) = f_{1 z} (0) f_{2 z^2} (0) -f_{2 z} (0)f_{1 z^2} (0) \neq 0 \qquad \text{and} \qquad  g_w(0) >0.
\end{align}
We denote by $\FTwo$ the set of germs $H$, such that $H \in \FTwo(U)$ for some neighborhood $U \subset \C^2$ of $0$.
\end{definition}

\begin{proposition}[label=proposition:FirstProperties]
Let $U \subset \C^2$ be an open and connected neighborhood of $0$ and $H: U \rightarrow \C^3$ a non-constant holomorphic mapping given by $H=(f_1,f_2,g)$ with $H(U \cap \HeisenbergOne{2}) \subset \HeisenbergTwo{\eps}{3}$ and $H(0) = 0$. Then, after  possibly shrinking $U$, changing coordinates or composing $H$ with automorphisms, one of the following mutually exclusive statements holds:
\begin{compactitem} 
\item[\rm{(i)}] $H$ is transversal to $\HeisenbergTwo{\eps}{3}$ and $2$-nondegenerate at $0$ and we can assume that $H \in \FTwo$.
\item[\rm{(ii)}] $H$ is equal to the linear embedding $(z,w) \mapsto (z,0,w)$.
\item[\rm{(iii)}] For $\eps = -1$: $H$ is a mapping of the form $(z,w) \mapsto (h(z,w),h(z,w),0)$ for some non-constant holomorphic function  $h: U \rightarrow \C$ with $h(0) = 0$.
\end{compactitem}
\end{proposition}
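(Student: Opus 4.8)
The plan is to route $H$ through the two dichotomies established in the Preliminaries --- transversality (\autoref{prop:transversality}) and the bound on the degeneracy (\autoref{prop:BoundsDegeneracy}) --- and to check that each branch leads to one of the three asserted normal forms, using only the automorphisms parametrized in \autoref{def:Autom}. So first I would apply \autoref{prop:transversality}: either $H(U)\subset\HeisenbergTwo{\eps}{3}$, which by that proposition forces $\eps=-1$, or $H$ is transversal to $\HeisenbergTwo{\eps}{3}$ outside a proper real-analytic subset of $\HeisenbergOne{2}$.

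Consider first the case $H(U)\subset\HeisenbergTwo{-}{3}$. Then the mapping equation \eqref{eq:firstMapEq} holds on all of $U$, so its complexification, with $z,w,\chi,\tau$ now independent, holds near $0$ in $\C^4$. Setting $\chi=\tau=0$ and using $H(0)=0$ forces $g\equiv 0$, and then $f_1(z,w)\bar f_1(\chi,\tau)=f_2(z,w)\bar f_2(\chi,\tau)$ identically. Since $H$ is non-constant, neither $f_1$ nor $f_2$ vanishes identically, so evaluating at a point where both are nonzero gives $f_1=\mu f_2$ with $\mu\in\C$, and re-substituting forces $|\mu|=1$. Composing $H$ with the unitary automorphism $(z_1',z_2',w')\mapsto(\bar\mu z_1',z_2',w')$ of $\HeisenbergTwo{-}{3}$ --- an instance of \eqref{eq:tau} --- turns $H$ into $(h,h,0)$ with $h\coloneqq f_2$ non-constant and $h(0)=0$; this is alternative (iii).

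Now suppose $H$ is transversal outside a proper real-analytic subset of $\HeisenbergOne{2}$, and apply \autoref{prop:BoundsDegeneracy}. After shrinking $U$ and the change of coordinates described in its proof (which relocates the base point to one that is both transversal and of constant degeneracy), $0$ is a transversality point and either $H$ is $2$-nondegenerate at $0$ with $\Delta(1,0;2,0)\neq 0$, or $H$ is constantly $(1,1)$-degenerate. Since $0$ is a transversality point, \eqref{eq:TransversalImmersive} gives $g_w(0)\in\R$ and \autoref{prop:transversality}(i) gives $g_w(0)\neq 0$. In the $2$-nondegenerate branch, if $\eps=-1$ and $g_w(0)<0$ I would first compose $H$ with $\pi'$ from \eqref{eq:Pi}, replacing $(f_1,f_2,g)$ by $(f_2,f_1,-g)$; this flips the signs of $g_w(0)$ and $\Delta(1,0;2,0)$ while preserving transversality, $2$-nondegeneracy, the inclusion into $\HeisenbergTwo{-}{3}$, and $H(0)=0$. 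Afterwards $g_w(0)>0$, all conditions of \eqref{eq:F2Cond} hold, and $H\in\FTwo$ --- alternative (i).

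It remains to handle the constantly $(1,1)$-degenerate branch, which I expect to be the delicate step. Here I would invoke the reduction theory for degenerate maps (Lamel \cite{lamel}): a constantly $(1,1)$-degenerate transversal map into $\HeisenbergTwo{\eps}{3}$ is, after a biholomorphic change of coordinates in the target, the linear embedding $(z,w)\mapsto\bigl(\tilde f_1(z,w),0,\tilde g(z,w)\bigr)$ of a non-constant map $(\tilde f_1,\tilde g)$ sending $(\HeisenbergOne{2},0)$ into a lower-dimensional Levi-nondegenerate hyperquadric of $\C^2$; since $\HeisenbergOne{2}$ is, up to biholomorphism, the only such hyperquadric, we may arrange that $(\tilde f_1,\tilde g)$ maps $(\HeisenbergOne{2},0)$ to $(\HeisenbergOne{2},0)$. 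As $H$ is immersive at $0$ (\autoref{rem:Immersive}), so is this reduced map, whence $(\tilde f_1,\tilde g)$ is a germ of biholomorphism fixing $0$ that preserves $\HeisenbergOne{2}$, i.e.\ $(\tilde f_1,\tilde g)\in\Auto(\HeisenbergOne{2},0)$; by \autoref{def:Autom} it equals $\sigma_\gamma$ for some $\gamma\in\Gamma$. Precomposing $H$ with $\sigma_\gamma^{-1}$ then produces the linear embedding $(z,w)\mapsto(z,0,w)$ --- alternative (ii). Finally I would record that the three alternatives are mutually exclusive: the embedding and the maps $(h,h,0)$ both have $\Delta(1,0;2,0)=0$ and so do not lie in $\FTwo$, while $(h,h,0)$ is not transversal to $\HeisenbergTwo{-}{3}$ whereas the embedding is. I expect the only real obstacle to be this reduction step in the $(1,1)$-degenerate case; everything else amounts to tracking the effect of the explicit automorphisms in \eqref{eq:sigma}--\eqref{eq:Pi}.
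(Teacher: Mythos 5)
Your proposal follows essentially the same route as the paper: the transversality dichotomy of \autoref{prop:transversality}, the degeneracy bound of \autoref{prop:BoundsDegeneracy} together with the $\pi'$-trick to arrange $g_w(0)>0$, containment of the image in a $2$-dimensional complex hyperplane plus immersivity in the constantly $(1,1)$-degenerate branch, and the reduction of the non-transversal case to $g\equiv 0$ and $|f_1|^2=|f_2|^2$. The only differences are cosmetic: the paper cites Ebenfelt--Shroff for the hyperplane containment (you attribute the same fact to Lamel's reduction theory) and D'Angelo for the step yielding $f_1=\mu f_2$ with $|\mu|=1$, which you instead derive directly; both versions are correct.
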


\begin{proof}
We apply \autoref{prop:transversality} to obtain that either $H$ is transversal to $\HeisenbergTwo{\eps}{3}$ outside a proper, real-analytic set of $U \cap \HeisenbergOne{2}$ or for $\eps = -1$ we have $H$ maps a neighborhood $U\subset \C^2$ of $0$ to $\HeisenbergTwo{-}{3}$.\\
We assume the first condition for $H$ and apply \autoref{prop:BoundsDegeneracy} to obtain that after possibly composing $H$ with translations that $H$ is transversal to $\HeisenbergTwo{\eps}{3}$ at $0$ and either $2$-nondegenerate or constantly $(1,1)$-degenerate near $0$. By \autoref{prop:transversality} (i) we can assume that $g_w(0) \neq 0$. For $\eps = +1$ by \eqref{eq:TransversalImmersive} we immediately have $g_w(0) > 0$. If $\eps = -1$ and  we have $g_w(0) < 0$ we compose $H$ with the automorphism $\pi'$ from \eqref{eq:Pi}.\\
If we assume $H$ is transversal to $\HeisenbergTwo{\eps}{3}$ at $0$ and $2$-nondegenerate near $0$, we immediately obtain (i) by \autoref{prop:BoundsDegeneracy} (i).\\ 
If we assume $H$ is transversal to $\HeisenbergTwo{\eps}{3}$ at $0$ and $(1,1)$-degenerate near $0$ we either refer to \cite[Chapter 7]{reiter} or we apply \cite[Theorem 1.1]{ES}, which implies that the image of $H$ is contained in a $2$-dimensional complex hyperplane. From \autoref{rem:Immersive} we know that $H$ is immersive, hence after a change of coordinates we may assume that $H=(f,0,g)$ where $(z,w)\rightarrow \bigl(f(z,w),g(z,w)\bigr)$ is a biholomorphism from $(\C^2,0)$ to $(\C^2,0)$. Since $H$ maps $\HeisenbergOne{2}$ to $\HeisenbergTwo{\eps}{3}$ and fixes $0$ we conclude that $H$ is an isotropy, hence (ii) follows.\\
To finish the proof we need to treat the case if $\eps = -1$ and $H$ maps a neighborhood $U\subset \C^2$ to $\HeisenbergTwo{-}{3}$. Here the following mapping equation holds for all $(z,w,\chi,\tau) \in W$ for some neighborhood $W \subset \C^4$ of $0$: 
\begin{align*}
g(z,w) - \bar g(\chi,\tau) - 2 \imu \bigl( f_1(z,w) \bar f_1(\chi, \tau) - f_2(z,w) \bar f_2(\chi, \tau) \bigr)= 0.
\end{align*}
Setting $\chi=\tau=0$ we obtain $g(z,w) = 0$ such that the above equation reduces to $|f_1(z,w)|^2 = |f_2(z,w)|^2$. Next we apply \cite[Chapter 3, Proposition 3]{dangelo2} and an automorphism of $\HeisenbergTwo{-}{3}$ of the form $(z_1',z_2',w') \mapsto (z_1', u z_2', w')$ with $|u|=1$ to $(z,w) \mapsto (f(z,w),0)$, such that the image of $H$ is contained in the complex variety given by $\bigl\{(z_1',z_2',w') \in \C^3: z'_1 = z'_2, w' =0\bigr\}$. Thus $H$ is equivalent to the map $(z,w) \mapsto (h(z,w), h(z,w),0)$ for some non-constant holomorphic function $h: \C^2 \rightarrow \C$ with $h(0) =0$.
\end{proof}

\section{Normal Form \texorpdfstring{$\NTwo$}{N} for Mappings in \texorpdfstring{$\FTwo$}{F}}
\label{sec:NormalForm}

Note that the conditions \eqref{eq:F2Cond} which define the class $\FTwo$ are preserved if we apply isotropies fixing $0$ to a map in $\FTwo$.

\begin{proposition}[label=proposition:NormalForm2Nondeg]
Let $H \in \FTwo$. Then there exist isotropies $(\sigma ,\sigma') \in \Isotropies$ such that $\widehat H \coloneqq \sigma' \circ H \circ \sigma$ satisfies $\widehat H (0) =0$ and the following conditions:
\begin{multicols}{3}
\begin{compactitem}
\item[\rm{(i)}] $\widehat H_z(0) =(1,0,0)$
\item[\rm{(ii)}] $\widehat H_w(0) = (0,0,1)$
\item[\rm{(iii)}] $\widehat f_{2 z^2}(0) = 2$
\item[\rm{(iv)}] $\widehat f_{2 z w}(0) = 0$
\item[\rm{(v)}] $\widehat f_{1 w^2}(0) = |\widehat f_{1 w^2}(0)| \geq 0$
\item[\rm{(vi)}] $\re\bigl(\widehat g_{w^2}(0)\bigr) = 0$
\item[\rm{(vii)}] $\re\bigl(\widehat f_{2 z^2 w}(0) \bigr) = 0$
\end{compactitem}
\end{multicols}
\end{proposition}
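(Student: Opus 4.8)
The plan is to exhaust the $8$ real parameters in $\sigma_\gamma$ (parametrized by $\gamma=(\lambda,r,u,c)\in\Gamma$, so $1+1+1+2=5$ real parameters) together with the real parameters in $\sigma'_{\gamma'}$ (parametrized by $\gamma'=(\lambda',r',u',a',c')\in\Gamma'$, so $1+1+1+1+4=8$ real parameters if $\eps=-1$ with $\sigma$ fixed, one less in the $\eps=+1$ Levi-sphere case because $\mathcal S^2_{\eps,\sigma}$ is a $3$-sphere rather than a circle), matching them one normalization at a time against the low-order Taylor coefficients of $\widehat H$. I would first record the transformation law: writing $\widehat H=\sigma'_{\gamma'}\circ H\circ\sigma_\gamma$, a chain-rule computation expresses $j^3_0\widehat H$ as a polynomial in $j^3_0 H$ and the entries of $\gamma,\gamma'$; since $H\in\FTwo$ guarantees $\Delta(1,0;2,0)\neq 0$ and $g_w(0)>0$ (and $H$ is immersive and transversal by \autoref{rem:Immersive} and \autoref{prop:transversality}), the relevant leading coefficients are invertible, which is what makes the normalization solvable.

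The order of operations I would carry out is the following. \emph{First order.} Using $\lambda,\lambda',u,u'$ and the linear part $U'$ of the target isotropy, rescale and rotate so that $\widehat H_z(0)=(1,0,0)$ and $\widehat H_w(0)=(0,0,1)$; here one uses $\Delta(1,0;2,0)\neq0$ to know the $z$-derivative is nonzero in the first slot after a suitable $U'$, transversality/immersivity (via \eqref{eq:TransversalImmersive}) to know $g_w(0)>0$ so that a positive real $\lambda'$ can absorb it, and the freedom in choosing $a'\in\mathcal S^2_{\eps,\sigma}$ and $u'$ to align the frame — this is essentially the statement that the isotropy group acts transitively on admissible $1$-jets. \emph{Second order, $f_2$.} The coefficient $c$ in $\sigma_\gamma$ (a complex parameter) shifts $\widehat f_{2zw}(0)$ and the leftover $S^1$-rotation fixes $\widehat f_{2z^2}(0)$ to be the positive real number $2$ (the value $2$ rather than $1$ coming from the $2\imu z\chi$ normalization of $\HeisenbergOne2$, i.e.\ one uses $\Delta(1,0;2,0)\neq0$ to see $f_{2z^2}$-direction survives); so (iii) and (iv) consume $c\in\C$ and the phase. \emph{Second order, remaining.} The real parameter $r$ in $\sigma_\gamma$ together with $r'$ and the leftover components of $c'\in\C^2$ in $\sigma'_{\gamma'}$ are then used to kill $\re(\widehat g_{w^2}(0))$, to make $\widehat f_{1w^2}(0)$ real and nonnegative, and to kill $\re(\widehat f_{2z^2w}(0))$ — these are (vi), (v), (vii). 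One must check that the Jacobian of the map from the remaining free real parameters to the targeted real quantities is nonsingular; this is where a short explicit computation (partly symbolic, as the paper warns) is unavoidable, but it reduces to showing a lower-triangular-in-order system has nonzero diagonal, each diagonal entry being a nonzero multiple of a quantity like $g_w(0)$ or $|f_{2z^2}(0)|$ that is nonzero on $\FTwo$.

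I would present the argument as: (a) write down the three displayed transformation formulas for $\widehat H_z(0)$, $\widehat H_w(0)$, and the second-order jet; (b) solve the first-order equations for $\lambda,\lambda',u,u',a'$ (and note the residual group: a circle in the source, and the stabilizer of the normalized $1$-jet in the target, which still contains $r$, $r'$, part of $c'$, and a phase); (c) solve the remaining seven real equations for the seven residual real parameters, verifying at each step that the coefficient being solved against is nonzero thanks to \eqref{eq:F2Cond}; (d) conclude that the composite isotropy $(\sigma,\sigma')$ exists. The main obstacle I anticipate is purely bookkeeping: keeping the chain-rule expansion of $\sigma'_{\gamma'}\circ H\circ\sigma_\gamma$ organized enough to see the triangular structure, since the denominators in \eqref{eq:sigma} and \eqref{eq:tau} contribute to the second-order jet and entangle $r,r',c'$ with lower-order data; once the triangular structure is exposed, each individual equation is elementary. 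A secondary subtlety is the case split $\eps=\pm1$: for $\eps=+1$ the target isotropy group is larger (the $3$-sphere of $a'$ versus a circle), so one has an extra parameter, but it is absorbed harmlessly and the same normalization (i)–(vii) results, so I would remark that the $\eps$-dependence does not affect the count.
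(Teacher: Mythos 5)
Your proposal follows essentially the same route as the paper: compose with the full isotropy groups, expand the low-order jet of $\widehat H$, and solve the resulting triangular system for the standard parameters one condition at a time, with $\Delta(1,0;2,0)\neq 0$ and $g_w(0)>0$ guaranteeing each pivot is nonzero. The only differences are bookkeeping ones in which parameter is assigned to which condition (the paper uses $a',c',\lambda'$ for the first-order conditions, $c$ for (iv), $\lambda,u'$ for (iii), the phase $u$ for (v), and the pair $r,r'$ for (vi)--(vii) via an invertible $2\times 2$ real linear system), and your count of residual real parameters at the last stage is slightly off, but neither affects the validity of the argument.
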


\begin{definition}[label=def:N2]
We refer to the equations and inequalities given in \Autoref{proposition:NormalForm2Nondeg} as \textit{normalization conditions}. A holomorphic mapping of $\FTwo$ satisfying the normalization conditions is called a \textit{normalized mapping}. The set of normalized mappings is denoted by $\NTwo$.
\end{definition}

\begin{proof}[Proof of \autoref{proposition:NormalForm2Nondeg}]
We consider $\widehat H \coloneqq \sigma' \circ H \circ \sigma$, where we use all standard parameters in $\sigma$ and $\sigma'$ with the notation of \eqref{eq:sigma} and \eqref{eq:tau}. Then we compute the coefficients of $\widehat H$ we want to normalize and solve the resulting equations for the standard parameters. The first equations are the following, where we take the $2\times 2$-matrix $U'$ as in \eqref{eq:UTwoOneOne}:
\begin{align*}
\widehat H_{z}(0) = &  \left( u \lambda \lambda' U' {^t ( f_{1z}(0), f_{2 z}(0))} , 0\right) = \bigl(1,0,0\bigr),
\\
\widehat H_w(0)  = & \left( u \lambda \lambda' U'
 \left(\begin{array}{c}
c_1' \lambda g_w(0) + \lambda f_{1 w}(0) + c u f_{1 z}(0) \\
c_2' \lambda g_w(0) + \lambda f_{2 w}(0) + c u f_{2 z}(0) \end{array}\right), \lambda^2 {\lambda'}^2 g_w(0)\right) = ~(0,0,1),
\end{align*}
which can be solved using \eqref{eq:TransversalImmersive} by
\begin{align*}
a_1' = \frac{ \bar f_{1 \chi}(0)}{u u' |f_z(0)|_{\eps}}, \qquad
a_2' = -\frac{\bar f_{2 \chi}(0)}{u u' |f_z(0)|_{\eps}},
\end{align*}
such that $a' = (a_1',a_2') \in \mathcal S^2_{\eps,\sigma}$ and by
\begin{align*}
c_1'  = \frac{-c u f_{1 z}(0) - \lambda f_{1 w}(0)}{\lambda  g_w(0)}, \qquad c_2' = \frac{-c u f_{2 z}(0) - \lambda f_{2 w}(0)}{\lambda  g_w(0)},  \qquad \lambda '  =\frac{1}{\lambda \sqrt{g_w(0)}},
\end{align*}
since we require $\lambda,g_w(0)>0$. Then we use \eqref{eq:TransversalImmersive} as well as the formulas for the standard parameters for $a',c'=(c_1',c_2')$ and $\lambda'$ to obtain the following equation:
\begin{align*}
\widehat f_{2 z w}(0) = ~& \frac{u^2 u' \lambda}{g_w(0)^2} \Bigl(c u g_w(0)\Delta(1,0;2,0) + \lambda \bigl (g_{zw}(0) \Delta(0,1;1,0) + g_w(0) \Delta(1,0;1,1) \bigr) \Bigr) = 0,
\end{align*}
which has a unique solution $c\in \C$ by \eqref{eq:F2Cond} and is given by
\begin{align*}
c = & ~-\frac{\lambda  \bigl( g_{z w}(0) \Delta(0,1;1,0) + g_w(0)\Delta(1,0;1,1)  \bigr)}{u g_w(0)\Delta(1,0;2,0)}.
\end{align*}
Then using the representations for $a', \lambda'$ and equation \eqref{eq:TransversalImmersive}:
\begin{align*}
\widehat f_{2 z^2}(0)  = & ~\frac{u^3 u' \lambda\Delta(1,0;2,0)}{ |f_z(0)|^2_{\eps}} = 2,
\end{align*}
the solution is given by
\begin{align}
\label{eq:parametersLambdaU}
\lambda = \frac {2 |f_z(0)|^2_{\eps}}{|\Delta(1,0;2,0) |}, \qquad u' = \frac{ |\Delta(1,0;2,0) |}{u^3 \Delta(1,0;2,0)},
\end{align}
since $\Delta(1,0;2,0) \neq 0$. Then, using all the previously deduced standard parameters, we compute $\widehat f_{1 w^2}(0) = T_1\bigl(j_0^2H\bigr)/u$, where $T_1\bigl(j_0^2H\bigr) \in \C$ is a real-analytic function in $j_0^2 H$, which does not depend on $u$. Thus there is a $u$ with $|u|=1$, such that $\widehat f_{1 w^2}(0) = |\widehat f_{1 w^2}(0)| \geq 0$. Finally we consider the following coefficients, where $\lambda>0$ is given by \eqref{eq:parametersLambdaU}
\begin{align}
\nonumber
\re\bigl(\widehat g_{w^2}(0)\bigr)  = & -2 r  - 2 r' \lambda^2 g_w(0) + T_2\bigl(j_0^2H\bigr)= 0,\\ 
\label{eq:f2z2w}
\re\bigl(\widehat f_{2 z^2 w}(0)\bigr)  = & -2 r - r' \lambda^2 g_w(0) + T_3\bigl(j_0^3H\bigr) = 0,
\end{align}
where $T_2\bigl(j_0^2H\bigr), T_3\bigl(j_0^3H\bigr) \in \R$ are real-analytic functions in $j_0^2H$ and $j_0^3H$ respectively and both do not depend on $r$ or $r'$. Thus we can uniquely solve for the real parameters $r$ and $r'$.
\end{proof}

\begin{remark}[label=rem:moreCoefficientsN]
Further inspection of $T_3$ in \eqref{eq:f2z2w} shows that the coefficients of $H$ at $0$ of order $3$ occurring in $T_3$ are $f_{z^3}(0)$ and $H_{z^2 w}(0)$.
\end{remark}

\begin{remark}[label=remark:SummaryJetNormalizedMapping]
If we assume $H \in \NTwo$, then by inspecting \eqref{eq:MapEq} we obtain the following conditions for some coefficients belonging to the $3$-jet of $H$ at $0$.
\begin{multicols}{2}
\begin{compactitem}
\item[\rm{(i)}]  $H(0)= (0,0,0)$
\item[\rm{(ii)}]  $H_z(0) = (1,0,0)$
\item[\rm{(iii)}]  $H_w(0) = (0,0,1)$
\item[\rm{(iv)}] $H_{z^2}(0) = (0,2,0)$
\item[\rm{(v)}] $H_{zw}(0) = (\frac{\imu \eps} 2 ,0,0)$
\item[\rm{(vi)}]  $H_{w^2}(0) = (|f_{1 w^2}(0)|, f_{2 w^2}(0),0)$
\item[\rm{(vii)}] $H_{z^3}(0)=(0, 12 \eps |f_{1 w^2}(0)|, 0)$
\item[\rm{(viii)}] $H_{z^2 w}(0) = (4 \imu |f_{1 w^2}(0)|,\imu \im(f_{2 z^2 w}(0)), 0)$
\item[\rm{(ix)}] $H_{z w^2}(0) = \bigl(\frac 1 {4} \bigl(-1 + 2 \re(g_{w^3}(0))\bigr), \\
 f_{2 z w^2}(0), 2 \imu  |f_{1 w^2}(0)|\bigr)$ 
\end{compactitem}
\end{multicols}
\end{remark}

\section{Classification of Mappings in \texorpdfstring{$\NTwo$}{N}}
\label{sec:MappingsInN}

Let us state the explicit version of \autoref{theorem:ReductionOneParameterFamilies}.
\begin{theorem}[label=theorem:ReductionOneParameterFamilies2]
The set $\NTwo$ consists of the following mappings, where for $H =(f_1,f_2,g)  \in \NTwo$ we denote the parameter $s \coloneqq 2 f_{1 w^2}(0) \geq0$:
\begin{align*}
\MapOneParameterFamilyTwo{1}{\eps}(z,w)\coloneqq&~ \Bigl(2 z (2 + \imu \eps w),4 z^2,4 w\Bigr)/(4-w^2),\\
\MapOneParameterFamilyThree{2}{s}{\eps}(z,w)\coloneqq& ~\Bigl(4 z - 4 \eps s z^2 + \imu (\eps  - s^2) z w  + s w^2,  4 z^2 + s^2 w^2,w (4- 4 \eps s z - \imu  (\eps +s^2) w) \Bigr) \\
& \quad  \slash \Bigl( 4 - 4 \eps s z - \imu (\eps+ s^2) w   - 2 \imu s z w-  \eps s^2 w^2 \Bigr),\\
\MapOneParameterFamilyThree{3}{s}{\eps}(z,w)\coloneqq & ~\Bigl(256 \eps z  + 96 \imu z w + 64 \eps s w^2 + 64 z^3 + 64 \imu \eps s z^2 w - 3 (3 \eps - 16 s^2) z w^2  + 4 \imu s w^3,\\
& \quad  256 \eps z^2 - 16 w^2 + 256s z^3 + 16 \imu z^2 w - 16 \eps s z w^2 - \imu \eps w^3, \\
& \quad w(256 \eps - 32 \imu w + 64 z^2  - 64 \imu \eps s z w  -  (\eps + 16 s^2) w^2)\Bigr)  \slash  \Bigl( 256 \eps - 32 \imu w + 64 z^2 \\
& \quad - 192 \imu \eps s z w  - (17 \eps + 144 s^2) w^2  + 32 \imu \eps z^2 w + 24 s z w^2 + \imu w^3 \Bigr).
\end{align*}
Each mapping in $\NTwo$ is not isotropically equivalent to any different mapping in $\NTwo$.
\end{theorem}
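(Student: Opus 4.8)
The plan is to classify $\NTwo$ by a jet parametrization in the spirit of Baouendi--Ebenfelt--Rothschild and Lamel, and to derive the rigidity statement from uniqueness of the normal form. The first step uses the complexified mapping equation~\eqref{eq:MapEq}: differentiating it repeatedly in $\chi$ and $\tau$ and evaluating along $(z,0,0)$ produces, order by order, linear systems for the Taylor coefficients of the reflection $(\bar f_1,\bar f_2,\bar g)$ at $0$ whose solvability is governed by the $2$-nondegeneracy determinant $\Delta(1,0;2,0)=f_{1z}(0)f_{2z^2}(0)-f_{2z}(0)f_{1z^2}(0)\neq 0$ from~\eqref{eq:F2Cond}. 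Solving these recursively expresses every coefficient $H_{z^{\alpha}w^{\beta}}(0)$ as a rational function of a fixed finite jet of $H$ at $0$, so that in particular $H$ is rational. Imposing the normalization of~\autoref{proposition:NormalForm2Nondeg} collapses the relevant jet to the short list of free coefficients recorded in~\autoref{remark:SummaryJetNormalizedMapping}: $f_{1w^2}(0)=s/2\geq0$, $f_{2w^2}(0)$, $\im(f_{2z^2w}(0))$, $\re(g_{w^3}(0))$ and $f_{2zw^2}(0)$, where by~\autoref{rem:moreCoefficientsN} the additional third-order data enters only through the auxiliary function $T_3$.

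The second step pins these parameters down. Substituting the jet parametrization back into~\eqref{eq:MapEq}---equivalently, demanding that the formal solution obtained above genuinely satisfies the mapping equation to all orders---turns the classification into a polynomial system in the free coefficients. I expect its zero set to be reducible: one isolated point, whose associated rational map is $\MapOneParameterFamilyTwo{1}{\eps}$, together with two one-dimensional components. The desingularization is the passage from the (singular or multivalued) jet coordinates on these two components to the single parameter $s\coloneqq 2f_{1w^2}(0)\geq 0$, in which one obtains the degree-$2$ family $\MapOneParameterFamilyThree{2}{s}{\eps}$ and the family $\MapOneParameterFamilyThree{3}{s}{\eps}$ of degree at most $3$ in the asserted explicit form. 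Conversely one verifies directly that each listed map belongs to $\NTwo$: that it sends $\HeisenbergOne{2}$ into $\HeisenbergTwo{\eps}{3}$ is, after complexification, a polynomial identity in $(z,\chi,\tau)$; the defining conditions~\eqref{eq:F2Cond} of $\FTwo$ (including $2$-nondegeneracy) hold since $g_w(0)>0$ and $\Delta(1,0;2,0)\neq0$ by inspection; and the normalization conditions~(i)--(vii) of~\autoref{proposition:NormalForm2Nondeg} are read off the $3$-jet. This yields $\NTwo=\{\MapOneParameterFamilyTwo{1}{\eps}\}\cup\{\MapOneParameterFamilyThree{2}{s}{\eps}:s\geq0\}\cup\{\MapOneParameterFamilyThree{3}{s}{\eps}:s\geq0\}$.

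For the final assertion I would return to the proof of~\autoref{proposition:NormalForm2Nondeg}. It determines the normalizing isotropies $(\sigma,\sigma')\in\Isotropies$ of a given $H\in\FTwo$ uniquely, with the single exception that when $f_{1w^2}(0)=0$, that is $s=0$, the circle parameter $u\in\UnitSphere$ stays free while the remaining standard parameters are recomputed from it. Hence if $H_1,H_2\in\NTwo$ satisfy $H_2=\sigma'\circ H_1\circ\sigma$ for some isotropies, then $(\sigma,\sigma')$ is a normalizing pair for $H_1$, so it is either trivial---in which case $H_1=H_2$---or, only if $s(H_1)=0$, one of the residual rotations. To exclude the latter producing a different normalized map, note that the assignment sending $u\in\UnitSphere$ to the correspondingly re-normalized image of $H_1$ is continuous, takes values in $\NTwo$, and keeps the value $f_{1w^2}(0)=0$; since by the first part $\{H\in\NTwo:s(H)=0\}=\{\MapOneParameterFamilyTwo{1}{\eps},\MapOneParameterFamilyThree{2}{0}{\eps},\MapOneParameterFamilyThree{3}{0}{\eps}\}$ is a three-point set and $\UnitSphere$ is connected, this assignment is constant, equal to $H_1$. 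Therefore isotropically equivalent maps in $\NTwo$ coincide.

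I anticipate the desingularization to be the principal obstacle: solving the polynomial compatibility system, recognizing that its solution set consists precisely of one point and two rational curves, and identifying $s=2f_{1w^2}(0)$ as the parameter rendering $\MapOneParameterFamilyThree{2}{s}{\eps}$ and $\MapOneParameterFamilyThree{3}{s}{\eps}$ in the clean rational form above. This is the computational core of the argument, carried out with~\cite{wolfram}; by contrast the jet parametrization is routine bookkeeping, and once the explicit list is in hand the inequivalence statement is a short uniqueness-of-normal-form argument.
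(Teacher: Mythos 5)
Your plan for the classification itself coincides with the paper's: a jet parametrization in the style of Baouendi--Ebenfelt--Rothschild and Lamel, driven by the $2$-nondegeneracy determinant $\Delta(1,0;2,0)\neq0$, followed by a computer-assisted resolution of the resulting polynomial compatibility system into one isolated map and two rational one-parameter families parametrized by $s=2f_{1w^2}(0)$. The paper organizes this slightly differently --- it parametrizes $H$ only along the second Segre set, $H(z,2\imu z\chi)=\Psi(z,\chi,j)$, and the bulk of the conditions on $j$ come from demanding that $\Psi(z,w/(2\imu z),j)$ be holomorphic at $z=0$ (that is what ``desingularization'' means there, rather than a reparametrization of the solution components), with the mapping equation used only at the end to fix the last parameters --- but this is bookkeeping, not substance, and you correctly locate the computational core. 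Where you genuinely diverge is the inequivalence statement. The paper (\autoref{theorem:NonIntersectingOrbits}) writes out $\sigma'\circ H_1\circ\sigma=H_2$ coefficient by coefficient, forces all standard parameters to be trivial except possibly $u\in\UnitSphere$ when $s=0$, and then invokes \autoref{cor:jetDetermination}. You instead note that $(\sigma,\sigma')$ must be a normalizing pair for $H_1$, that the proof of \autoref{proposition:NormalForm2Nondeg} determines every standard parameter uniquely except for the residual $u$-freedom when $f_{1w^2}(0)=0$, and you kill that freedom by connectedness: $u\mapsto\sigma'_u\circ H_1\circ\sigma_u$ is continuous on $\UnitSphere$, lands in the three-point set $\{\MapOneParameterFamilyTwo{1}{\eps},\MapOneParameterFamilyThree{2}{0}{\eps},\MapOneParameterFamilyThree{3}{0}{\eps}\}$, and equals $H_1$ at $u=1$, hence is constant. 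This is valid --- the needed uniqueness is implicit in the normalization proof, where each equation is solved uniquely for its parameter --- and it buys independence from the jet determination corollary; its cost is that you must make that uniqueness explicit and verify that the three $s=0$ maps are pairwise distinct so the target set is discrete. Either route closes the argument.
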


The family of mappings $\MapOneParameterFamilyThree{3}{s}{\eps}$ in \autoref{theorem:ReductionOneParameterFamilies2} is not of degree $3$ for each $s \geq 0$: If we set $\eps = -1$ and $s = 1/2$ in $\MapOneParameterFamilyThree{3}{s}{\eps}$  the denominator and the numerator of each component is divisible by $16 \imu - 8 \imu z + w$, resulting in a mapping of degree $2$, which coincides with $\MapOneParameterFamilyThree{2}{1/2}{-}$. The following lemma shows that this is the only possibility.

\begin{lemma}[label=lem:WhenDeg3IsDeg2]
The mapping $\MapOneParameterFamilyThree{3}{s}{\eps}$ from \autoref{theorem:ReductionOneParameterFamilies2} is of degree $2$ if and only if $\eps = -1$ and $s= 1/2$ in $\MapOneParameterFamilyThree{3}{s}{\eps}$.
\end{lemma}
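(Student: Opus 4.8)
The plan is to treat $\MapOneParameterFamilyThree{3}{s}{\eps}$ as a reduced rational map with numerator components $P_1, P_2, P_3$ and denominator $Q$ as written in \autoref{theorem:ReductionOneParameterFamilies2}, and to determine exactly when $P_1, P_2, P_3, Q$ acquire a common polynomial factor. Since $\deg Q = 3$ and the $P_k$ are of degree at most $3$, the map drops to degree $2$ precisely when all four polynomials are divisible by a common factor, and since the degree is already exactly $3$ for generic $(s,\eps)$ (one checks $Q$ has a genuine cubic term $\imu w^3$, and e.g. $P_2$ has the cubic term $64 z^3 + 256 s z^3 = 64(1+4s) z^3 \ne 0$ for $s \geq 0$), the common factor must be linear of the form $\ell = az + bw + c$ with $(a,b) \neq (0,0)$. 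So the first step is to reduce to the question: for which $s \geq 0$ and $\eps \in \{\pm 1\}$ do $P_1, P_2, P_3, Q$ share a common linear factor?

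Next I would set up the divisibility conditions directly. Writing $Q = \ell \cdot \tilde Q$ with $\tilde Q$ quadratic and matching coefficients gives a finite polynomial system in $a, b, c, s, \eps$ and the coefficients of $\tilde Q$; imposing the analogous factorizations of $P_1, P_2, P_3$ adds more equations. Rather than solving the full system, the cleaner route is: pick two of the four polynomials, say $Q$ and $P_2$, compute their resultant $\operatorname{Res}_z(Q, P_2)$ (a polynomial in $w$ whose coefficients are polynomials in $s, \eps$) — a common factor forces this resultant to vanish identically in $w$ — and extract the resulting constraints on $s$ and $\eps$. This should already cut the parameter set down to a small finite list; then one verifies by direct substitution which of those candidates actually produce a common linear factor of all four polynomials. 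The guess $\eps = -1$, $s = 1/2$ with $\ell = 16\imu - 8\imu z + w$ is to be confirmed by polynomial division, and the converse (this is the \emph{only} case) comes out of the resultant computation.

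For the case $\eps = +1$: here the hyperquadric is the sphere, and by \autoref{prop:transversality}(ii) together with the discussion around it, no nonconstant holomorphic map from $\HeisenbergOne{3}$ to itself... — more precisely, one can avoid a case-by-case resultant analysis by invoking that a degree-$2$ map in $\NTwo$ must coincide with one of $\MapOneParameterFamilyTwo{1}{\eps}$ or $\MapOneParameterFamilyThree{2}{s}{\eps}$ (the degree-$2$ members of $\NTwo$ enumerated in \autoref{theorem:ReductionOneParameterFamilies2}), and the last sentence of that theorem asserts distinct members of $\NTwo$ are not isotropically equivalent — in particular not equal. Thus $\MapOneParameterFamilyThree{3}{s}{\eps}$ of degree $2$ would have to literally equal some $\MapOneParameterFamilyThree{2}{s'}{\eps}$ or $\MapOneParameterFamilyTwo{1}{\eps}$; comparing low-order Taylor coefficients at $0$ (the normalization data recorded in \autoref{remark:SummaryJetNormalizedMapping}, e.g. the $z^3$, $z^2w$, $zw^2$ coefficients, which pin down $s$ and force $\eps$) isolates $\eps = -1$, $s = 1/2$ and simultaneously identifies the limit as $\MapOneParameterFamilyThree{2}{1/2}{-}$.

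The main obstacle I anticipate is purely computational: the resultant or coefficient-matching system is sizeable and the polynomials in $s$ are of moderate degree, so it is easy to lose a spurious branch or to mishandle the $\eps^2 = 1$ relation. This is exactly the kind of step the paper offloads to \textit{Mathematica}; the conceptual content — reduce to a common linear factor, then a resultant vanishing identically — is short, and the arithmetic should be presented as a verification rather than a derivation.
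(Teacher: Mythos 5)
Your core strategy is viable but genuinely different from the paper's. You work forwards from the explicit degree-$3$ representation of $\MapOneParameterFamilyThree{3}{s}{\eps}$ and ask when the three numerators and the denominator acquire a common (necessarily linear, as you correctly argue) factor, detecting this through a resultant. The paper instead works backwards: it writes down a general degree-$2$ rational map $H=(p_1,p_2,p_3)/q$ with $H(0)=0$ and $q(0)=1$, matches $j_0^3H$ with $j_0^3\MapOneParameterFamilyThree{3}{s}{\eps}$ to pin down all coefficients of $H$ in terms of $(s,\eps)$, and then observes that the single fourth-order coefficient $f_{1z^3w}(0)$ of the resulting $H$ agrees with that of $\MapOneParameterFamilyThree{3}{s}{\eps}$ only for $\eps=-1$, $s=1/2$. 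Both routes end in a machine computation; yours is intrinsic to the given polynomials and needs no ansatz, while the paper's avoids gcd's, resultants and degenerate leading coefficients altogether and stays in the currency of Taylor coefficients at $0$ used throughout Section 4.

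Three points need attention. First, your third paragraph does not work and should be dropped: knowing that $\MapOneParameterFamilyThree{3}{s}{\eps}$ has degree $2$ does not force it to \emph{equal} $\MapOneParameterFamilyTwo{1}{\eps}$ or some $\MapOneParameterFamilyThree{2}{s'}{\eps}$ --- the set $\NTwo$ is the union of the three families, so a degree-$2$ member of the third family is a priori just that, and the clause ``no map in $\NTwo$ is equivalent to a different map in $\NTwo$'' says nothing about a map compared with itself; moreover that clause is only proved later (\autoref{theorem:NonIntersectingOrbits}, via \autoref{cor:jetDetermination}, which cites the present lemma), so invoking it here would be circular. Second, the identical vanishing of $\mathrm{Res}_z(Q,P_k)$ only detects common factors of positive degree in $z$; a putative common factor $bw+c$ must be excluded separately (easy, since such a factor of $P_1$ would kill the term $64z^3$), and you should take $P_1$ rather than $P_2$ in the resultant, since the leading $z$-coefficient of $P_2$ is $256s$ and degenerates at $s=0$ while that of $P_1$ is the constant $64$. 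Third, a slip: $64z^3$ and $256sz^3$ are the cubic terms of $P_1$ and $P_2$ respectively, not of the same component. None of this changes the verdict that your main plan, carefully executed, would prove the lemma.
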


\begin{proof}
The necessary direction can be verified directly. The other direction is proved as follows: We let $H$ denote an arbitrary rational mapping of degree $2$ with $H(0) = 0$ defined in a sufficiently small neighborhood $U\subset \C^2$ of $0$. We require $H$ to be holomorphic in $U$. Then $H$ is of the form $H=(p_1,p_2,p_3)/q$, where for $1\leq j\leq 3$ the terms $p_j$ and $q$ are polynomials of degree $2$ given by $p_j(z,w) = a_j z + b_j w + c_j z^2 + d_j z w + e_j w^2$ and $q(z,w)  = 1+ a_4 z + b_4 w + c_4 z^2 + d_4 z w + e_4 w^2$, where each element of $\Lambda_m\coloneqq \{a_m,b_m,c_m,d_m,e_m\}$ is a complex number for $1\leq m \leq 4$. We denote by $\Lambda$ the collection of all $\Lambda_m$. If we compare the $3$-jets of $H$ and $\MapOneParameterFamilyThree{3}{s}{\eps}$ and solve for the elements of $\Lambda$ we obtain 
\begin{align*}
H(z,w) = \frac{\Bigl( 16 z - 16 s \eps z^2 + 5 \imu \eps z w + 4 s w^2, 16 z^2 - \eps w^2, w \bigl(16 - 16 \eps s z - 3 \imu \eps w\bigr)\Bigr)}  {16 + 16 \eps s z + 3 \imu \eps w + 8 \imu s z w + (1 + 8 \eps s^2) w^2}.
\end{align*}
Comparing the $f_{1 z^3 w}(0)$-coefficients of $H$ and $\MapOneParameterFamilyThree{3}{s}{\eps}$ we find a solution if and only if $\eps = -1$ and $s=1/2$. Then we observe with these choices the mapping $H$ coincides with $\MapOneParameterFamilyThree{3}{1/2}{-}$.
\end{proof}

The proof of \autoref{theorem:ReductionOneParameterFamilies2} is based on the following lemmas. First we state them and then we show how \autoref{theorem:ReductionOneParameterFamilies2} is deduced from these lemmas. Afterwards we provide the proofs of these lemmas. \\
In the first lemma we obtain a so-called \textit{jet parametrization} for $H \in \NTwo$ at $0$ along the second Segre set. In order to simplify our formulas we introduce the following notation:
\begin{align*}
A_{k \ell}\coloneqq f_{1 z^k w^{\ell}}(0),\quad B_{k\ell}\coloneqq f_{2 z^k w^{\ell}}(0),\quad C_{k\ell}\coloneqq g_{z^k w^{\ell}}(0), \quad D_{\ell}: = D_{0\ell}, 
\end{align*}
for $k, \ell \geq 0$ and $D \in\{A,B,C\}$. In the list of coefficients of a mapping $H \in \FTwo$ we gave in \autoref{remark:SummaryJetNormalizedMapping}, there are still some unknown coefficients belonging to $J_0^4$. These remaining coefficients we denote by  
\begin{align}
\label{eq:BasicIdentityUnknowns}
j \coloneqq \left(A_2, B_2, B_{21}, B_{12},A_3, B_3, C_3, A_{22},B_{22},C_{22},A_{13},B_{13}, C_{13},A_4,B_4,C_4\right).
\end{align}
We refer to the coefficients $D_{k\ell}$ we listed in \eqref{eq:BasicIdentityUnknowns} as components of $j$. We take $N_0 \coloneqq 16$ and define the following set:  
\begin{align}
\label{eq:BasicIdentityCollectionJets}
J \coloneqq \left\{ j \in \C^{N_0}: A_2 \geq 0, C_3 \in \R, B_{2 1} \in \imu \R \right\} \subset \C^{N_0}.
\end{align}
We consider $j$ from \eqref{eq:BasicIdentityUnknowns} as variable for $J \subset \C^{N_0}$. The following lemma is based on \cite[Proposition 25, Corollary 26--27]{lamel}.

\begin{lemma}[label=lemma:BasicIdentity2Nondeg]
Let $H \in \NTwo$. Then there exists an explicitly computable, rational mapping $\Psi$ satisfying
\begin{align}
\label{eq:BasicIdentity}
H(z,2 \imu z \chi)  = \Psi\bigl(z,\chi, j\bigr) 
\end{align} 
for all $(z,\chi) \in \C^2$ sufficiently near $0$. The formula for $\Psi$ is given in \hyperref[appendix:ParametrizationFormula]{Appendix A}, where we scaled $j \in J$ for simplification. 
\end{lemma}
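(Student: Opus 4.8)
The plan is to run the standard jet-parametrization machinery along Segre sets, specialized to the present situation where the target is the model hyperquadric $\HeisenbergTwo{\eps}{3}$ and the source is $\HeisenbergOne{2}$. First I would set up the complexified mapping equation \eqref{eq:MapEq} and restrict the free variables to the second Segre set: put $\tau = 0$ and $w = 2\imu z\chi$, which is exactly the parametrization appearing on the left-hand side of \eqref{eq:BasicIdentity}. Since the target quadric is Levi-nondegenerate and $H$ is $2$-nondegenerate and transversal at $0$ (both guaranteed because $H \in \NTwo \subset \FTwo$, using \autoref{def:F2} and \autoref{prop:transversality}), the results of Lamel cited just before the lemma — \cite[Proposition 25, Corollary 26--27]{lamel} — apply and yield that $H$ restricted to the second Segre set is determined by a finite jet of $H$ at $0$ through an explicit rational expression. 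So the conceptual content is essentially a citation; the work is in identifying \emph{which} jet and computing $\Psi$ explicitly.

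Concretely, I would proceed as follows. Differentiate \eqref{eq:MapEq} in $\chi$ and evaluate along $\tau = 0$: because $\bar g(\chi,0)$ and $\bar f_m(\chi,0)$ are holomorphic in $\chi$ alone while the left-hand arguments are $(z, 2\imu z\chi)$, successive $\partial_\chi$-derivatives at $\chi = 0$ express the Taylor coefficients of $\bar g, \bar f_1, \bar f_2$ in $\chi$ in terms of the coefficients of $g, f_1, f_2$. The $2$-nondegeneracy condition \eqref{eq:F2Cond}, i.e.\ $\Delta(1,0;2,0) \neq 0$, is precisely what makes the relevant linear system solvable: it lets one invert for $\bar f_{m\chi}(0)$ and $\bar f_{m\chi^2}(0)$ (equivalently, by conjugation, for enough of the antiholomorphic jet) in terms of the holomorphic $3$- and $4$-jet of $H$. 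Feeding these back into \eqref{eq:MapEq} evaluated along the second Segre set then solves for $g(z, 2\imu z\chi)$ and $f_m(z,2\imu z\chi)$ as rational functions of $(z,\chi)$ with coefficients depending only on the finite list of jet components, and after imposing the normalization conditions of \autoref{proposition:NormalForm2Nondeg} (which fix the low-order jet as recorded in \autoref{remark:SummaryJetNormalizedMapping}) the only surviving free parameters are exactly those in \eqref{eq:BasicIdentityUnknowns}, constrained to lie in the set $J$ of \eqref{eq:BasicIdentityCollectionJets}. The reality constraints $A_2 \geq 0$, $C_3 \in \R$, $B_{21} \in \imu\R$ defining $J$ come directly from normalization conditions (v), (vi), (vii) of \autoref{proposition:NormalForm2Nondeg}.

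The explicit formula for $\Psi$ is then obtained by carrying out this elimination symbolically; I would record it in \hyperref[appendix:ParametrizationFormula]{Appendix A}, rescaling the components of $j$ as indicated to clear denominators and shorten the expressions. The main obstacle is not conceptual but bookkeeping: one must track the dependence of the high-order antiholomorphic jet on the holomorphic jet through the inverse of a matrix built from $\Delta$-type expressions, and verify that no jet component of order higher than those in \eqref{eq:BasicIdentityUnknowns} enters — equivalently, that the recursion closes at the level dictated by $2$-nondegeneracy. This is where \autoref{rem:moreCoefficientsN} is used: it already isolates which order-$3$ coefficients survive normalization, and an analogous (routine but lengthy) check at order $4$ confirms that $j$ as defined in \eqref{eq:BasicIdentityUnknowns} is a complete list. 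Once that is in hand, \eqref{eq:BasicIdentity} holds by construction for $(z,\chi)$ near $0$, and the rationality of $\Psi$ is manifest from the elimination, since at each step one only divides by expressions that are nonzero at the origin by \eqref{eq:F2Cond}.
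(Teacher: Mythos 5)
Your overall strategy --- parametrize $H$ along the second Segre set via the reflection identity and the $2$-nondegeneracy, following Lamel --- is exactly the paper's, and your identification of the free parameters $j\in J$ and of the reality constraints coming from the normalization is correct. But the mechanism you describe for the key inversion step would not work as written. A single scalar identity, the restriction of \eqref{eq:MapEq} to $\tau=0$, cannot determine the three unknown functions $f_1,f_2,g$ along $\mathcal S^2_0$; one must first generate a $3\times 3$ system by applying the CR vector field $L=\partial/\partial\chi-2\imu z\,\partial/\partial\tau$ (tangent to the complexification, hence annihilating the unbarred $H$) zero, one and two times to $\rho'\bigl(H(Z),\bar H(\zeta)\bigr)$, producing $\Phi_1=\Phi_2=\Phi_3=0$ as in \eqref{eq:systemPsi}. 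What the $2$-nondegeneracy buys is the invertibility of the coefficient matrix of this system in the unknown $Z'=H(z,\tau+2\imu z\chi)$, whose determinant at $0$ equals $-\eps\neq 0$; it is not used to ``invert for $\bar f_{m\chi}(0)$ and $\bar f_{m\chi^2}(0)$'' --- those are already prescribed by the normalization. Naive $\partial_\chi$-differentiation of the restricted equation would in addition differentiate $H$ in its $w$-slot and introduce new unknowns, which is precisely what the use of $L$ avoids.

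Second, the output of that inversion, \eqref{eq:BasicIdentityEq1}, expresses $H$ on $\mathcal S^2_0$ in terms of the full $2$-jet of $\bar H$ along $\bar{\mathcal S}^1_0$, including the $\tau$-derivatives $\bar H_\tau(\chi,0)$ and $\bar H_{\tau^2}(\chi,0)$, since $L$ and $L^2$ involve $\partial/\partial\tau$; your proposal only accounts for the $\chi$-Taylor coefficients of the restriction to $\tau=0$. The paper closes the loop by applying $S=\partial/\partial w+\partial/\partial\tau$ and $S^2$ to \eqref{eq:BasicIdentityEq1} and evaluating at $\chi=\tau=0$, which yields $H_w(z,0)$ and $H_{w^2}(z,0)$ as rational functions of the finite jet $j$, and then conjugating. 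Without this second step the recursion does not close on $j$, and the asserted rational dependence of $\Psi$ on the finitely many coefficients in \eqref{eq:BasicIdentityUnknowns} is not established.
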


\begin{remark}
 In order to compute $\Psi$ in \Autoref{lemma:BasicIdentity2Nondeg} we only need to assume the nondegeneracy of $H$, but to simplify expressions we require $H \in \NTwo$. 
\end{remark}

The approach we take in the next lemmas follows the line of thought of \cite[Proposition 2.11--3.1,\S6]{BER97}. 

\begin{lemma}[label=lemma:Desingularization]
Let $H \in \NTwo$ and $\Psi$ be given as in \Autoref{lemma:BasicIdentity2Nondeg}. If $\psi(z,w) \coloneqq \Psi\bigl(z, w / (2 \imu z), j\bigr)$ is holomorphic for $(z,w) \in \C^2$ near $0$ and $j_0^4 \psi = j_0^4 H$, then $\psi \in \{\psi_1, \dots, \psi_5\}$ is of at most degree $3$ and depends on $A_2,B_2, B_{21}, A_{22},B_{22}$ and $C_{22}$ satisfying $A_2 \geq 0$ and $\re(B_{21})=0$, whenever these parameters are present in $\psi$. The concrete formulas for $(\psi_k)_{k=1,\ldots,5}$ are listed in \hyperref[appendix:FormulaPsi]{Appendix C}. 
\end{lemma}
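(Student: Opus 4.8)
\emph{Proof proposal.} The plan is to treat the jet parametrization of Lemma~\ref{lemma:BasicIdentity2Nondeg} as an identity in the free point $j$ of the set $J$ from \eqref{eq:BasicIdentityCollectionJets} and to convert the holomorphy of $\psi$ at the origin into an explicit polynomial system on $j$. Writing the rational map of \hyperref[appendix:ParametrizationFormula]{Appendix~A} as $\Psi(z,\chi,j)=P(z,\chi,j)/Q(z,\chi,j)$ with $P,Q$ polynomial and substituting $\chi=w/(2\imu z)$, I would clear the powers of $z$ that the substitution produces, obtaining $\psi(z,w)=\widehat P(z,w,j)/\widehat Q(z,w,j)$ with $\widehat P,\widehat Q$ polynomial in $(z,w)$. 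The germ $\psi$ is then a definite rational expression in $j$, and it is holomorphic at $0$ exactly when, after cancelling the greatest common factor of $\widehat P$ and $\widehat Q$, the reduced denominator does not vanish at $0$; equivalently, every coefficient of a strictly negative power of $z$ in the Laurent expansion of $\psi$ along $\{z=0\}$ must vanish. Each such coefficient is a polynomial in the components of $j$, which I collect into a finite system $E_1(j)=\dots=E_M(j)=0$.

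Next I would solve $E_1=\dots=E_M=0$ together with the relations defining $J$ (so $A_2\ge 0$, $C_3\in\R$, $\re B_{21}=0$) and with the self-consistency requirement $j_0^4\psi=j_0^4H$, which---since the low-order part of the jet is fixed as in Remark~\ref{remark:SummaryJetNormalizedMapping} while $j$ carries the remaining free coefficients of \eqref{eq:BasicIdentityUnknowns}---says that the $4$-jet read off from $\psi$ reproduces $j$ and therefore amounts to further polynomial identities among the components of $j$. This computation I would carry out with \textit{Mathematica}: the solution locus decomposes into finitely many branches, and on each branch every component of $j$ other than $A_2,B_2,B_{21},A_{22},B_{22},C_{22}$ becomes a rational function of those six (some branches involving only a subset of them). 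Substituting each solution back into $\widehat P/\widehat Q$ makes the common factor explicit, and after cancelling it the numerator and denominator have degree at most $3$; the resulting finitely many families are precisely $\psi_1,\dots,\psi_5$ of \hyperref[appendix:FormulaPsi]{Appendix~C}. The constraints that survive are $A_2\ge 0$ and $\re B_{21}=0$, restricted to whichever of the six parameters actually occur in a given $\psi_k$; the remaining $J$-condition $C_3\in\R$ holds automatically because $C_3$ is determined by the other data on every branch.

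Finally one should verify that the factor cancelled above is not identically zero for the admissible $j$---so that the reduced fraction genuinely represents $\psi$---and that $\psi_1,\dots,\psi_5$ already includes every lower-degree specialization, the collapse of $\MapOneParameterFamilyThree{3}{1/2}{-}$ to degree $2$ in Lemma~\ref{lem:WhenDeg3IsDeg2} being the model case. I expect the elimination to be the main obstacle: the system lives in sixteen complex unknowns carrying mixed real constraints, and a naive Gr\"obner-basis run will not terminate in a usable form, so a workable approach is to eliminate variables in a carefully chosen order, to separate the branches by hand from the first few equations, and to propagate the reality conditions separately---since complex conjugation does not commute with the formal substitution $\chi=w/(2\imu z)$, the conditions $A_2\ge 0$, $C_3\in\R$, $\re B_{21}=0$ cannot simply be read off $\Psi$ but must be tracked through the computation.
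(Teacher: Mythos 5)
Your proposal is correct and follows essentially the same route as the paper: expand $\Psi(z,\chi,j)$ in a Taylor series, translate holomorphy of $\Psi(z,w/(2\imu z),j)$ into the vanishing of the coefficients $\Psi_{k\ell}(j)$ with $\ell>k$, solve the resulting polynomial system by hand-separated case distinctions (the paper's tree of cases A and B in Appendix B), and then impose $j_0^4\psi=j_0^4H$ together with the reality/positivity constraints from $J$ to pin down the surviving parameters and obtain $\psi_1,\dots,\psi_5$. The only substantive content you leave implicit is the actual elimination, which the paper carries out explicitly starting from the coefficients $\Psi^3_{34},\Psi^1_{34},\Psi^3_{45},\Psi^1_{45}$ and branching at $\Psi^2_{34}=0$.
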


\begin{lemma}[label=lemma:OneParameterFamilies]
Let $U \subset \C^2$ be a sufficiently small neighborhood of $0$ and $\psi \in \{\psi_1, \dots, \psi_5\}$ from \Autoref{lemma:Desingularization} satisfies $\psi(U \cap \HeisenbergOne{2}) \subset \HeisenbergTwo{\eps}{3}$. Then $\psi \in \{\MapOneParameterFamilyTwo{1}{\eps}, \MapOneParameterFamilyThree{2}{s}{\eps},\MapOneParameterFamilyThree{3}{s}{\eps}\}$ from \Autoref{theorem:ReductionOneParameterFamilies2}, where $s \coloneqq A_2 \geq0$.
\end{lemma}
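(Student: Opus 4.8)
The plan is to take each of the five explicit mappings $\psi_1, \dots, \psi_5$ produced by \autoref{lemma:Desingularization} and impose the mapping equation $\psi(U \cap \HeisenbergOne 2) \subset \HeisenbergTwo{\eps}{3}$, which by complexification means
\begin{align*}
g(z, \tau + 2 \imu z \chi) - \bar g(\chi,\tau) = 2 \imu \bigl( f_1(z,\tau + 2 \imu z \chi) \bar f_1(\chi,\tau) + \eps f_2(z,\tau + 2 \imu z \chi) \bar f_2(\chi,\tau) \bigr)
\end{align*}
holds identically for $(z,\chi,\tau)$ near $0$. Since each $\psi_k$ is a rational map whose coefficients are polynomial expressions in the remaining free jet parameters $A_2, B_2, B_{21}, A_{22}, B_{22}, C_{22}$ (with $A_2 \geq 0$, $\re(B_{21}) = 0$), substituting into the complexified equation and clearing denominators yields a polynomial identity in $z, \chi, \tau$ whose coefficients must all vanish. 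Comparing these coefficients gives a system of polynomial equations in the free parameters; I would solve this system (this is where \textit{Mathematica} enters, as the authors indicate) and show that the only solutions correspond, after the scaling of $j$ fixed in Appendix A, to the three families $\MapOneParameterFamilyTwo 1 \eps$, $\MapOneParameterFamilyThree 2 s \eps$, $\MapOneParameterFamilyThree 3 s \eps$, with $s = A_2$.

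First I would organize the five cases according to which parameters actually occur in $\psi_k$: the degenerate branches (presumably those with fewer free parameters, e.g. the one with $A_2$ forced to vanish) should collapse quickly to $\MapOneParameterFamilyTwo 1 \eps$ or to degree-$2$ members of the $\MapOneParameterFamilyThree 2 s \eps$ family, while the generically degree-$3$ branch will produce $\MapOneParameterFamilyThree 3 s \eps$. In each case the strategy is the same: expand $\psi_k$ as a power series (or work with the reduced rational form directly), plug into the complexified mapping equation, and extract enough low-order coefficient relations to pin down $B_2, B_{21}, A_{22}, B_{22}, C_{22}$ as explicit functions of the single surviving parameter $s = A_2$. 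The normalization constraints $A_2 \geq 0$ and $\re(B_{21}) = 0$ from \autoref{lemma:Desingularization}, together with the sign and reality conditions already recorded in \autoref{remark:SummaryJetNormalizedMapping}, should be exactly what is needed to eliminate spurious algebraic solutions and to fix the remaining phase ambiguities.

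The main obstacle is the sheer size of the polynomial system obtained after clearing denominators in the complexified mapping equation for the degree-$3$ branch: the numerator after substitution is a polynomial in three variables $(z,\chi,\tau)$ of fairly high degree, with coefficients that are themselves polynomials of high degree in six complex parameters, so the Gröbner-basis / elimination computation is heavy and the case analysis (which solution branch is which, whether components of a tuple share common factors so that the reduced degree drops) must be done carefully. A secondary subtlety is bookkeeping around the scaling of $j$ mentioned in Appendix A and around the identity $\psi(z,w) = \Psi(z, w/(2\imu z), j)$: one must check that the putative holomorphic extension $\psi_k$ really does satisfy $j_0^4 \psi_k = j_0^4 H$ so that it lies in $\NTwo$, and that no member of the resulting families has been counted twice — but the last-sentence assertion of \autoref{theorem:ReductionOneParameterFamilies2}, that distinct maps in $\NTwo$ are not isotropically equivalent, is deferred and not needed here. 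Once the parameter substitutions are made, matching the resulting rational maps to the displayed formulas for $\MapOneParameterFamilyTwo 1 \eps$, $\MapOneParameterFamilyThree 2 s \eps$, $\MapOneParameterFamilyThree 3 s \eps$ is a routine (if lengthy) verification.
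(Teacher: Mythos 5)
Your proposal matches the paper's proof: the paper likewise substitutes each $\psi_k$ into the complexified mapping equation \eqref{eq:MapEq}, compares coefficients of specific monomials $z^k\chi^\ell\tau^m$ to solve for the remaining parameters $B_2, B_{21}, A_{22}, B_{22}, C_{22}$ in terms of $s=A_2$, and uses $A_2\geq 0$ and $\re(B_{21})=0$ to discard spurious branches (the only extra wrinkle being that for $\psi_4$ a residual phase $e^{\imu t}$ in $B_2$ is removed by composing with diagonal isotropies that preserve the normalization). The approach and the case organization are essentially identical.
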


Next we describe how to prove \autoref{theorem:ReductionOneParameterFamilies2} from the previously stated lemmas.

\begin{proof}[Proof of \autoref{theorem:ReductionOneParameterFamilies2}]
Let $H \in \NTwo$ and $U \subset \C^2$ be a sufficiently small neighborhood of $0$. We write $\rho(z,w,\chi,\tau) \coloneqq w - \tau - 2 \imu z \chi$ for a defining function of the complexification of $\HeisenbergOne{2}$. The second Segre set $\mathcal S^2_0$ of $\HeisenbergOne{2}$ at $0$ is given as the image of $v^2_0(z,\chi) \coloneqq (z, 2 \imu z \chi)$, for $(z,\chi) \in U$. Since $v^2_0$ is of rank $2$ outside of the complex variety $X \coloneqq \big\{(z,\chi) \in U: z=0 \big\}$ in $\C^2$, it follows that $\mathcal S^2_0$ contains an open set $V \subset U\setminus X$ of $\C^2$. From \Autoref{lemma:BasicIdentity2Nondeg} we know after scaling the variable $j \in J$ from \eqref{eq:BasicIdentityUnknowns}, that
\begin{align}
\label{eq:PsiTaylor}
H\bigl(v^2_0(z,\chi) \bigr)=\Psi(z,\chi,j) = \sum_{k,\ell} \Psi_{k\ell}(j ) z^k \chi^{\ell}, 
\end{align}
holds, where we have written $\Psi$ in the Taylor expansion with coefficients $\Psi_{k\ell}(j) \in \C^3$ depending on $j \in J$. Then for $(z,w) \in V$ we have:
\begin{align}
\label{eq:NonHolomorphic}
H(z,w) = H\left(v^2_0\left(z, \frac w {2 \imu z}\right) \right) = \Psi \left(z, \frac w {2 \imu z},j \right) = \sum_{\alpha,\beta} \widehat \Psi_{\alpha\beta}(j) z^{\alpha} w^{\beta},
\end{align}
where $\widehat \Psi_{\alpha\beta} (j) \in \C^3$. The left-hand side of \eqref{eq:NonHolomorphic} is required to be holomorphic in a neighborhood of $0$, thus \eqref{eq:NonHolomorphic} yields equations $\widehat \Psi_{\alpha \beta}(j) = 0$ for $\alpha < 0$ or equivalently we obtain $\Psi_{k\ell}(j) = 0$ for $\ell > k$ by \eqref{eq:PsiTaylor}. We examine these equations for $j$ in the proof of \Autoref{lemma:Desingularization} to end up with $\Psi \bigl(z,w/(2 \imu z), j \bigr)$ being one of $5$ holomorphic mappings $\widehat \psi_1(z,w), \ldots, \widehat \psi_5(z,w)$, defined in a neighborhood of $0$ and given in \hyperref[appendix:FormulaPsi]{Appendix C}. Moreover \eqref{eq:NonHolomorphic} can only hold if $j_0^4H(z,w) = j_0^4 \Psi\bigl(z, w / (2 \imu z), j \bigr)= j_0^4 \widehat \psi_k(z,w)$ for each $1 \leq k \leq 5$. We carry out these computations in the last part of the proof of \Autoref{lemma:Desingularization}, which yield $H$ being one of the holomorphic mappings $\psi_1, \ldots, \psi_5$ according to \autoref{lemma:Desingularization} listed in \hyperref[appendix:FormulaPsi]{Appendix C}. Since we require $H$ being a mapping of $\HeisenbergOne{2}$ to $\HeisenbergTwo{\eps}{3}$ and $j$ was an arbitrary variable in $J$ so far, we have to ensure $\psi_k$ sends $\HeisenbergOne{2}$ to $\HeisenbergTwo{\eps}{3}$ for $1 \leq k \leq 5$. This last step is carried out in \Autoref{lemma:OneParameterFamilies} and we end up with the mappings $\MapOneParameterFamilyTwo{1}{\eps}, \MapOneParameterFamilyThree{2}{s}{\eps}$ and $\MapOneParameterFamilyThree{3}{s}{\eps}$ as claimed in \autoref{theorem:ReductionOneParameterFamilies2}, where $s = 2 f_{1 w^2}(0)$. The last claim, that the maps we listed in \autoref{theorem:ReductionOneParameterFamilies2} are not isotropically equivalent is proved in \autoref{theorem:NonIntersectingOrbits} below.
\end{proof}

\subsection{Jet Parametrization}

\begin{proof}[Proof of \autoref{lemma:BasicIdentity2Nondeg}]
We need to carry out the following steps: From the mapping equation we can determine $H$ along the germ of the second Segre set $\mathcal S^2_0$ of $\HeisenbergOne{2}$ near $0$ in terms of the $2$-jet of $H$ evaluated along the germ of the conjugated version of the first Segre set $\bar{\mathcal S}^1_0 =\{(\chi, 0): \chi \in \C\}$ of $\HeisenbergOne{2}$ near $0$. In a similar way we obtain formulas for the $2$-jet of $H$ along $\mathcal S^1_0$ depending on $j\in J$. In both steps it is essential to assume $2$-nondegeneracy. The resulting representation of $H$ gives the desired mappings $\Psi$ depending on $j$. Now we give the detailed version of the proof.\\
We let $\rho'$ be a defining function for $\HeisenbergTwo{\eps}{3}$ and $L \coloneqq \frac{\partial}{\partial \chi} - 2 \imu z  \frac{\partial}{\partial \tau}$ a basis of CR-vector fields of the complexification of $\HeisenbergOne{2}$. We compute $\Phi_{r+1}(z,w,\chi,\tau)\coloneqq L^r \rho' \bigl(H(z,w),\bar H(\chi,\tau) \bigr)$ for $ 0 \leq r \leq 2$ to obtain
\begin{align}
\label{eq:systemPsi}
\nonumber
\Phi_1(z,w,\chi,\tau) \coloneqq & ~g(z,w) - \bar g(\chi,\tau) - 2 \imu (f_1(z,w) \bar f_1(\chi,\tau) + \eps f_2(z,w) \bar f_2(\chi,\tau) ),\\
\Phi_2(z,w,\chi,\tau)  \coloneqq & - L \bar g(\chi,\tau)  - 2 \imu \Bigl(f_1(z,w) L \bar f_1 (\chi,\tau) + \eps f_2(z,w) L \bar f_2(\chi,\tau)\Bigr), \\
\nonumber
\Phi_3(z,w,\chi,\tau): = & -L^2 \bar g(\chi,\tau)  - 2 \imu \Bigl(f_1(z,w) L^2 \bar f_1(\chi,\tau) + \eps f_2(z,w) L^2 \bar f_2(\chi,\tau)\Bigr).
\end{align}
We introduce the following variables for expressions which occur in $\Phi_j$ for $1 \leq j \leq 3$:
\begin{align*}
(Z,\zeta) & \coloneqq (z,w,\chi,\tau), \quad  (Z', \zeta') \coloneqq \bigl(H(z,w), \bar H(\chi,\tau) \bigr), \quad
 W  \coloneqq \left(\frac{\partial^{|\beta|}} {\partial \zeta^{\beta}} \bar H(\chi,\tau) \right)_{1 \leq |\beta| \leq 2}.
\end{align*}
By a slight abuse of notation we obtain $\Phi_j(Z,\zeta,Z',\zeta',W) = 0$ for $1 \leq j \leq 3$ when restricted to $\HeisenbergOne{2}$, i.e., setting $Z = (z, \tau + 2 \imu z \chi)$. Further if we write $\Phi \coloneqq (\Phi_1,\Phi_2,\Phi_3)$ we have 
\begin{align*}
\det \left(\frac{\partial \Phi}{\partial Z'}(0)\right) = \eps \Bigl(\bar f_{2 \chi}(0) \bar f_{1 \chi^2}(0) - \bar f_{1 \chi}(0) \bar f_{2 \chi^2}(0)\Bigr) = -\eps \neq 0,
\end{align*}
since we assumed $H \in \NTwo \subset \FTwo$. Hence we can explicitly solve the system given in $\eqref{eq:systemPsi}$ for $Z'$ near $0$ as follows. First we denote 
\begin{align*}
B(z,\chi,\tau) \coloneqq \left(
\begin{array}{ccc}
\bar f_1(\chi,\tau) &\eps  \bar f_2(\chi,\tau) & -\frac{\imu} 2 \\
L \bar f_1(\chi,\tau)   &\eps L \bar f_2(\chi,\tau) &  0\\
L^2 \bar f_1(\chi,\tau)  &\eps L^2\bar f_2(\chi,\tau)  &  0
\end{array}
\right),
\end{align*}
then we obtain for all $(z,\chi,\tau) \in \C^3$ near $0$ the following identity
\begin{align}
\label{eq:BasicIdentityEq1}
H(z,\tau + 2 \imu z \chi)=
-\frac 1 {2 \imu} B^{-1}(z,\chi,\tau) ~^t(\bar g(\chi,\tau),L \bar g(\chi,\tau),L^2 \bar g(\chi,\tau)).
\end{align}
If we evaluate $\eqref{eq:BasicIdentityEq1}$ at $\tau = 0$ we obtain a formula for $H$ along $\mathcal S^2_0$ depending on the $2$-jet of $\bar H$ along $\bar{\mathcal S}^1_0$. So to finish our computations we need to find formulas for $j^2_{(\chi,0)}\bar H$. 
To this end we introduce the vector field $S$ tangent to $\HeisenbergOne{2}$ defined as $S \coloneqq \partial / \partial w + \partial / \partial \tau$, such that $S^{k} H(z,\tau + 2 \imu z \chi) = H_{w^k}(z,\tau + 2 \imu z \chi)$ for $k \in \N$. Applying $S$ and $S^2$  to $\eqref{eq:BasicIdentityEq1}$ and setting $\chi=\tau = 0$ we obtain formulas for $H_w(z,0)$ and $H_{w^2}(z,0)$ respectively, which are rational and depend on $j \in J$. After conjugating these expressions we obtain the components of $j^2_{(\chi,0)}\bar H$ as rational functions of $j$. The resulting mapping is denoted by $\Psi$ and depends on $j \in J$. In order to get rid of powers of $2$ in formulas we scale $j$ as follows: 
\begin{align*}
&(A_2, B_2, B_{12},A_3, B_3, C_3, A_{22}, B_{22}, A_{13}, B_{13}, C_{13} , A_4, B_4, C_4) \mapsto\\
\nonumber
  & \left(\frac{A_2} 2, \frac{B_2} 2, \frac{B_{12}} 4,  \frac{A_3} 4, \frac{B_3} 4, \frac{C_3} 2, \frac{A_{22}} 2, \frac{B_{22}} 2, \frac{A_{13}} 8, \frac{B_{1 3}} 8, \frac{C_{13}} 4, \frac{A_4} 8, \frac{B_4} 8, \frac{C_4} 4 \right).
\end{align*}
The numerator of the components of $H$ are polynomials of highest degree $(3,8)$ in $(z,\chi)$ and are homogeneous in $z$. The components of $H$ have the same denominator, which is a polynomial of highest degree $(3,9)$ in $(z,\chi)$. The complete expression is listed in \hyperref[appendix:ParametrizationFormula]{Appendix A}.
\end{proof}

\subsection{Desingularization}

We introduce the following relation:

\begin{definition}
\label{definition:specialCase}
For $J_1, J_2\subset J$ from \eqref{eq:BasicIdentityCollectionJets} we denote variables $j_1 \in J_1$ and $j_2 \in J_2$ as in \eqref{eq:BasicIdentityUnknowns} respectively. We set $\Psi_1(z,\chi)\coloneqq \Psi(z,\chi,j_1)$ and $\Psi_2(z,\chi) \coloneqq \Psi(z,\chi,j_2)$, where $\Psi$ is given in \autoref{lemma:BasicIdentity2Nondeg}. We say that $\Psi_1$ is a \textit{special case} of $\Psi_2$, if $J_1 \subset J_2$.
\end{definition}
More geometrically this means that the variety given by the defining equations for $\Psi_1$ is contained in the variety generated by the defining equations for $\Psi_2$. 

\begin{proof}[Proof of \autoref{lemma:Desingularization}]
As described in the proof of \autoref{theorem:ReductionOneParameterFamilies2}, in \eqref{eq:PsiTaylor} we expand the mapping $\Psi\bigl(z,\chi, j\bigr)$ from \eqref{eq:BasicIdentity} into a power series $\Psi(z,\chi,j) = \sum_{k,\ell} \Psi_{k\ell} (j) z^k \chi^{\ell}$ around $0$. For the components we write $\Psi_{k\ell}(j) = \Bigl(\Psi^1_{k\ell}(j), \Psi^2_{k\ell}(j),\Psi^3_{ k\ell} (j) \Bigr)$ and we set
\begin{align}
\label{eq:PsiEqual0}
\Psi_{k\ell} (j) = 0,\qquad  \forall ~\ell > k,
\end{align}
according to \eqref{eq:PsiTaylor}. These equations allow us to obtain conditions for $j \in J$. Each solution of an equation from \eqref{eq:PsiEqual0} corresponds to considering maps as in \eqref{eq:BasicIdentity}, but instead $j \in J$ we have $j \in J'$, where $J'$ is a subvariety of $J$. This means that we gradually restrict the space of possible mappings in $\FTwo$. In the following we describe which coefficients $\Psi_{k \ell}$ we consider and which components of $j$ we can eliminate from equations given as in \eqref{eq:PsiEqual0}. \\
We start considering $\Psi^3_{34} = \Psi^1_{34} = \Psi^3_{45} = \Psi^1_{45}=0$, which determine the following components of $j$:
\begin{align*}
A_3 = ~& \frac{\imu}{2}\Bigl(6 A_2^3 + 3 \eps B_{12} - A_2 \bigl(6 B_2 + \eps (-3 + C_3) \bigr) \Bigr)\\
B_3 =~ & \frac{\eps}{10} \bigl(-18 \imu \eps A_2^4 + 15 \imu A_2  B_{12} - 2 B_2 \bigl(9 \eps B_{21} - 4 \imu (3 - C_3) \bigr) \\
	& \quad  - 3 \imu A_2^2 (3 - 6 \eps B_2 + 6 \imu \eps B_{21}-  C_3) \Bigr)\\
A_4 = ~& \frac{\eps}{5} \Bigl(-324 \eps A_2^5 - 15 A_2^2 (-\eps A_{22}+ 2 B_{12} + \eps C_{13}) +  5 \bigl(-3 \eps A_{22} B_2 + \imu B_{13}   \\
	& \quad + B_{12} \bigl(-6 \imu B_{21}  + \eps (-6  + C_3) \bigr) +  3 \eps B_2 C_{13}\bigr) + A_2 \bigl(-5 \imu A_{13} + 30 \imu B_{21} + 10 \imu B_{21} C_3\\
	& \quad  - 5 \eps (6 B_{21}^2 + (-5 + C_3) C_3) + 5 \imu C_4 +  3 B_2(44 + 48 \imu \eps B_{21} - 18 C_3 + 15 \imu C_{22}) \bigr)  \\
	& \quad + 3 A_2^3 (-34 + 108 \eps B_2 - 28 \imu \eps B_{21} + 28 C_3 - 15 \imu C_{22}) \Bigr) \\
\end{align*}
\begin{align*}
B_4 = ~ & \frac{\eps}{20} \Bigl(3060 \eps A_2^6- 45 \eps B_{12}^2 + 2 B_2 \bigl(-40 \imu A_{13} + 102 \eps B_{21}^2 + 5 \imu B_{21} (-33 + 23 C_3) \\
	& \quad + \eps (-42 -  30 B_{22} + 78 C_3 - 28 C_3^2) + 40 \imu C_4\bigr) + 180A_2^3 (-\eps A_{22} + B_{12} + \eps C_{13}) \\
	& \quad + 20 A_2 \bigl(9 \eps A_{22} B_2 + \imu B_{13}+ 6 \imu B_{12} B_{21} +  2 \eps B_{12} C_3 - 3 B_2 (B_{12} + 3 \eps C_{13})\bigr)\\
	& \quad + A_2^2 \bigl(60 \imu A_{13} + 900 \eps  B_2^2 + 150 \imu B_{21} - 290 \imu B_{21} C_3 + \eps (9 - 24 B_{21}^2 +  60 B_{22} \\
	& \quad  - 106 C_3 + 61 C_3^2) - 60  \imu C_4 + 12 B_2 (-79 - 117 \imu \eps B_{21} + 63 C_3 - 65 \imu C_{22})\bigr) \\
	& \quad  - 12 A_2^4 (-69 +  330 \eps B_2 -  97 \imu \eps B_{21} + 73 C_3 - 45 \imu C_{22}) + 240 \imu B_2^2 C_{22} \Bigr)
\end{align*}
Then we consider $\Psi^2_{34}=0$ to obtain two cases, either 
\begin{compactitem}
\item[\rm{(i)}] Case A: \quad $B_{12} = \frac{2 \eps A_2} {5}\Bigl(6 A_2^2 + 5 B_2 + 6 \imu B_{21} + \eps (3 - C_3)\Bigr)$, \quad or
\item[\rm{(ii)}] Case B: \quad $B_2 = A_2^2$.
\end{compactitem}
Next we assume one of the expressions for $B_{12}$ or $B_2$ respectively for $\Psi$ and consider another equation from \eqref{eq:PsiEqual0} in order to solve for further components of $j$ in terms of the remaining elements. It turns out that each of the remaining equations of the system given in \eqref{eq:PsiEqual0} has more than one possible solution, resulting in case distinctions. In \hyperref[appendix:CaseAandB]{Appendix B} we give two diagrams of this elimination process for \hyperref[figure:CaseA]{case A} and \hyperref[figure:CaseB]{case B} respectively. In these diagrams we keep track of all the equations $\Psi_{k\ell}(j)=0$ we consider, which components of $j$ we are able to determine and which holomorphic expressions we obtain in the end. Now we describe the diagrams in a more detailed way:\\
Let us write $\gamma \coloneqq (A_2,C_3, B_{21},C_4, A_{13},B_{13}, C_{13},A_{22},B_{22},C_{22})$. In case A $\Psi$ still depends on the variables $\gamma$ and $B_2$ and in case B $\Psi$ depends on the variables $\gamma$ and $B_{12}$. Since both cases are treated in the same way we write $\Lambda$ for the set of the remaining variables in $\Psi$ with components denoted by $(D_1, \ldots, D_{11})$.\\
Inductively we consider equations $\Psi^j_{k\ell} =0$ to determine further variables $D_{m_1}, \ldots, D_{m_n} \in \Lambda$, where $1\leq m_j \leq 11$ for $1\leq j \leq n$. Each variable $D_{m_j}$ which we solved for corresponds to a case $E_{r {s_i}}$. It turns out  that we have $0 \leq r \leq 7$ and $1 \leq s_i \leq 13$, where $r=0$ corresponds to the starting node from case A or B. The notation for $E_{r {s_i}}$ is chosen such that the first index $r$ indicates the number of nodes one has to pass in order to get from the starting node, i.e., case A or case B from above, to $E_{r {s_i}}$. \\
Let us denote by $E$ some already achieved case, starting with case A or case B. In the diagram such an induction step is displayed as in the following \Autoref{figure:GeneralStep}:
\begin{figure}[H]
\begin{center}
\begin{tikzpicture}
\draw 
(1.5,0) node[text width= 1.5cm,align=center,rectangle,line width = 0.5pt,draw] (E) {$E$}
(4,0) node[fill=grayA,ellipse,line width = 0.5pt,draw] (Hkl) {$\Psi_{k\ell}=0$}
(6.7,1) node[text width= 2cm,align=center,rectangle,line width = 0.5pt,draw] (E1) {$E_{r {s_1}}$ \\ $D_{m_1} = \ldots$}
(6.7,-1) node[text width= 2cm,align=center,rectangle,line width = 0.5pt,draw] (E3) {$E_{r {s_n}}$ \\ $D_{m_n} = \ldots$};
\draw[->,line width=1pt]  (E) -- (Hkl);
\draw[->,line width=1pt]  (Hkl) -- (E1);
\draw[->,line width=1pt]  (Hkl) -- (5.7,0);
\draw[line width=1pt,dotted]  (6.7,0.3) -- (6.7,-0.3);
\draw[->,line width=1pt]  (Hkl) -- (E3);
\end{tikzpicture}
\caption{Diagram for new cases}
\label{figure:GeneralStep}
\end{center} 
\end{figure}
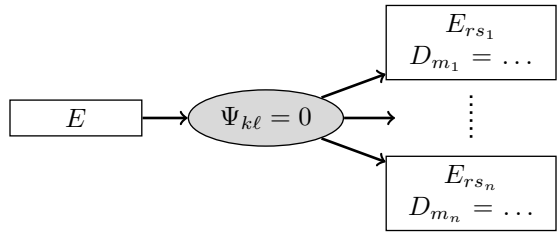
Now we take all parameters from the preceding cases of $E_{r {s_i}}$, plug them into $\Psi$ and denote the resulting rational mapping by $\varphi(z,\chi)$. Then we have several possibilities: 
\begin{compactitem}
\item[\rm{(i)}] If $\varphi(z,\frac w {2 \imu z})$ is holomorphic near $0$ we do not consider further equations. Then we have the possibility that $\varphi$ is a special case of a holomorphic mapping $\varphi'$ from some other case, which is indicated in \autoref{figure:SpecialCasesHolomorphic} or $\varphi$ is not a special case of any of the occurring mappings in the diagrams, which is indicated in \Autoref{figure:NewMap}.
\begin{figure}[H]
\begin{center}
\begin{tikzpicture}
\draw 
(1.5,0) node[text width= 2cm,align=center,rectangle,line width = 0.5pt,draw] (E) {$E_{r {s_i}}$ \\ $D_{k_m} = \ldots$}  
(4,0) node[diamond,line width = 0.5pt,draw] (psi) {$\varphi'$};
\draw[->,line width=1pt]  (-0.5,0) -- (E);
\draw[->,densely dotted,line width=1pt]  (E) -- (psi);
\end{tikzpicture}
\caption{Diagram for special cases of holomorphic maps}
\label{figure:SpecialCasesHolomorphic}
\end{center}
\end{figure}
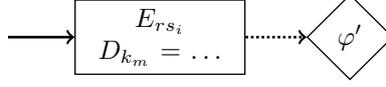 
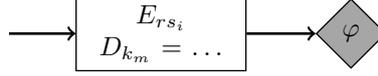
\begin{figure}[H]
\begin{center}
\begin{tikzpicture} 
\draw 
(1.5,0) node[text width= 2cm,align=center,rectangle,line width = 0.5pt,draw] (E) {$E_{r {s_i}}$ \\ $D_{k_m} = \ldots$}  
(4,0) node[fill=grayB,diamond,line width = 0.5pt,draw] (phi) {$\varphi$};
\draw[->,line width=1pt]  (-0.5,0) -- (E);
\draw[->,line width=1pt]  (E) -- (phi);
\end{tikzpicture}
\caption{Diagram for new holomorphic maps}
\label{figure:NewMap}
\end{center}
\end{figure} 
\item[\rm{(ii)}]  If $\varphi(z,\frac w {2 \imu z})$ is not holomorphic, we either proceed with another induction step as shown in \Autoref{figure:GeneralStep} or we recognize that the mapping $\varphi$ is a special case of a mapping $\varphi''$ from some case $E_{r'' {s''_i}}$. We indicate this situation as $E_{r {s_i}} \subset E_{r'' {s''_i}}$, which is shown in the following \Autoref{figure:SpecialCases}.
\begin{figure}[H]
\begin{center}
\begin{tikzpicture}
\draw (1.5,0) node[text width= 3cm,align=center,rectangle,line width = 0.5pt,draw] (E) {$E_{r {s_i}} \subset E_{r'' {s''_i}}$ \\ $D_{k_m} = \ldots$} ;
\draw[->,line width=1pt]  (-1,0) -- (E);
\end{tikzpicture}
\caption{Diagram for special cases of maps}
\label{figure:SpecialCases}
\end{center}
\end{figure}
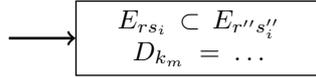 
\end{compactitem}
The complete case distinction is carried out in \hyperref[appendix:CaseAandB]{Appendix B}, where we denote the cases $E_{r s_i}$ by ``$Ars_i$'' and ``$Brs_i$'' for case $A$ and $B$ respectively. As mentioned above after at most $7$ steps the process terminates, which means, that after setting $\chi = \frac w {2 \imu z}$ in $\Psi(z,\chi,j)$ we obtain a holomorphic expression. It turns out that we obtain $5$ rational, holomorphic mappings, which we denote by $\widehat \psi_k(z,w)$ for $1 \leq k\leq 5$, as can be seen in the diagrams and is indicated in \Autoref{figure:NewMap}. We point out that these mappings include all $H \in \NTwo$ by construction. The formulas for $\widehat \psi_k$ are given in \hyperref[appendix:FormulaPsi]{Appendix C}. \\
We write $\widehat \psi_k = (\widehat \psi_k^1, {\widehat \psi_k}^2,\widehat \psi_k^3)$ and proceed by verifying $j_0^4 \widehat \psi_k = j_0^4 H$. Whenever we have expressed one component of $j$ in terms of the remaining components we use this expression for the subsequent equations.\\
First we treat $\widehat \psi_1$ and consider $\widehat \psi_{1w^3}^3(0) = C_3 / 2$ to obtain $C_3 = 3(1 + 2 \imu \eps B_{21})$. Next we consider $\widehat \psi_{1z^2w^2}^1(0) = A_{22} / 2$ to get $C_{13} = 3 A_{22} / 2$. Then we inspect $\widehat \psi_{1z^2 w^2}^2(0) = \frac{B_{22}} 2$ which gives $
C_4 = A_{13} + 18 B_{21} - 6 \imu \eps (1 - B_{21}^2)$. Verifying the normalization conditions we obtain $\re(B_{21})=0$ and we end up with the mapping $\psi_1$ as claimed, which still depends on $B_{21},A_{22},B_{22}$ and $C_{22}$ and is given in \hyperref[appendix:FormulaPsi]{Appendix C}. For $\widehat \psi_2$ we start with considering $\widehat \psi_{2 z w^2}^1(0) =\frac{A_{13}} 8$ to obtain
\begin{align*}
A_{13} =-10 B_{21} + \imu \eps (4 + B_{22}) + C_4 - 2 \imu \eps A_2 (A_{22} - C_{13}) + 2 A_2^2 (6 \imu - C_{22}),
\end{align*}
such that $\widehat \psi_2$ is independent of $B_{22}$ and $C_4$. Then we compute $\widehat \psi_{2 z w^3}^3(0) = \frac{C_{13}} 4$ to get
\begin{align*}
C_{13} = \frac 3 2 \Bigl(A_{22} + A_2 \bigl(2 \imu B_{21} + \eps (4 - \imu C_{22})\bigr)\Bigr).
\end{align*}
The rest of the coefficients are already in the correct form and the normalization conditions give $A_2 \geq 0$ and $\re(B_{21})=0$. The resulting mapping is denoted by $\psi_2$, depends on $A_2,B_{21},A_{22}$ and $C_{22}$ and is given in \hyperref[appendix:FormulaPsi]{Appendix C}. The maps $\widehat \psi_k$ for $k=3,4,5$ already satisfy $j_0^4 \widehat \psi_k = j_0^4H$ and by verifying the normalization conditions we obtain for $k=3,5$ that $A_2 \geq 0$ and additionally for $k=3$ that $\re(B_{21})=0$. Finally we denote $\psi_k = \widehat\psi_k$ for $k=3,4,5$. The mapping $\psi_3$ depends on $A_2,B_2$ and $B_{21}$, $\psi_4$ on $B_2$ and $C_{22}$ and $\psi_5$ depends on $A_2$ and $C_{22}$. All these mappings are listed in \hyperref[appendix:FormulaPsi]{Appendix C}.
\end{proof}

\subsection{Reduction to One-Parameter-Families of Mappings}

\begin{proof}[Proof of \autoref{lemma:OneParameterFamilies}]
We plug $\psi_k$ into the complexified version of the mapping equation \eqref{eq:MapEq} and compare coefficients with respect to $z,\chi$ and $\tau$. We list the monomials $z^k \chi^{\ell} \tau^m$ we consider in the mapping equation and for which of the remaining coefficients of $H$ in $\psi_k$ we are able to solve. Whenever $B_{21}$ is present in $\psi_k$ we write $B_{21} = \imu b_{21}$, where $b_{21} \in \R$. Moreover we recall that $A_2 \geq 0$.\\
We start with $\psi_1$ in which we have the terms $b_{21}, A_{22}, B_{22}$ and $C_{22}$. The coefficient of $\chi^2 \tau^2$ yields $C_{22} =0$ and $\chi \tau^3$ gives $A_{22} =0$. We write $B_{22} = \re(B_{22}) + \imu \im(B_{22})$ to get from $\tau^4$ that $\re(B_{22}) = 2 (1 - 3 \eps b_{21} + b_{21}^2)$.
The coefficient of $\tau^6$gives the following equation $\im(B_{22})^2 + 4 b_{21}^2 (1 - 2 \eps b_{21})^2 = 0$. Thus $\im(B_{22})=0$ and either $b_{21} = 0$ or $b_{21} = \eps/2$. The first case $b_{21} = 0$ results in $\MapOneParameterFamilyTwo{1}{\eps}$ and from the second case if $b_{21} = \eps/2$ we obtain $\MapOneParameterFamilyThree{2}{0}{\eps}$.\\
Next, we insert $\psi_2$ into $\eqref{eq:MapEq}$, which depends on $A_2,b_{21}, A_{22}$ and $C_{22}$. The coefficient of $\chi^2 \tau^2$ gives $C_{22} = 2 \imu \eps A_2^2$ and the coefficient of $z \chi^2 \tau^2$ shows $A_{22}= 2 (5 \imu A_2 b_{21} + 3 A_2^3)$. The coefficient of $\tau^4$ yields two cases: Either $b_{21}= - \frac {3 A_2^2} 2$ or $b_{21}=\frac{ \eps + A_2^2} 2$.

Assuming the first case $b_{21}= - \frac {3 A_2^2} 2$, we obtain from the coefficient of $\tau^6$ either $A_2=0$, which results in $\MapOneParameterFamilyTwo{1}{\eps}$, or $1 + 12 \eps A_2^2 + 48 A_2^4 + 64 \eps A_2^6 = 0$, which, since $A_2\geq 0$, has the only solution if we take $\eps = -1$ and $A_2 = 1/2$. This choice of parameters gives $\MapOneParameterFamilyThree{2}{1/2}{-}$.

In the second case $b_{21}=\frac{\eps + A_2^2} 2$ we immediately obtain the mapping $\MapOneParameterFamilyThree{2}{s}{\eps}$, where we set $s=A_2 \geq 0$.\\
If we handle $\psi_3$, which depends on $A_2,B_2$ and $b_{21}$, we first consider the coefficient of $\chi^2 \tau^2$ in $\eqref{eq:MapEq}$ to get $B_2 = A_2^2$. Then the coefficient of $\tau^4$ yields two cases:

The first one is $b_{21}= \frac{ A_2^2} 2$. If we consider the coefficient of $\chi \tau^3$ we obtain $A_2 = 0$ and thus the mapping $\MapOneParameterFamilyTwo{1}{\eps}$. 

The second case is $b_{21}=\frac{\eps + A_2^2} 2$ which again gives $\MapOneParameterFamilyThree{2}{s}{\eps}$ after setting $s=A_2\geq 0$.\\
Treating $\psi_4$, which depends on $B_2$ and $C_{22}$, we proceed as follows: The coefficient of $\chi^2 \tau^2$ shows $C_{22} = 2 \imu \eps \bar B_2$ and $\tau^4$ gives $B_2 = \frac{e^{\imu t}} 4$ for $t\in \R$. In order to get rid of $e^{\imu t}$ in $\psi_4$ we apply the following matrices 
\begin{align*}
U_2 \coloneqq
\left(\begin{array}{c c}
1/v & 0 \\
0 & 1 
\end{array}
\right), \qquad
U'_3\coloneqq
\left(\begin{array}{c c  c}
v& 0 & 0 \\
0 & v^2 &  0 \\
0 & 0 & 1 
\end{array}
\right) \qquad \text{ where} \quad v=\frac{ 2 e^{-\frac{ \imu t}{2}}} {1-\eps + \imu (1 + \eps)}  \in \UnitSphere, 
\end{align*}
to $\psi_4$, which do not affect the normalization. The resulting mapping is $\MapOneParameterFamilyThree{3}{0}{\eps}$.\\
Finally we deal with $\psi_5$ in which the terms $A_2$ and $C_{22}$ occur. We write $C_{22} = \re(C_{22}) + \imu \im(C_{22})$ and consider the coefficient of $\chi^2\tau^2$ to obtain $\im(C_{22})= -\frac 1 2$ and $\re(C_{22})=0$. We end up with the mapping $\MapOneParameterFamilyThree{3}{s}{\eps}$ after setting $s=A_2 \geq 0$, which completes the proof of the lemma.
\end{proof}

\subsection{Jet Determination}

In this section we provide a jet determination result based on \autoref{theorem:ReductionOneParameterFamilies2}.

\begin{corollary}[label=cor:jetDetermination]
Let $U \subset \C^2$ be a neighborhood of $0$ and $H: U \rightarrow \C^3$ a holomorphic mapping. We denote the components of $H$ by $H=(f,g)=(f_1,f_2,g)$ and write $j_0(H) \coloneqq \{ j^2_0 H, f_{z^2 w}(0)\}$. If for $H_1,H_2 \in \FTwo$ the coefficients belonging to $j_0(H_1)$ and $j_0(H_2)$ coincide, we have $H_1 \equiv H_2$.
\end{corollary}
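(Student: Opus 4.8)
\textit{Proof plan for Corollary~\ref{cor:jetDetermination}.}
The plan is to read the statement off the classification \autoref{theorem:ReductionOneParameterFamilies2} together with the normalization \autoref{proposition:NormalForm2Nondeg}. By \autoref{proposition:NormalForm2Nondeg} and \autoref{rem:FormOfAutomorphism}, for $H\in\FTwo$ there is a standard isotropy pair $(\sigma_\gamma,\sigma'_{\gamma'})$ with $N:=\sigma'_{\gamma'}\circ H\circ\sigma_\gamma\in\NTwo$; since normalized maps are pairwise non-isotropically-equivalent, $N$ is uniquely determined by $H$, and $H=(\sigma'_{\gamma'})^{-1}\circ N\circ(\sigma_\gamma)^{-1}$ with $(\sigma_\gamma)^{-1},(\sigma'_{\gamma'})^{-1}$ again standard. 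It therefore suffices to show that both $N$ and the parameters $(\gamma,\gamma')$ can be recovered from $j_0(H)=\{j^2_0H,f_{z^2w}(0)\}$; applied to $H_1$ and $H_2$ this gives $N_1=N_2$ and coinciding normalizing isotropies, hence $H_1=H_2$.

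Concretely I would proceed in three steps. \textbf{(1)} Retracing the proof of \autoref{proposition:NormalForm2Nondeg}, all standard parameters $\lambda,u,u',a',c',c$ are explicit rational functions of $j^2_0H$ alone, and the scalar $s=2\widehat f_{1w^2}(0)=2|T_1(j^2_0H)|$ that labels $N$ inside its family depends on $j^2_0H$ only; the remaining parameters $r,r'$ solve the linear system \eqref{eq:f2z2w}, whose only third-order input is, by \autoref{rem:moreCoefficientsN}, the coefficients $f_{z^3}(0)$ and $H_{z^2w}(0)$, the last entry of which is eliminated by the relation $3\,g_{z^2w}(0)=f_{1z^3}(0)\,\overline{f_{1z}(0)}+\eps\,f_{2z^3}(0)\,\overline{f_{2z}(0)}$ obtained from \eqref{eq:MapEq}; so the normalizing isotropy, and with it $N$, is a rational function of $j^2_0H$, $f_{z^2w}(0)$ and $f_{z^3}(0)$. \textbf{(2)} Using the explicit formulas of \autoref{theorem:ReductionOneParameterFamilies2} and \autoref{remark:SummaryJetNormalizedMapping} one computes that $\widehat f_{2z^2w}(0)$ equals $0$, $\tfrac\imu2(\eps+s^2)$, $\tfrac{3\imu\eps}8$ on $\MapOneParameterFamilyTwo{1}{\eps}$, $\MapOneParameterFamilyThree{2}{s}{\eps}$, $\MapOneParameterFamilyThree{3}{s}{\eps}$ respectively, and these values are pairwise distinct for every admissible $s$ except exactly where two of the maps genuinely coincide (e.g.\ $\eps=-1$, $s=\tfrac12$, cf.\ \autoref{lem:WhenDeg3IsDeg2}); hence knowing $s$ and $\widehat f_{2z^2w}(0)$ determines which family $N$ belongs to, and thus $N$ itself. \textbf{(3)} Finally, on each of these at-most-one-parameter families, pre- and post-composed with standard isotropies, one verifies directly that $f_{z^3}(0)$ is a rational function of $j^2_0H$ and $f_{z^2w}(0)$; feeding this back into (1)--(2) closes the argument.

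The crux -- and the main obstacle -- is step (3), which breaks an apparent circularity: the normalizing isotropy genuinely sees $f_{z^3}(0)$, and for an arbitrary $2$-nondegenerate transversal map $f_{z^3}(0)$ is a free jet coordinate (the $A_3,B_3$ of \eqref{eq:BasicIdentityUnknowns}), not contained in $j_0(H)$; only after the classification \autoref{theorem:ReductionOneParameterFamilies2} has cut $\FTwo$ down to the three explicit families does $f_{z^3}(0)$ become a function of $j_0(H)$ on them. Once that is established, the passage from the (now determined) $4$-jet of $N$ to $N$, and hence to $H$, is immediate -- either from the jet parametrization \autoref{lemma:BasicIdentity2Nondeg} or because $H$ is then a rational map of degree $\le 3$. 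In effect the proof reduces to a finite, if somewhat tedious, computation on the formulas of \autoref{theorem:ReductionOneParameterFamilies2}.
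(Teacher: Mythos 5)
Your proposal follows the same architecture as the paper's proof: reduce to maps in $\NTwo$, use \autoref{theorem:ReductionOneParameterFamilies2} and \autoref{theorem:NonIntersectingOrbits} to identify the normalized representative, and show that the normalizing isotropy is itself a function of the jet $j_0(H)$. You have also correctly located the one genuinely delicate point, namely that the parameters $r,r'$ in \autoref{proposition:NormalForm2Nondeg} see the coefficient $f_{z^3}(0)$, which is not part of $j_0(H)$. Where you diverge from the paper is in how this is resolved. The paper does \emph{not} need the classification for this step: it restricts the system \eqref{eq:systemPsi} to $(w,\chi,\tau)=0$ and uses $2$-nondegeneracy (the determinant $\det(\partial\Phi/\partial Z')(0)=-\eps\neq0$) to solve for $f(z,0)$ explicitly in terms of $j_0^2H$; hence $f_{z^3}(0)$ is a rational function of the $2$-jet for \emph{every} $H\in\FTwo$, and there is no circularity to break. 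Your step (3) — verifying on each family $\sigma'\circ\MapOneParameterFamilyThree{k}{s}{\eps}\circ\sigma$ that $f_{z^3}(0)$ is a rational function of $j_0(H)$ — would in principle succeed, but it is a computation over the full parameter space $(s,\gamma,\gamma')$ that you have only announced, and it is exactly the content that the two-line Segre-set argument delivers for free; I would replace step (3) by that argument. (Your identity $3\,g_{z^2w}(0)=f_{1z^3}(0)\overline{f_{1z}(0)}+\eps f_{2z^3}(0)\overline{f_{2z}(0)}$ is correct but goes the wrong way: the paper instead uses $g_{z^2w}(0)=2\imu\bigl(f_{1z^2}(0)\overline{f_{1w}(0)}+\eps f_{2z^2}(0)\overline{f_{2w}(0)}\bigr)$, which needs only $2$-jet data.)

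One further small correction to step (2): the coefficient $f_{2z^2w}(0)$ together with $s$ does not by itself separate the families. At $\eps=-1$ and $s=\sqrt{5/8}$ the values $\tfrac{\imu}{2}(\eps+s^2)$ and $\tfrac{3\imu\eps}{8}$ coincide although $\MapOneParameterFamilyThree{2}{s}{-}\neq\MapOneParameterFamilyThree{3}{s}{-}$; you need to also invoke $f_{2w^2}(0)$ (equal to $s^2/2$ versus $-\eps/8$, and contained in $j_0^2H$), which is what reduces the coincidence locus to exactly the case $\eps=-1$, $s=1/2$ of \autoref{lem:WhenDeg3IsDeg2}. With these two adjustments your plan matches the paper's proof.
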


\begin{proof}
We note that $\NTwo$ is the collection of the mappings $\MapOneParameterFamilyTwo{1}{\eps}, \MapOneParameterFamilyThree{2}{s}{\eps}$ and $\MapOneParameterFamilyThree{3}{s}{\eps}$ from \autoref{theorem:ReductionOneParameterFamilies2}. The only parameter left in maps belonging to $\NTwo$ is $s=2 f_{1w^2}(0)$. Let $H_1,H_2\in \NTwo$, then we verify that if the coefficients which belong to $j_0(H_1)$ and $j_0(H_2)$ coincide, this yields $H_1 \equiv H_2$.\\
If $s=0$ in $H_1$ or $H_2$, then if we compare $j_0(H_1)$ and $j_0(H_2)$ by looking at the coefficients $f_{2 w^2}(0)$ and $f_{2 z^2 w}(0)$, the only possibility is $H_1 \equiv H_2$. \\
If $s\neq 0$, the coefficient $f_{1 w^2}(0)$ yields that we may have $\MapOneParameterFamilyThree{2}{s}{\eps} = \MapOneParameterFamilyThree{3}{t}{\eps}$ for some $s,t\geq 0$. According to \autoref{lem:WhenDeg3IsDeg2} this is only possible if and only if $t=s=1/2$ and $\eps = -1$. In this case we have $\MapOneParameterFamilyThree{2}{1/2}{-} \equiv \MapOneParameterFamilyThree{3}{1/2}{-}$. Next we note the following: In order to be able to apply \autoref{theorem:ReductionOneParameterFamilies2} to a mapping $H \in \FTwo$ we need to compose $H$ with isotropies according to \autoref{proposition:NormalForm2Nondeg}. We see from the proof of \autoref{proposition:NormalForm2Nondeg} and \autoref{rem:moreCoefficientsN} that the standard parameters used to normalize $H$ precisely depend on the elements of $j_0(H)$ as well as $g_{z^2 w}(0)$ and $f_{z^3}(0)$. To show the dependence of $g_{z^2 w}(0)$ on $j_0^2 f$ we take derivatives of \eqref{eq:MapEq} twice with respect to $z$ and once with respect to $\tau$ and evaluate at $0$ to obtain
\begin{align*}
g_{z^2 w}(0) = 2 \imu \Bigl(f_{1 z^2}(0) \bar f_{1 w}(0) + \eps f_{2 z^2}(0) \bar f_{2 w}(0) \Bigr).
\end{align*}
To get rid of the dependence of $f_{z^3}(0)$ we consider the system of equations in \eqref{eq:systemPsi} and set $(w,\chi,\tau)=0$. Then due to the $2$-nondegeneracy of $H$ we can solve for $f(z,0)$, which then depends on elements of $j_0^2 H$. This completes the proof of the jet determination.
\end{proof}

\subsection{Isotropic Inequivalence of Mappings in \texorpdfstring{$\NTwo$}{N}}

The theorem below proves the remaining statement in \autoref{theorem:ReductionOneParameterFamilies2} which says that the isotropic orbit of a given normalized map does not intersect the isotropic orbit of a different normalized map.

\begin{theorem}[label=theorem:NonIntersectingOrbits]
Let $H_1, H_2 \in \NTwo$ and $(\sigma,\sigma') \in \Isotropies$ such that $\sigma' \circ H_1 \circ \sigma = H_2$, then $H_1 \equiv H_2$.
\end{theorem}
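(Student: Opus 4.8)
The plan is to show that an isotropy pair $(\sigma,\sigma')\in\Isotropies$ with $\sigma'\circ H_1\circ\sigma=H_2$ and $H_1,H_2\in\NTwo$ must be essentially trivial, so that $H_2$ coincides with $H_1$ as a germ. By \autoref{theorem:ReductionOneParameterFamilies2} I may assume that $H_1=(f_1,f_2,g)$ is one of $\MapOneParameterFamilyTwo{1}{\eps}$, $\MapOneParameterFamilyThree{2}{s}{\eps}$, $\MapOneParameterFamilyThree{3}{s}{\eps}$, with $s=2f_{1w^2}(0)\ge 0$, and likewise for $H_2$.

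First I would run the normalization procedure of \autoref{proposition:NormalForm2Nondeg} with $H_1$ itself as input: since $H_1$ already satisfies all normalization conditions, its relevant jet coefficients take the special values recorded in \autoref{remark:SummaryJetNormalizedMapping}, and substituting these into the parameter formulas from the proof of \autoref{proposition:NormalForm2Nondeg} re-solves conditions (i)--(vii) for the standard parameters of $\widehat H\coloneqq\sigma'\circ H_1\circ\sigma$. Conditions (i)--(iv) force the dilations to $\lambda=\lambda'=1$, the shear parameters to $c=c'=0$ and $a_2'=0$, the sign in \eqref{eq:tau} to $+1$ (so the reflection $\pi'$ of \eqref{eq:Pi} is not involved), together with $a_1'=u^2$ and $u'=u^{-3}$; hence $\sigma$ reduces to $(z,w)\mapsto(uz,w)/(1+rw)$ and $\sigma'$ to $(z',w')\mapsto(\bar u z_1',\bar u^2 z_2',w')/(1+r'w')$ with $u\in\UnitSphere$ and $r,r'\in\R$. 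Conditions (vi) and (vii) then give the same nonsingular linear system in $(r,r')$ as in \autoref{proposition:NormalForm2Nondeg}, whose unique solution is $r=r'=0$. Thus $\widehat H=\Theta_u(H_1)$, where $\Theta_u(f_1,f_2,g)(z,w)\coloneqq(\bar u f_1(uz,w),\bar u^2 f_2(uz,w),g(uz,w))$.

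It remains to use condition (v). Since $\widehat f_{1w^2}(0)=\bar u\,f_{1w^2}(0)=\bar u\,s/2$ must be real and nonnegative, if $s>0$ this forces $u=1$, so $\sigma$ and $\sigma'$ are the identity and $H_2=H_1$; this disposes of every map with $s>0$. If $s=0$, i.e. $H_1\in\{\MapOneParameterFamilyTwo{1}{\eps},\MapOneParameterFamilyThree{2}{0}{\eps},\MapOneParameterFamilyThree{3}{0}{\eps}\}$, condition (v) is vacuous and a residual $\UnitSphere$-family $\widehat H=\Theta_u(H_1)$ survives; here I would check, from the explicit formulas of \autoref{theorem:ReductionOneParameterFamilies2}, that $\Theta_u$ fixes each of these three maps. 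For $\MapOneParameterFamilyTwo{1}{\eps}$ and $\MapOneParameterFamilyThree{2}{0}{\eps}$ this is immediate: the numerator of the first component is homogeneous of degree one in $z$, that of the second homogeneous of degree two, and the third component and the common denominator are independent of $z$, so all powers of $u$ cancel. For $\MapOneParameterFamilyThree{3}{0}{\eps}$ the residual parameter must be pinned by a further, more delicate comparison -- combining the preceding reduction with the jet-determination result of \autoref{cor:jetDetermination} and, if necessary, a direct comparison of coefficients in the mapping equation \eqref{eq:MapEq} -- to force $\Theta_u(H_1)=H_1$. In all cases $H_2=H_1$.

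I expect the $s=0$ step, and specifically the map $\MapOneParameterFamilyThree{3}{0}{\eps}$, to be the main obstacle: there the residual phase genuinely moves the higher-order Taylor data, so ruling out that $\Theta_u$ produces a \emph{different} normalized map is the least formal part of the argument and relies on the rigidity already established (the explicit list in \autoref{theorem:ReductionOneParameterFamilies2} together with \autoref{cor:jetDetermination}). The first step is lengthy rather than conceptual: one has to track how each normalized coefficient of $\widehat H$ depends on the full list of standard parameters, which is the natural place for a computer-algebra verification.
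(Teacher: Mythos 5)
Your proposal is correct and follows essentially the same route as the paper: one equates the normalized jet coefficients of $\sigma'\circ H_1\circ\sigma$ and $H_2$ order by order, finds that conditions (i)--(iv), (vi), (vii) kill all standard parameters except a residual rotation $u\in\UnitSphere$ (with $a_1'=u^2$, $u'=\bar u^3$), and then splits into $s>0$, where condition (v) forces $u=1$, and $s=0$, where the jet determination of \autoref{cor:jetDetermination} finishes. The one step you flag as open, pinning $u$ for $\MapOneParameterFamilyThree{3}{0}{\eps}$, closes in one line: the residual rotation sends $f_{2w^2}(0)=-\eps/8$ to $\bar u^{2}f_{2w^2}(0)$, and since $H_2\in\NTwo$ this coefficient must again be $0$ or $-\eps/8$ by the explicit classification, forcing $u^2=1$; and $\Theta_{\pm 1}$ visibly fixes $\MapOneParameterFamilyThree{3}{0}{\eps}$ because each numerator has fixed parity in $z$ matching its weight and the common denominator is even in $z$.
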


\begin{proof}
We let $H_k=(f^k_1,f^k_2,g^k)$ for $k=1,2$ from the hypothesis satisfy the conditions we collected in \autoref{remark:SummaryJetNormalizedMapping}. We write $s_k\coloneqq 2 |f^k_{1 w^2}(0)| \geq 0, x_k \coloneqq f^k_{2 w^2}(0) \in \C$ and $y_k\coloneqq \im\bigl(f^k_{2 z^2 w}(0)\bigr) \in \R$. Further we set $F\coloneqq \sigma' \circ H_1 \circ \sigma - H_2$ and write $F=(f,g) = (f_1,f_2,g)$. By \autoref{cor:jetDetermination} we only need to consider components of $j_0 F$. We let $(\sigma,\sigma') \in \Isotropies$ with the notation from \eqref{eq:sigma} and \eqref{eq:tau} respectively. The coefficients of order $1$ of $F$, which are $f_z(0)$ and $F_w(0)$, are required to vanish and the corresponding equation is given as follows:
\begin{align}
\label{eq:freenessFirstOrderZ}
U'~ {^t(u \lambda \lambda',0)} & = (1,0),\\
\label{eq:freenessFirstOrderW}
U' ~{^t(u c + \lambda c_1', \lambda c_2',\lambda \lambda')} & = (0,0,1).
\end{align}
These equations imply $\lambda' = 1/\lambda, a_2' = c_2' = 0, a_1' = 1/(u u')$ and $c_1' = - u c/\lambda$. Assuming these standard parameters we consider the coefficients of order $2$, which are $f_{z^2}(0), F_{zw}(0)$ and $F_{w^2}(0)$, to obtain:
\begin{align}
\label{eq:freenessSecondOrderZ2}
(0,2 u' u^3 \lambda) & = (0,2),\\
\label{eq:freenessSecondOrderZW}
\left(-r - \lambda^2 r' + \frac{\imu \eps \lambda^2} 2, 2 u' u^3 \lambda c,0\right) & =\left(\frac{\imu \eps}2,0,0\right),\\
\label{eq:freenessSecondOrderW2}
\left(\lambda^2 (\lambda s_1 + \imu \eps u c)/u,u u' \lambda (\lambda^2 x_1 + 2 u^2 c^2), -2 (r + \lambda^2 r')\right) & = (s_2,x_2,0).
\end{align}
The second component of \eqref{eq:freenessSecondOrderZW} implies $c=0$. Assuming this value for $c$ we obtain for the third order terms $f_{z^2 w}(0)$ the following equation:
\begin{align}
\label{eq:freenessThirdOrderZ2W}
\left(4 \imu u \lambda^3 s_1, u' u^3  \lambda (-4 r - 2 \lambda^2 r' + \imu \lambda^2 y_1) \right) & = (4 \imu s_2, \imu y_2).
\end{align}
The second component of \eqref{eq:freenessSecondOrderZ2} shows $\lambda=1$. Furthermore the third component of \eqref{eq:freenessSecondOrderW2} shows $r'= - r$ and since from the second component of \eqref{eq:freenessSecondOrderZ2} we get $u' u^3 = 1$, we obtain from the second component of \eqref{eq:freenessThirdOrderZ2W} that $r=0$. The equation from $f_{2 z^2}(0)$ given by $u' u^3 = 1$ uniquely determines $u'$. The remaining equation from the first component of \eqref{eq:freenessSecondOrderW2}, which comes from the coefficient $f_{1 w^2}(0)$, is $s_1/u = s_2$. We have to consider two cases:\\
If $s_2>0$, then $s_1 >0$, which implies $u =1$ is the only possibility. This gives $\sigma= \id_{\C^2}$ and $\sigma'= \id_{\C^3}$ and hence $H_1 \equiv H_2$.\\
If $s_2= 0$, then also $s_1 = 0$ and from the equations \eqref{eq:freenessFirstOrderZ}--\eqref{eq:freenessThirdOrderZ2W} we obtain that $j_0(H_1) = j_0(H_2)$ such that \autoref{cor:jetDetermination} implies $H_1 \equiv H_2$.
\end{proof}

\section{Equivalence of Mappings in \texorpdfstring{$\NTwo$}{N}}
\label{sec:global}

In this section we prove \autoref{theorem:ReductionFinite}. For this purpose we compose the mappings $\MapParticularValueTwo{k}{\eps}$ with translations depending on a parameter $p$ to obtain mappings denoted by $\MapTranslationParticularValueTwo{k}{\eps}$. These mappings are in general not elements of $\NTwo$, hence we have to renormalize to obtain maps denoted by $\MapRenormalizationTranslationParticularValueTwo{k}{\eps}$. By \autoref{theorem:ReductionOneParameterFamilies2} the mappings $\MapRenormalizationTranslationParticularValueTwo{k}{\eps}$ are necessarily maps as $\MapOneParameterFamilyThree{k}{s}{\eps}$, with the difference that $s=s(p)$ depends on the parameter $p$, which suffices to show that $\FTwo$ consists of finitely many orbits.

By \autoref{rem:FormOfAutomorphism} it is easy to see that the following relation is an equivalence relation.

\begin{definition}[label=def:transitiveEquiv]
For $k=1,2$ let $H_k:U_k \rightarrow \C^3$ be a holomorphic mapping where $U_k$ is an open and connected neighborhood of $p_k \in \HeisenbergOne{2}$ and $H_k(U_k \cap \HeisenbergOne{2}) \subset \HeisenbergTwo{\eps}{3}$. We say that $H_1$ and $H_2$ are \textit{(transitively) equivalent} if there exist isotropies $(\sigma,\sigma') \in \Isotropies$ and points $(p,p') \in \HeisenbergOne{2} \times \HeisenbergTwo{\eps}{3}$ such that $H_2 = \sigma' \circ t'_{p'} \circ H_1 \circ t_p \circ \sigma$. The set of all mappings, which are transitively equivalent to a given mapping $H$, is called the \textit{(transitive) orbit} of $H$ and is denoted by $O(H)$.
\end{definition}

\begin{remark}
If both mappings $H_1, H_2$ fix $0$, then necessarily $p' = H_1(p)$ in \autoref{def:transitiveEquiv} above, which is the same construction as in \cite[Section 4]{huang1}. If $p=0$ then transitive equivalence coincides with isotropic equivalence. We define for suitable $p \in \HeisenbergOne{2}$ the map $H_p \coloneqq t'_{H(p)} \circ H \circ t_p = (f_{1,p},f_{2,p}, g_p)$.
\end{remark}

\subsection{Mappings at Points of Different (Non-)Degeneracy}
\label{section:higherNonDegeneracy}

\begin{definition}
For $U \subset \C^2$ a neighborhood of $0$ let $H: U \rightarrow \C^3$ be a holomorphic mapping given by $H=P/Q$, where $P,Q$ are polynomials such that $P(0)=0$ and $Q(0)\neq 0$ with components $H=(f_1,f_2,g)$. We define $D_H  \coloneqq \{ p \in \HeisenbergOne{2}: Q(p) = 0\}$ and $A_H \coloneqq \HeisenbergOne{2} \setminus D_H$. The set $N_H$ of points $p \in \HeisenbergOne{2}$, such that $H_p$ does not satisfy the first condition in \eqref{eq:F2Cond} is given by 
\begin{align*}
N_H \coloneqq\big\{p \in A_H: f_{1,p z}(0) f_{2,p z^2}(0) - f_{2,p z}(0) f_{1,p z^2}(0) = 0\big\}.
\end{align*}
The set $T_H$ of points $p \in \HeisenbergOne{2}$, where $H_p$ does not satisfy the second condition of \eqref{eq:F2Cond} is given by
\begin{align*}
T _H\coloneqq \big\{p \in A_H: g_{p w}(0)= 0\big\}.
\end{align*}
\end{definition}

\begin{remark}
It is easy to show that $N_H = N$ from \autoref{remark:NotTwoNondegeneratePoints} and $T_H$ is the set of points in $\HeisenbergOne{2}$ where $H$ is not transversal according to \autoref{rem:NonTransversality}.
\end{remark}

For a mapping $H \in \FTwo$ we have the following possibilities: Either there exists $p\in \HeisenbergOne{2}$ such that $H$ is not $2$-nondegenerate at $p$ or $H$ is $2$-nondegenerate everywhere in the domain of $H$.\\
In the first case we consider $H_p$ and in the second case we take $H$ and compose the map with isotropies fixing $0$. Then in both cases we normalize with respect to some different normal form than the one we introduced in \autoref{proposition:NormalForm2Nondeg}.\\ 
The following example gives a mapping, which is $2$-nondegenerate everywhere in its domain:

\begin{example}[label=ex:G1plusEverywhereNonDeg]
We consider the mapping $H \coloneqq \MapOneParameterFamilyTwo{1}{+}$ such that $A_H = \HeisenbergOne{2} \setminus \{(0, \pm 1)\}$. Then we need to compute $N_H$, which is given by the following equation if we write $p = (r e^{\imu \theta}, v + \imu r^2), r \geq 0, \theta, v \in \R$:
\begin{align*}
\frac{(1+\imu v + r^2)(1+v - \imu r^2)(1- v + \imu r^2)}{(1+(r^2- \imu v)^2)^3} = 0,
\end{align*}
which admits no solution in $A_H$. 
\end{example}

In the following paragraphs we deduce some mappings of different (non-)degeneracy at $0$.

\begin{example}[label=example:GThreeMinus2]
For $H \coloneqq \MapOneParameterFamilyThree{2}{\frac {\sqrt{5}} 4}{-}$ we find $p=\left(\frac{376 + 32 \imu}{89 \sqrt{5}}, - \frac{512 + 320 \imu}{89}\right) \in N_H \cap T_H$. We apply isotropies fixing $0$ to $H_p$ and denote the resulting mapping by $G=(f,g)=(f_1,f_2,g)$. We normalize the mapping according to the following conditions:
\begin{multicols}{3}
\begin{compactitem}
\item[\rm{(i)}] $f_z(0) =(1,1)$
\item[\rm{(ii)}] $f_w(0) = (0,0)$
\item[\rm{(iii)}] $f_{z^2}(0) = (0,0)$
\item[\rm{(iv)}] $f_{1 z w}(0) = 0$
\item[\rm{(v)}] $g_{w^2}(0) = 2$
\item[\rm{(vi)}] $f_{1 z w^2}(0) = 0$
\end{compactitem}
\end{multicols}
when we use the following standard parameters:
\begin{align*}
& c = -\frac{2 + 199 \imu}{2848 \sqrt{5}}, \quad r = \frac{1223}{2048}, \quad c_1' =  \frac{1276 - 3243 \imu} {22304 \sqrt{5}}, \quad c_2' = \frac{11484 + 29187 \imu } {55760},\\
& a_1' = \frac{30613535492 - 20104041651 \imu} {353339968 \sqrt{3485}}, \quad a_2' = -\frac{11384417567 - 3593306283 \imu} {353339968 \sqrt{697}},  \quad \lambda' =  \frac{32 \sqrt{697}}{89},\\
&u' = \frac{538504992958 + 544496189479 \imu} {342480284921 \sqrt{5}}, \quad  r' = -\frac{756545275}{32444416},
\end{align*}
and the remaining standard parameters are chosen trivially. The resulting mapping is of the form $(z,w) \mapsto \bigl(z, \frac z {1+w}, \frac{w^2} {1+w} \bigr)$. This mapping is (1,1)-degenerate and not transversal at $0$. If we apply translations $t_q$ and $t'_{p'}$, where $q=(0,-1)$ and $p'=(0,0,-2)$, to the above map we obtain 
\begin{align}
\label{eq:SimpleG21Minus}
\SimpleMapParticularValueTwo{3}{-}: (z,w) \mapsto \left(z, \frac z w, \frac{1+w^2} w \right).
\end{align}
\end{example}

\begin{example}[label=example:GFourMinus]
The map $H \coloneqq \MapParticularValueTwo{4}{-}$ has $p=\bigl(\frac 4 {\sqrt{3}}, \frac {16 \imu} 3\bigr) \in N_H$. First we scale $H_p$ via dilations given by $(z,w)\mapsto (\sqrt{3} z, 3 w)$ and $(z_1',z_2',w') \mapsto \bigl(\frac{11 z_1'} {27},\frac{11 z_2'} {27} ,\frac{121 w'} {729}\bigr)$ and then we compose the resulting mapping with isotropies fixing $0$ to obtain a map denoted by $G=(f,g)=(f_1,f_2,g)$. We impose the following normalization conditions:
\begin{multicols}{3}
\begin{compactitem}
\item[\rm{(i)}] $f_z(0) =(0,\sqrt{3})$
\item[\rm{(ii)}] $G_w(0) = (0,0,-3)$
\item[\rm{(iii)}] $f_{2 z w}(0) = 0$
\item[\rm{(iv)}] $g_{w^2}(0) = 0$
\item[\rm{(v)}] $f_{1 z^2w}(0) = 0$
\item[\rm{(vi)}] $f_{1 z^3}(0) = 24$
\end{compactitem}
\end{multicols}
which are achieved if we take the following standard parameters:
\begin{align*}
& c = \imu, \quad \lambda = \frac 8 3, \quad a'_1 = \frac{14} {11}, \quad a_2'= - \frac{5 \sqrt{3}} {11}, \quad \lambda' = \frac {3 \sqrt{3}} 8, \quad  c_1' = \frac {3 \sqrt{3}\imu} 8,
\end{align*}
and the remaining parameters are chosen trivially. The resulting mapping is given by $(z,w) \mapsto \frac{(4 z^3 , \sqrt{3}(1- w^2)z, -(3 + w^2)w)}{1+ 3 w^2}$, which is $3$-nondegenerate and transversal at $0$.
\end{example}

\begin{example}[label=example:GFourPlus]
Here we consider $H \coloneqq \MapParticularValueTwo{4}{+}$ to obtain $H_p$ for $p=(4 (1+2 \sqrt{2}/3)^{1/2},16 (1+2 \sqrt{2}/3) \imu)$ is not $2$-nondegenerate at $0$. First we compose $H_p$ with the following dilations
\begin{align*}
(z,w) & \mapsto \Bigl((9 + 6 \sqrt{2})^{\frac 1 2} z, 3 (3 + 2\sqrt{2}) w \Bigr), \\
(z_1',z_2',w') & \mapsto \left( \frac{(23 + 20 \sqrt{2}) z_1'}{27},  \frac{(23 + 20 \sqrt{2}) z_2'}{27},  \frac{(1329 + 920 \sqrt{2}) w'}{729} \right),
\end{align*}
to remove some common factors.
Then we compose the resulting mapping with isotropies fixing $0$ and denote this mapping by $G$. Next we consider the same normalization conditions as in \autoref{example:GFourMinus} except we require $G_w(0) = (0,0,3)$ and use the following nontrivial standard parameters:
\begin{align*}
& c = (1+ \sqrt{2}) \imu, \quad \lambda = \frac { 8 \sqrt{2}} 3, \quad u' = -1, \quad a'_1 = \frac{(75 \sqrt{2} - 154 )} {271}, \quad a_2'= -\frac{5 \sqrt{3}(11 + 14 \sqrt{2}) } {271},\\
& \lambda' = \frac 3 8 \sqrt{\frac{51} 2 - 18 \sqrt 2}, \quad c_1' = \frac {- 21\imu (3987 - 2760 \sqrt{2})^{\frac 1 2}} {1084 \sqrt{2}}, \quad c_2' = \frac{45 \imu(40-23 \sqrt{2})} {4336}.
\end{align*}
The resulting mapping is given by $(z,w) \mapsto \frac{(4 z^3 , \sqrt{3}(1+ w^2)z, (3 - w^2)w)}{1- 3 w^2}$, which is $3$-nondegenerate and transversal at $0$.
\end{example}

\begin{remark}[label=remark:FoundFaransMapHeisenberg]
\autoref{example:GFourMinus} and \autoref{example:GFourPlus} show that the mapping 
\begin{align}
\label{eq:SimpleG3eps0}
\SimpleMapParticularValueTwo{4}{\eps}: (z,w) \mapsto \frac{(4 z^3 ,\sqrt{3}(1+ \eps w^2) z, (3 \eps - w^2)w)}{1- 3\eps w^2},
\end{align}
is equivalent to $\MapParticularValueTwo{4}{\eps}$.
\end{remark}

\begin{example}[label=example:G2Minus]
We prove that $H\coloneqq\MapOneParameterFamilyThree{2}{s}{-}$ with $0\leq s  \leq 1/2$ admits no points in $A_H$, where $H$ fails to be $2$-nondegenerate. If we write $p =(r e^{\imu \theta}, v + \imu r^2) \in A_H$ the set $N_H$ is given by 
\begin{align*}
N_H=\{p\in A_H: -4 r  s + e^{\imu \theta} \bigl(4 + (1 - 3 s^2) (r^2  + \imu v) \bigr) = 0,  0 \leq s \leq 1/2\},
\end{align*}
which is the empty set as can be observed easily. We compose $\MapParticularValueTwo{2}{-}$ with isotropies fixing $0$ to obtain a mapping denoted by $G=(f_1,f_2,g)$. We consider the following normalization conditions:
\begin{multicols}{3}
\begin{compactitem}
\item[\rm{(i)}] $f_z(0) =(1,0)$
\item[\rm{(ii)}] $G_w(0) = (0,0,1)$
\item[\rm{(iii)}] $f_{2 z^2 }(0) = 2 \sqrt{2}$
\item[\rm{(iv)}] $f_{1 w^2}(0) = 0$
\item[\rm{(v)}] $g_{w^2}(0) = 0$
\item[\rm{(vi)}] $\re(f_{1 z^2 w}(0)) = 0$
\end{compactitem}
\end{multicols}
These conditions are achieved with the following nontrivial standard parameters $c =-\frac {\imu} {2 \sqrt{2}}, \lambda = \sqrt{2}, \lambda' = \frac {1} {\sqrt{2}}, c_1' = \frac 1 {\sqrt{2}}, c_2' = \frac{\imu} {4}$,
which results in a mapping given by
\begin{align}
\label{eq:SimpleG2Minus12}
\SimpleMapParticularValueTwo{2}{-}: (z,w) \mapsto \left(\frac{z (1 + \sqrt{2} z - \imu w)}{1 + \sqrt{2} z}, \frac{z (\sqrt{2} z - \imu w)}{1 + \sqrt{2} z}, w  \right).
\end{align}
\end{example}

\subsection{Renormalization}
\label{section:renormalization}

\begin{definition}[label=def:newSimpleMaps]
We define and recall
\begin{align*}
\SimpleMapParticularValueTwo{1}{\eps}(z,w) & \coloneqq \left(\frac{z (1 + \imu \eps w)}{1  - \imu \eps w}, \frac{2 z^2}{1 - \imu \eps w}, w \right),\quad
\SimpleMapParticularValueTwo{2}{-}(z,w) \coloneqq \left(\frac{z (1 + \sqrt{2} z - \imu w)}{1 + \sqrt{2} z}, \frac{z (\sqrt{2} z - \imu w)}{1 + \sqrt{2} z}, w  \right), \\
\SimpleMapParticularValueTwo{3}{-}(z,w) & \coloneqq \left(z , \frac z w, \frac{1+w^2} w\right),\qquad \qquad \qquad 
\SimpleMapParticularValueTwo{4}{\eps}(z,w)  \coloneqq \frac{(4 z^3 ,\sqrt{3}(1+ \eps w^2) z, (3 \eps - w^2)w)}{1- 3\eps w^2}.
\end{align*}
\end{definition}

The mapping $\SimpleMapParticularValueTwo{1}{\eps}$ is isotropically equivalent to $\MapOneParameterFamilyThree{2}{0}{\eps}$ by scaling with the isotropies $ (z,w) \mapsto(2 z, 4 w)$ and $(z_1',z_2',w') \mapsto (z_1'/2,z_2'/2,w'/4)$. The map $\SimpleMapParticularValueTwo{2}{-}$ is the one from \eqref{eq:SimpleG2Minus12}, $\SimpleMapParticularValueTwo{3}{-}$ is the map \eqref{eq:SimpleG21Minus} and $\SimpleMapParticularValueTwo{4}{\eps}$ is taken from \eqref{eq:SimpleG3eps0}.

\begin{proof}[Proof of \autoref{theorem:ReductionFinite}]
We define the following sets for $\eps = +1$:
\begin{align*}
\GeneralUpDown{\mathcal W}{1}{+} \coloneqq & ~\left\{(r_0, \imu r_0^2): 0 < r_0 < 1 \right\},\qquad \qquad
\GeneralUpDown{\mathcal W}{4}{+} \coloneqq  ~ \left\{(r_0, \imu r_0^2):  0< r_0 < -1 + \sqrt{2}  \right\},
\end{align*}
and for $\eps=-1$:
\begin{align*}
\GeneralUpDown{\mathcal W}{1}{-} \coloneqq & ~\left\{(r_0, \imu r_0^2): 0 < r_0 <  -1+\sqrt{2} \right\}, \qquad \qquad
\GeneralUpDown{\mathcal W}{3}{-} \coloneqq  ~ \left\{(r_0, \imu r_0^2):   r_0 > 1\right\}\\
\GeneralUpDown{\mathcal W}{4}{-} \coloneqq & ~\left\{(r_0, \imu r_0^2):  r_0 > 1 , r_0 \neq \sqrt{2+\sqrt{3}}\right\},
\end{align*}
and write
\begin{align*}
\GeneralUpDown{\mathcal W}{4,1}{-} \coloneqq  ~\left\{(r_0, \imu r_0^2):  1 < r_0 < \sqrt{2+\sqrt{3}}\right\}, \qquad 
\GeneralUpDown{\mathcal W}{4,2}{-} \coloneqq  ~\left\{(r_0, \imu r_0^2):  r_0 > \sqrt{2+\sqrt{3}}\right\}.
\end{align*}
For $k=1,3$ and $k=4$ if $\eps = +1$ we let  $p_0 =(r_0,\imu r_0^2)\in \GeneralUpDown{\mathcal W}{k}{\eps} \subset \HeisenbergOne{2}$ such that $\SimpleMapTranslationParticularValueTwo{k}{\eps} \in \FTwo$. In the case where $k=4$ and $\eps = -1$ we have that $g_{w,p_0}(0) <0$ in $\SimpleMapTranslationParticularValueTwo{4}{-}$ for $p_0 \in \GeneralUpDown{\mathcal W}{4,2}{-}$. Here we apply the automorphism $\pi'$ from \eqref{eq:Pi} to $\SimpleMapTranslationParticularValueTwo{4}{-}$. We consider $(\sigma,\sigma') \in \Isotropies$ according to \autoref{proposition:NormalForm2Nondeg} and define
\begin{align*}
H_k \coloneqq \sigma' \circ t'_{ \SimpleMapParticularValueTwo{k}{\eps}(p_0)} \circ \SimpleMapParticularValueTwo{k}{\eps} \circ t_{p_0} \circ \sigma.
\end{align*}
In \hyperref[appendix:StandardParameters]{Appendix D} we list the standard parameters such that $H_k \in \NTwo$. The notation we use for the standard parameters is from \eqref{eq:sigma} and \eqref{eq:sigma} and we write $t_k$ for a standard parameter $t$ occurring in $H_k$. If $k=4$ and $\eps = -1$ we normalize the mapping $\pi' \circ \SimpleMapTranslationParticularValueTwo{4}{-}$ for $p_0 \in \GeneralUpDown{\mathcal W}{4,2}{-}$ analogously to the case $p_0 \in \GeneralUpDown{\mathcal W}{4,1}{-}$. The standard parameters can be taken as in the case when $g_{w,p_0}(0) > 0$, except $a_1'$ and $a_2'$ as well as $c_1'$ and $c_2'$ have to be interchanged. \\
We would like to show $H_k  \equiv \MapOneParameterFamilyThree{\ell_k}{s_k^{\eps}(p_0)}{\eps}$, where $\ell_1=\ell_3=2, \ell_4=3$ and $s_k^{\eps}(p_0): \GeneralUpDown{\mathcal W}{k}{\eps} \rightarrow \R$ is given as follows:
\begin{align*}
s_1^{\eps}(p_0) =  \frac{2 r_0 (1+ \eps r_0^2)}{(1- \eps r_0^2)^2},\qquad \quad
s_3^{-}(p_0) =  \frac{1+ r_0^4} {4 r_0^2},\qquad \quad
s_4^{\eps}(p_0) =   \frac{\eps - 33 r_0^4 + 33 \eps r_0^8 + r_0^{12}}{12 \sqrt{3}r_0^2 (-\eps + r_0^4)^2}.
\end{align*}
By \autoref{cor:jetDetermination} it suffices to show  
\begin{align}
\label{eq:JetDetRenormal}
j_0\left(H_k \right) = j_0 \left(\MapOneParameterFamilyThree{\ell_k}{s_k^{\eps}(p_0)}{\eps} \right).
\end{align}
We set $F_k \coloneqq H_k - \MapOneParameterFamilyThree{\ell_k}{s_k^{\eps}(p_0)}{\eps}$ and write $F_k =(f^k,g^k)$ for the components. Since $H_k \in \NTwo$ we only need to consider the coefficients $f^k_{w^2}(0)$ and $f^k_{z^2 w}(0)$ in $F_k$. We denote $\MapOneParameterFamilyThree{k}{s}{\eps} =  \bigl(f_{k,s}^{\eps},g_{k,s}^{\eps}\bigr)$. The right-hand side of \eqref{eq:JetDetRenormal} contains the following coefficients, where we set $s = s_k^{\eps}(p_0)$:
\begin{align}
\label{eq:coeffG2}
f^{\eps}_{2,s w^2}(0) = & \left(\frac s 2, \frac {s^2} 2 \right), \qquad f^{\eps}_{2,s z^2 w}(0) =  \left(2 \imu s, \frac 1 2 \left(\eps + s^2\right)\right),\\
\label{eq:coeffG3}
f^{\eps}_{3,s w^2}(0) = & \left(\frac s 2, -\frac {\eps} 8 \right), \qquad f^{\eps}_{3,s z^2 w}(0) = \left(2 \imu s, \frac {3 \imu \eps} 8\right).
\end{align}
Then we compute for $H_k=(h^k,i^k)=(h_1^k,h_2^k,i^k))$ the coefficients $h^k_{w^2}(0)$ and $h^k_{z^2 w}(0)$. We obtain that $h^1_{1w^2}(0) = s_1^{\eps}(p_0)/2$, $h^3_{1w^2}(0) = s_3^{-}(p_0)/2$ and $h^4_{1w^2}(0) = s_4^{\eps}(p_0)/2$. Further straightforward computations show that the remaining coefficients $h^k_{2 w^2}(0)$ and $h^k_{z^2 w}(0)$ are of the form as given in \eqref{eq:coeffG2} and \eqref{eq:coeffG3}, where $s$ is replaced by the appropriate $s_k^{\eps}(p_0)$. This implies $H_k  \equiv \MapOneParameterFamilyThree{\ell_k}{s_k^{\eps}(p_0)}{\eps}$ for $k=1,3,4$ as claimed above.\\
Next we analyze to which mappings $H_k$ is equivalent. By \eqref{eq:JetDetRenormal} it suffices to study the set $s_k^{\eps}\bigl(\GeneralUpDown{\mathcal W}{k}{\eps}\bigr)$. For $\eps = +1$ the function $s_1^+$ is strictly increasing in $\GeneralUpDown{\mathcal W}{1}{+}$ and $s_4^+$ is strictly decreasing in $\GeneralUpDown{\mathcal W}{4}{+}$. Further we have $s_1^+(\GeneralUpDown{\mathcal W}{1}{+}) = s_4^+(\GeneralUpDown{\mathcal W}{4}{+})= \R^+$. This proves (i) and (ii) of \autoref{theorem:ReductionFinite}.\\
Then for $\eps = -1$ it holds that $s_k^-$ is strictly increasing in $\GeneralUpDown{\mathcal W}{k}{-}$ for $k=1,3,4$. Moreover we have $0<s_1^-(\GeneralUpDown{\mathcal W}{1}{-})<1/2, s_3^-(\GeneralUpDown{\mathcal W}{3}{-})>1/2$ and $s_4^-(\GeneralUpDown{\mathcal W}{4}{-}) = \R^+\setminus\{1/2\}$. More precisely we have  $s_4^-(\GeneralUpDown{\mathcal W}{4,1}{-}) = \{x \in \R: 0< x< 1/2\}$ and $s_4^-(\GeneralUpDown{\mathcal W}{4,2}{-}) = \{x \in \R: x > 1/2\}$. This proves the remaining claims in \autoref{theorem:ReductionFinite}.
 \end{proof}

\section{Proof of the Classification}
\label{sec:ClassificationFinale}

\begin{proof}[Proof of \autoref{theorem:MainTheorem}]
Let $U \subset \C^2$ be an open, connected and sufficiently small neighborhood of $p \in \Sphere{2}$ and $H: U \rightarrow \C^3$ a holomorphic mapping satisfying $H(U \cap \Sphere{2}) \subset \Hyperquadric{\eps}{3}$. We change coordinates as described in \autoref{sec:setup} and  define $S_1(H) \coloneqq T_3 \circ H \circ T_2^{-1}$, where we use the biholomorphisms $T_3$ and $T_2^{-1}$ from \eqref{eq:Cayley} and \eqref{eq:CayleyInv} respectively. We obtain a holomorphic mapping $S_1(H): U \rightarrow \C^3$, which satisfies $S_1(H)(0) = 0$ and maps $V \cap \HeisenbergOne{2}$ to $\HeisenbergTwo{\eps}{3}$, where $V$ is a sufficiently small open and connected neighborhood of $0$. By \Autoref{proposition:FirstProperties}, $S_1(H)$ is either $\MapMainTheoremTwo{1}{\eps}$ or $\MapMainTheoremOne{7}$, after changing coordinates to obtain the corresponding mappings from $\Sphere{2}$ to $\Hyperquadric{\eps}{3}$, or belongs to $\FTwo$.\\
 We define $S_2(H) \coloneqq \sigma_1' \circ H \circ \sigma_1$, where $(\sigma_1,\sigma_1') \in \Isotropies$. If $S_1(H)\in \FTwo$ we consider $S_2(S_1(H))$ and choose appropriate isotropies such that $S_2(S_1(H)) \in \NTwo$ according to \Autoref{proposition:NormalForm2Nondeg}. From \Autoref{theorem:ReductionOneParameterFamilies2} we obtain that $S_2(S_1(H)) \in \{\MapOneParameterFamilyTwo{1}{\eps},\MapOneParameterFamilyThree{2}{s}{\eps}, \MapOneParameterFamilyThree{3}{s}{\eps}\}$. Next we apply \autoref{theorem:ReductionFinite}. We obtain that if $\eps = +1$ the class $\FTwo$ consists of at most $3$ orbits given by the mappings $\MapOneParameterFamilyTwo{1}{+}, \MapParticularValueTwo{1}{+}$ and $\MapParticularValueTwo{4}{+}$ and if $\eps = - 1$ we have at most $5$ orbits in $\FTwo$ given by the mappings $\MapOneParameterFamilyTwo{1}{-}, \MapParticularValueTwo{1}{-},\MapParticularValueTwo{2}{-},\MapParticularValueTwo{3}{-}$ and $\MapParticularValueTwo{4}{-}$.\\
 We show how to deduce the mappings listed in \autoref{theorem:MainTheorem} from the above list of maps from \autoref{theorem:ReductionOneParameterFamilies2} and \autoref{theorem:ReductionFinite}. We introduce $S_3(H) \coloneqq \sigma'_2  \circ \Bigl(t'_{H(p)}  \circ H \circ t_{p}\Bigr) \circ \sigma_2$, where $p\in \HeisenbergOne{2}$ and $(\sigma_2, \sigma_2') \in \Isotropies$. The map $S_1^{-1}(\MapOneParameterFamilyTwo{1}{\eps})$ is equivalent to $\MapMainTheoremTwo{2}{\eps}$, since composing $\MapOneParameterFamilyTwo{1}{\eps}$ with dilations $(z,w) \mapsto (\sqrt{2} z, 2 w)$ and then applying $S_1^{-1}$ results in the mapping $\MapMainTheoremTwo{2}{\eps}$. It holds that $S_1^{-1}(\SimpleMapParticularValueTwo{1}{\eps}) = \MapMainTheoremTwo{3}{\eps}$, where $\SimpleMapParticularValueTwo{1}{\eps}$ from \autoref{def:newSimpleMaps} is equivalent to $\MapParticularValueTwo{1}{\eps}$. We have $S_1^{-1}(\SimpleMapParticularValueTwo{2}{-}) = \MapMainTheoremOne{5}$, where $\SimpleMapParticularValueTwo{2}{-}$ is equivalent to $\MapParticularValueTwo{2}{-}$, see \autoref{example:G2Minus}. \autoref{example:GThreeMinus2} shows that $S_1^{-1}(\SimpleMapParticularValueTwo{3}{-})$, where $\SimpleMapParticularValueTwo{3}{-}$ is equivalent to $\MapParticularValueTwo{3}{-}$, is equivalent to $\MapMainTheoremOne{6}$. We apply the isotropy $(z_1',z_2',w') \mapsto (z_1'/2,\imu z_2'/2,w'/4)$ and then $S_1^{-1}$ to the map in $\SimpleMapParticularValueTwo{3}{-}$ to obtain $\MapMainTheoremOne{6}$. Finally we have $S_1^{-1}(\SimpleMapParticularValueTwo{4}{\eps}) = \MapMainTheoremTwo{4}{\eps}$, where $\SimpleMapParticularValueTwo{4}{\eps}$ is equivalent to $\MapParticularValueTwo{4}{\eps}$ as we noted in \autoref{remark:FoundFaransMapHeisenberg}.\\ 
Next we show that the mappings listed in \autoref{theorem:MainTheorem} are not equivalent to each other. The following lemma whose proof is easy is stated in \cite[Lemma 2.1]{meylan}.

\begin{lemma}[label=lem:degInvariant]
Let $H \in \FTwo$ and $(\phi,\phi') \in \Aut(\HeisenbergOne{2},0) \times \Aut(\HeisenbergTwo{\eps}{3},0)$, then $\widetilde H \coloneqq \phi' \circ H \circ \phi$ satisfies $\deg \widetilde H =  \deg H$.
\end{lemma}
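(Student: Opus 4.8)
The plan is to reduce everything to the two inequalities $\deg \widetilde H \leq \deg H$ and $\deg H \leq \deg \widetilde H$; the second will follow from the first by symmetry, since $\phi$ and $\phi'$ are invertible inside $\Aut(\HeisenbergOne{2},0)$ and $\Aut(\HeisenbergTwo{\eps}{3},0)$ respectively, so that $H = (\phi')^{-1} \circ \widetilde H \circ \phi^{-1}$ has the same form. Thus the whole content is the single inequality $\deg(\phi' \circ H \circ \phi) \leq \deg H$. (Here, as $\deg$ is only defined for rational mappings, it is tacitly understood that $H$ is rational — which, by \autoref{theorem:ReductionOneParameterFamilies2} and the reduction carried out below in the proof of \autoref{theorem:MainTheorem}, holds for every map of $\FTwo$ up to equivalence, and in particular for all maps to which the lemma is applied; throughout we work with the global linear fractional representatives of the automorphisms involved.)

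First I would record that every element of $\Aut(\HeisenbergOne{2},0)$ and of $\Aut(\HeisenbergTwo{\eps}{3},0)$ is a \emph{linear fractional map}, i.e.\ of the form $z \mapsto (Az+b)/(\langle c,z\rangle + d)$ whose associated matrix in homogeneous coordinates is invertible. Indeed, by \autoref{rem:FormOfAutomorphism} each such automorphism is the composition of a translation, namely $t_p$ from \eqref{eq:TranslationC2} or $t'_{p'}$ from \eqref{eq:TranslationC3}, which is affine, with an isotropy, namely $\sigma_\gamma$ from \eqref{eq:sigma}, respectively $\sigma'_{\gamma'}$ from \eqref{eq:tau} possibly followed by $\pi'$ from \eqref{eq:Pi}; inspection of these explicit formulas shows that each isotropy has linear numerator and affine denominator, hence is linear fractional. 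Since in homogeneous coordinates a linear fractional map is induced by a linear map, the composition of two of them is again linear fractional, which proves the claim.

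The only computational ingredient is an elementary sub-lemma: if $R = (P_1,\dots,P_{N'})/Q$ is a reduced rational map of degree $d$ and $\psi$, $\chi$ are linear fractional maps of matching dimensions, then $\psi \circ R \circ \chi$ is rational of degree at most $d$. For $\psi(y) = (Ay+b)/(\langle c,y\rangle + d_0)$ one computes $\psi \circ R = (AP + bQ)/(\langle c,P\rangle + d_0 Q)$, and every entry of the numerator as well as the denominator is a polynomial of degree $\leq \max_k(\deg P_k,\deg Q) = d$; passing to the reduced fraction can only lower this. For $\chi = (L_1,\dots,L_N)/m$ with $\deg L_i, \deg m \leq 1$, multiplying the numerator and denominator of $P_k(\chi)$ and of $Q(\chi)$ through by $m^{\deg P_k}$ and $m^{\deg Q}$ makes them polynomial of degree $\leq \deg P_k$ and $\leq \deg Q$, and putting all $N'+1$ components of $R\circ\chi$ over the common denominator $m^{d}$ exhibits each entry as a polynomial of degree $\leq d$; again reducing only lowers the degree. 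Chaining the two estimates gives $\deg(\psi \circ R \circ \chi) \leq \deg(R \circ \chi) \leq d$.

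Applying the sub-lemma with $R = H$, $\psi = \phi'$, $\chi = \phi$ yields $\deg \widetilde H \leq \deg H$; applying it once more with $R = \widetilde H$, $\psi = (\phi')^{-1}$, $\chi = \phi^{-1}$ — which are again automorphisms of the respective hypersurfaces and hence linear fractional — gives $\deg H \leq \deg \widetilde H$, so $\deg \widetilde H = \deg H$. I do not expect a genuine obstacle: the lemma is elementary. The only points requiring a little care are the bookkeeping of degrees when clearing denominators in the pre-composition step, and the observation that reducing a rational expression to lowest terms never raises its degree — which is exactly what makes the reverse inequality available and forces equality rather than merely $\leq$.
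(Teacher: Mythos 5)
Your proof is correct and complete. Note that the paper itself does not prove this lemma at all --- it only cites \cite[Lemma 2.1]{meylan} with the remark that the proof is easy --- and your argument is exactly the standard one behind that citation: every element of $\Aut(\HeisenbergOne{2},0)$ and $\Aut(\HeisenbergTwo{\eps}{3},0)$ is linear fractional by \autoref{rem:FormOfAutomorphism} together with \eqref{eq:sigma}--\eqref{eq:TranslationC3}, pre- and post-composition with linear fractional maps cannot raise the degree of a (not necessarily reduced) polynomial representation, reduction only lowers it, and invertibility of $(\phi,\phi')$ supplies the reverse inequality. Your parenthetical remark that $\deg$ only makes sense for rational $H$ (which is the case for every map the lemma is applied to, by \autoref{theorem:ReductionOneParameterFamilies2}) is a point the paper leaves implicit, and it is right to flag it.
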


We need the following lemma to treat the mapping $\SimpleMapParticularValueTwo{2}{-}$.

\begin{lemma}[label=lemma:FixedMapping]
We set $G \coloneqq \SimpleMapParticularValueTwo{2}{-}$ and let $H \in \FTwo$ and $(\phi,\phi') \in \Aut(\HeisenbergOne{2},0) \times \Aut(\HeisenbergTwo{\eps}{3},0)$ such that $H = \phi' \circ G \circ \phi$. Then $G$ is isotropically equivalent to $H$.
\end{lemma}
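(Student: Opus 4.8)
The plan is to replace the arbitrary automorphisms $(\phi,\phi')$ by a recentering of $G$, and then pin down the recentered map through the normal form of \autoref{proposition:NormalForm2Nondeg}. Note that $\eps = -1$ throughout, since $G = \SimpleMapParticularValueTwo{2}{-}$ maps into $\HeisenbergTwo{-}{3}$. First I would use \autoref{rem:FormOfAutomorphism} to write $\phi = t_p \circ \sigma$ with $\sigma \in \Auto(\HeisenbergOne{2},0)$ and $p \coloneqq \phi(0) \in \HeisenbergOne{2}$, and set $G_p \coloneqq t'_{G(p)} \circ G \circ t_p$, the recentering of $G$ at $p$; since $t_p(0) = p$ and $t'_{G(p)}(G(p)) = 0$, the map $G_p$ is a germ at $0$ with $G_p(0) = 0$, and $G \circ t_p = (t'_{G(p)})^{-1} \circ G_p$. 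Hence
\[
H = \phi' \circ G \circ \phi = \phi' \circ (t'_{G(p)})^{-1} \circ G_p \circ \sigma = \sigma' \circ G_p \circ \sigma ,
\]
where $\sigma' \coloneqq \phi' \circ (t'_{G(p)})^{-1}$. This $\sigma'$ fixes $0$, because $(t'_{G(p)})^{-1}(0) = G(p)$ and $\phi'(G(p)) = \phi'(G(\phi(0))) = H(0) = 0$ (using $\phi(0) = p$); so $\sigma' \in \Auto(\HeisenbergTwo{-}{3},0)$ by \autoref{rem:FormOfAutomorphism}. Thus $H$ is isotropically equivalent to $G_p$, and it suffices to prove that $G_p$ is isotropically equivalent to $G$.

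Since the conditions \eqref{eq:F2Cond} defining $\FTwo$ are preserved by isotropies and the isotropies form a group, $G_p = (\sigma')^{-1} \circ H \circ \sigma^{-1}$ lies in $\FTwo$ together with $H$; in particular $G$ is holomorphic at $p$, so $G_p$ is well defined. By \autoref{proposition:NormalForm2Nondeg} there are isotropies $(\sigma_2,\sigma_2')$ carrying $G_p$ to a normalized map $\widehat{G_p} \coloneqq \sigma_2' \circ G_p \circ \sigma_2 \in \NTwo$, which by \autoref{theorem:ReductionOneParameterFamilies2} is one of $\MapOneParameterFamilyTwo{1}{-}$, $\MapOneParameterFamilyThree{2}{s}{-}$, $\MapOneParameterFamilyThree{3}{s}{-}$. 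Because $G = \SimpleMapParticularValueTwo{2}{-} \in \FTwo$ is a reduced rational map of degree $2$ and $\widehat{G_p} = (\sigma_2' \circ t'_{G(p)}) \circ G \circ (t_p \circ \sigma_2)$ is obtained from $G$ by automorphisms, \autoref{lem:degInvariant} gives $\deg \widehat{G_p} = \deg G = 2$; combined with \autoref{lem:WhenDeg3IsDeg2} (which says $\MapOneParameterFamilyThree{3}{s}{-}$ has degree $2$ only when $s = 1/2$, where it equals $\MapOneParameterFamilyThree{2}{1/2}{-}$) this leaves $\widehat{G_p} \in \{\MapOneParameterFamilyTwo{1}{-}\} \cup \{\MapOneParameterFamilyThree{2}{s}{-} : s \geq 0\}$.

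The remaining point is to determine $\widehat{G_p}$ explicitly. I would parametrize $p = (r e^{\imu\theta}, v + \imu r^2) \in \HeisenbergOne{2}$, form $G_p$, and carry out the normalization of \autoref{proposition:NormalForm2Nondeg} — solving the normalization equations for the standard parameters and substituting — exactly as in the renormalization step of the proof of \autoref{theorem:ReductionFinite}. This expresses the coefficients belonging to $j_0(\widehat{G_p})$ as explicit functions of $(r,\theta,v)$, and the claim to be checked is that they coincide with those of $\MapParticularValueTwo{2}{-} = \MapOneParameterFamilyThree{2}{1/2}{-}$; equivalently, that the parameter $s = 2 f_{1 w^2}(0)$ of $\widehat{G_p}$ comes out identically equal to $1/2$ — in contrast with the non-constant renormalization functions $s_k^{-}(p_0)$ attached to $\SimpleMapParticularValueTwo{k}{-}$ for $k = 1,3,4$. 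Granting this, \autoref{cor:jetDetermination} forces $\widehat{G_p} = \MapParticularValueTwo{2}{-}$, so $G_p$ is isotropically equivalent to $\MapParticularValueTwo{2}{-}$, which by \autoref{example:G2Minus} is isotropically equivalent to $G = \SimpleMapParticularValueTwo{2}{-}$. Together with the first paragraph this proves the lemma.

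The main obstacle is precisely this last verification: one must show that the normal form of the recentered family $G_p$ does not depend on $p$, i.e. that the renormalization function of $\SimpleMapParticularValueTwo{2}{-}$ is the constant $1/2$. This is a bulky but mechanical symbolic computation — running the normalization system with a symbolic base point and checking the resulting cancellations — of the same nature as those behind \autoref{theorem:ReductionFinite}, presumably carried out with \emph{Mathematica}; everything else is formal bookkeeping with the decomposition of automorphisms into translations and isotropies.
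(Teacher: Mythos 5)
Your proposal is correct and follows essentially the same route as the paper: decompose $(\phi,\phi')$ into translations and isotropies via \autoref{rem:FormOfAutomorphism}, reduce to showing that the recentered germ $G_p=t'_{G(p)}\circ G\circ t_p$ is isotropically equivalent to $G$, and settle that by a symbolic renormalization at an arbitrary base point $p$ followed by the jet determination of \autoref{cor:jetDetermination}. The only (minor) divergence is that the paper normalizes $G_p$ to the ad hoc normalization conditions of \autoref{example:G2Minus}, which $G$ itself already satisfies (with the explicit parameters in Appendix~D), whereas you normalize into $\NTwo$ and identify the result as $\MapOneParameterFamilyThree{2}{1/2}{-}$ — your added degree argument via \autoref{lem:degInvariant} and \autoref{lem:WhenDeg3IsDeg2} is a sensible way to narrow the candidates before that computation, which is of the same mechanical nature as the one the paper performs.
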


\begin{proof}
By \autoref{rem:FormOfAutomorphism} we write $(\phi,\phi') = (t_p \circ \sigma,\sigma' \circ t_{p'}')$ for $(p,p') \in \HeisenbergOne{2} \times  \HeisenbergTwo{\eps}{3}$ and $(\sigma,\sigma') \in \Isotropies$. Since $H \in \mathcal F$ we conclude that the map $t'_{p'} \circ G \circ t_p$ fixes $0$ and satisfies the conditions in \eqref{eq:F2Cond}, which implies $p'=G(p)$ and hence $G_p \in \FTwo$. Thus we have $H = \sigma' \circ t'_{G(p)} \circ G \circ t_p \circ \sigma$. Since $H \in \FTwo$ we write $H = \phi' \circ \widetilde H \circ \phi$ where $\widetilde H \in \NTwo$ and $(\phi,\phi') \in \Isotropies$. Then we consider 
\begin{align}
\label{eq:G12MinusOrbit}
\widetilde H = \psi' \circ t'_{G(p)} \circ G \circ t_p \circ \psi,
\end{align}
where $(\psi,\psi') = (\sigma \circ \phi^{-1},\phi'^{-1} \circ \sigma')\in \Isotropies$. We want to conclude that $\widetilde H \equiv G$, which proves our claim. We investigate for which $p \in \HeisenbergOne{2}$ we have $G_p \in \FTwo$. We compute the following sets for $G$, where we write $p=(r e^{\imu\theta},v + \imu r^2) \in \HeisenbergOne{2}$ with $r\geq 0$ and $\theta,v \in \R$:
\begin{align*}
D_G =   \left\{p \in \HeisenbergOne{2}: 1+  \sqrt{2} r e^{\imu \theta} = 0 \right\}, \quad N_G = \emptyset, \quad T_G  =  \left\{p \in A_G: e^{\imu \theta} + \sqrt{2} r (1+ e^{2 \imu \theta}) = 0\right\}.
\end{align*}
The triviality of $N_G$ follows from the considerations in \autoref{example:G2Minus}. Now we let the parameters $p$ in $G_p$ be arbitrary in $A_G \setminus T_G$ such that we can normalize $G_p$ according to the normalization conditions given in \autoref{example:G2Minus}. More precisely we consider $\widetilde H$ from \eqref{eq:G12MinusOrbit} above, such that $\widetilde H$ satisfies these new normalization conditions. We list all necessary standard parameters in \hyperref[appendix:StandardParameters]{Appendix D}. Then we apply the jet determination result for $\FTwo$ and consider the coefficients of $\widetilde H$ according to \autoref{cor:jetDetermination} to see that $\widetilde H \equiv G$.
\end{proof}

Now we are ready to prove that the mappings listed in \autoref{theorem:MainTheorem} are not equivalent to each other. We start with the easy cases: $H_7$ is not equivalent to any other map of the list, since it is not immersive. Also $\MapMainTheoremTwo{1}{\eps}$ cannot be equivalent to any other map, since the map is $(1,1)$-degenerate everywhere in its domain and by \autoref{proposition:FirstProperties} the mappings $\MapMainTheoremTwo{2}{\eps}, \MapMainTheoremTwo{3}{\eps}, \MapMainTheoremTwo{4}{\eps}, H_5$ and $H_6$ have points in their domains, where they are $2$-nondegenerate. By \autoref{lem:degInvariant} we observe that $\MapMainTheoremTwo{4}{\eps}$ is not equivalent to any other map in the list. It remains to distinguish mappings of degree $2$.\\
First we treat the case $\eps = +1$. From \autoref{ex:G1plusEverywhereNonDeg} we know that  $\MapOneParameterFamilyTwo{1}{+}$ is $2$-nondegenerate everywhere in its domain, which is equivalent to $\MapMainTheoremTwo{2}{+}$. The map $\MapMainTheoremTwo{3}{+}$ is equivalent to $\MapParticularValueTwo{1}{+}=\MapOneParameterFamilyThree{2}{0}{+}$, which has points in its domain, where the map is not $2$-nondegenerate as can be easily observed. For example the point $(2, 4 \imu) \in N_{\MapParticularValueTwo{1}{+}}$. Thus $\MapMainTheoremTwo{2}{+}$ and $\MapMainTheoremTwo{3}{+}$ are not equivalent.\\ 
Next we consider the case $\eps = -1$. First we note that \autoref{example:G2Minus} shows the maps $\MapMainTheoremTwo{3}{-}$, which is equivalent to $\MapParticularValueTwo{1}{-} = \MapOneParameterFamilyThree{2}{0}{-}$ and $H_5$, which is equivalent to $\MapParticularValueTwo{2}{-} = \MapOneParameterFamilyThree{2}{1/2}{-}$, are both $2$-nondegenerate everywhere in their domains. The maps $\MapMainTheoremTwo{2}{-}$ and $H_6$, which are equivalent to $\MapOneParameterFamilyTwo{1}{-}$ and $\MapParticularValueTwo{3}{-} = \MapOneParameterFamilyThree{2}{1}{-}$ respectively, do contain points in their domains, where the maps are not $2$-nondegenerate. More precisely, $\MapOneParameterFamilyTwo{1}{-}$ is not $2$-nondegenerate excactly at the points $p = (e^{\imu t}, \imu)$ for $t \in \R$ for which the space $E_1'(p)$ is $1$-dimensional. This implies that there are no points in the domain where the map is $(1,1)$-degenerate. We computed in \autoref{example:GThreeMinus2}, that there is a mapping which is $(1,1)$-degenerate at for example $(0,1) \in \HeisenbergOne{2}$ and is equivalent to $H_6$ by \autoref{theorem:ReductionFinite}. Thus the maps $\MapMainTheoremTwo{2}{-}$ and $H_6$ are both not equivalent to any other map of the list.\\
For a mapping $F$ we introduce the set $\stab_0(F) \coloneqq \{(\sigma, \sigma') \in \Isotropies: \sigma' \circ F \circ \sigma = F\}$ called the \textit{isotropic stabilizer of $F$}. Next we observe that if we let $H\in \NTwo$ and $F = \varphi' \circ H \circ \varphi$, where $(\varphi,\varphi') \in \Isotropies$, it is a well-known fact that
\begin{align}
\label{eq:conjIsotropicStab}
 \stab_0(F) = \left\{(\varphi^{-1} \circ \sigma \circ \varphi,\varphi' \circ \sigma' \circ {\varphi'}^{-1}) \in \Isotropies:  (\sigma,\sigma') \in \stab_0(H)\right\}.
\end{align}
It remains to distinguish $\MapMainTheoremTwo{3}{-}$ from $H_5$. On the one hand we observe that for $\MapOneParameterFamilyThree{2}{0}{-}$ and $|u|=1$ the isotropies $\bigl(\sigma(z,w), \sigma'(z_1',z_2',w')\bigr) = \bigl(u z, w,z_1'/u,z_2'/u^2,w'\bigr)$ belong to $\stab_0(\MapOneParameterFamilyThree{2}{0}{-})$. On the other hand the map $\MapOneParameterFamilyThree{2}{1/2}{-}$ has a trivial isotropic stabilizer. This can be concluded from the equations in the proof of \autoref{theorem:NonIntersectingOrbits}, since here we are in the first case, where $s_1=s_2=1/2$. In \autoref{lemma:FixedMapping} we concluded that any map in $\FTwo$ to which $\MapOneParameterFamilyThree{2}{1/2}{-}$ is equivalent must be isotropically equivalent, thus by \eqref{eq:conjIsotropicStab} has a trivial isotropic stabilizer. Hence $\MapMainTheoremTwo{3}{-}$ and $H_5$ are not equivalent. \\
It remains to prove the last statement of \autoref{theorem:MainTheorem}. We show equivalence of $S_1(L_3)$ and $\MapParticularValueTwo{1}{-}$, $S_1(L_4)$ and $\MapParticularValueTwo{2}{-}$, $S_1(L_5)$ and $\MapParticularValueTwo{3}{-}$ and finally equivalence of $S_1(L_6)$ and $\MapParticularValueTwo{4}{-}$.\\
We start by showing the first equivalence by considering $S_3(S_1(L_3))$ and defining $u' = -1, \lambda = 1/2, a'_1=-1, c_2' = \imu / 2$ and the rest of the occurring parameters trivially. Then we have $S_3(S_1( L_3)) = \MapParticularValueTwo{1}{-}$. In the case of the mapping $S_1(L_4)$ we define
\begin{align*}
&p=(2, 4 \imu), \quad c = \frac{11 \imu} 4, \quad u = -1, \quad \lambda = 3,  \quad  \lambda' = \frac{2}{3~3^{3/4}}, \\
& a'_1 = -\frac{2}{3^{1/4}} - \frac{3^{1/4}} 8, \quad  a'_2 =-\frac{2}{3^{1/4}} + \frac{3^{1/4}} 8, \quad  c_1' = -\frac{\imu (272 - 5 \sqrt{3})}{144}, \quad c_2' = \frac{\imu (272+5 \sqrt{3})}{144},
\end{align*} 
and the rest of the parameters trivially. With these choices we obtain $S_3(S_1(L_4)) = \MapParticularValueTwo{2}{-}$. Next we want to see that $S_1(L_5)$ is equivalent to $\MapParticularValueTwo{3}{-}$.  We define the following parameters for $S_3(S_1(L_5))$
\begin{align*}
& p=\Bigl(\sqrt{2},  -1 + 2 \imu \Bigr),  \quad c = \frac{4 + 3 \imu}{8 \sqrt 5},  \quad u = -\frac{1-2 \imu}{\sqrt{5}}, \quad \lambda = \frac 1 {\sqrt{2}}, \quad r = \frac 1 8, \quad r' = 3 \sqrt{2},\\
& \lambda' = 4 ~2^{1/4}, \quad  u' = -\frac {2 - 11 \imu} {5 \sqrt 5}, \quad a'_1 = \frac{-1 + 7 \imu} 5, \quad a'_2  =\frac{-4 + 3 \imu} 5, \quad c_1' = -\frac{1 - 5 \imu}{2^{3/4}}, \quad c_2' = -\frac{\imu}{2^{3/4}},
\end{align*}
and the remaining parameters we choose trivially. Then we have $S_3(S_1(L_5)) = \MapOneParameterFamilyThree{2}{\frac {\sqrt{5}} 4}{-}$, which, since $\sqrt{5}/4 > 1/2$, is equivalent to $\MapParticularValueTwo{3}{-}$ by \autoref{theorem:ReductionFinite}. Finally we consider $S_1(L_6)$ and we want to see that this mapping is equivalent to $\MapMainTheoremTwo{4}{-}$. Here we note that after the linear change of coordinates $(z,w) \mapsto(\imu w, z)$ and $(z_1',z_2',w') \mapsto (- z_1',- \imu z_2',w')$ in $\C^2$ and $\C^3$, $L_6$ is the same mapping as $\MapMainTheoremTwo{4}{-}$, which we know is equivalent to $\MapParticularValueTwo{4}{-}$. This completes the proof of \autoref{theorem:MainTheorem}. 
\end{proof}

\section*{Appendix A: Formula for Jet Parametrization}
\label{appendix:ParametrizationFormula}

In \Autoref{lemma:BasicIdentity2Nondeg} we have the following formulas: Denote $\Psi=(f_1,f_2,g)$. We order the monomials by degree and by assigning the weight $1$ to $z$ and the weight $2$ to the variable $\chi$. The numerator of $f_1(z,2 \imu z \chi)$ is the following expression:
\begin{align*}
& \quad 2 \eps z  
+ 6 A_2 z \chi 
+ \imu C_{22}  z^3 
+ 4 \imu  \eps  B_{21} z^2 \chi 
+ 6 \eps B_2 z \chi^2 
+ \Bigl(2 \eps A_2 + A_{22} - C_{13} \Bigr) z^3 \chi \\
& - 2 \Bigl(3 \imu A_3 + 3 \eps B_{12} +  A_2 (7 \eps - 3 \imu B_{21}) \Bigr) z^2 \chi^2  
+  2  A_2 B_2 z \chi^3  \\
& +  \Bigl(6 A_2^2 +  \imu A_{13} + \eps (-1 - 2 B_{21}^2+ B_{22} +  C_3) - \imu C_4 - 2 \imu B_2 C_{22}\Bigr) z^3 \chi^2 \\
& - 2  \Bigl(5 A_2^2 + 4 \imu \eps B_3 +  4 A_2 B_{12} + B_2 (6 - 2 \imu \eps B_{21})\Bigr) z^2 \chi^3 \\
& +  \Bigl(-A_4 - 2 A_{22} B_2 + B_{12} + 2 A_3 B_{21} + \imu \eps (4 A_3 + B_{13} - 4 B_{12} B_{21}) \\
    & \qquad + A_2  (5 + 4 \eps B_2 - 4 \imu \eps B_{21} - 2 B_{21}^2+ B_{22} +  3 C_3) + 2 B_2 C_{13}\Bigr) z^3 \chi^3 \\
& + 2 \imu \Bigl(B_2 (4 A_3 + \imu \eps B_{12}) + A_2 \bigl(-5 B_3 + B_2 (5 \imu \eps + B_{21})\bigr)\Bigr)  z^2 \chi^4 \\
& + \Bigl(2 \imu B_3 + 2 \imu A_3 B_{12} + \imu A_2 \bigl(4 A_3 + B_{13} + B_{12} (-6 \imu \eps - 4 B_{21})\bigr) + 2 A_2^2 (5 \eps - 2 B_2 - \imu B_{21}) \\
    & \qquad + \eps (-B_4 + 2 B_{12}^2 + 4 B_3 B_{21}) + B_2 \bigl(-2 \imu A_{13} -  6 \imu B_{21} + \eps (2 - B_{22} + 2 C_3) +  2 \imu C_4\bigr) \\
&  \qquad + \imu B_2^2C_{22} \Bigr) z^3 \chi^4- 2  A_2^2 B_2 z^2 \chi^5 \\
& + \Bigl(4 A_2^3 + 2 A_4 B_2 + A_{22} B_2^2+ 3 A_2^2 B_{12} + 5 B_2 B_{12} + 4 \imu \eps B_3 B_{12} - \imu \eps B_2 B_{13} \\
    & \qquad  -  2 A_3 \bigl(B_3 + B_2 (\imu \eps + B_{21}) \bigr) - A_2 \bigl(6 \eps B_2^2+ B_4 - 2 B_{12}^2 + B_3 (-8 \imu \eps - 4 B_{21})  \\
& \qquad + B_2 (-4 + 8 \imu \eps B_{21} + B_{22} + 2 C_3)\bigr) - B_2^2 C_{13}\Bigr) z^3 \chi^5 - 2 \imu B_2 \Bigl(A_3 B_2 - A_2 B_3\Bigr) z^2 \chi^6 \\
&+ \Bigl(-2 \eps B_3^2 + B_2 (4 \imu B_3 + \eps B_4 - 2 \imu A_3 B_{12}) - \imu A_2  \bigl(2 A_3 B_2 - 4 B_3 B_{12} + B_2(6 \imu \eps B_{12} + B_{13})\bigr) \\
     & \qquad + 2 A_2^2 \bigl(-B_2^2+ 2 \imu B_3 +  B_2 (\eps - 2 \imu B_{21})\bigr) + B_2^2(3 \eps + \imu A_{13}- 3 \eps C_3- \imu C_4)\Bigr) z^3 \chi^6  \\
& + \Bigl(B_2 \bigl(-A_4 B_2 + 2 A_3 (-\imu \eps B_2 + B_3)\bigr) + 3 A_2^2 B_2 B_{12}+ A_2 \bigl(-2 B_3^2 + B_2 (4 \imu \eps B_3 + B_4) \\
    & \qquad - B_2^2(-3 + C_3) \bigr)\Bigr) z^3 \chi^7  + 2 \imu  A_2 B_2 \Bigl(-A_3 B_2 + A_2 B_3\Bigr) z^3 \chi^8
\end{align*}
The numerator of $f_2(z,2 \imu z \chi)$ is equal to the following formula:
\begin{align*}
& \quad 2 \eps z^2 
+ 2  A_2 z^3 
+ 6 A_2 z^2 \chi 
+ \Bigl(-1 + C_3\Bigr) z^3 \chi
+ 4 \eps B_2 z^2 \chi^2  
-  \Bigl(2 \imu A_3 +  6 A_2 (\eps + B_2) + \eps B_{12}\Bigr) z^3 \chi^2 \\
& - 4 A_2 B_2 z^2 \chi^3 
+ \Bigl(-4 A_2^2 - 2 \imu \eps B_3 - A_2 B_{12} + B_2 (1 + 4 \imu \eps B_{21} - 3 C_3)\Bigr) z^3 \chi^3 
 - 6 \eps B_2^2 z^2 \chi^4\\
 & + \Bigl(2 \imu B_2 (A_3 + \imu \eps B_{12}) + A_2 \bigl(6 B_2^2- 2 \imu B_3 + 4 B_2 (\eps + \imu B_{21})\bigr)\Bigr) z^3 \chi^4
-  2 A_2 B_2^2 z^2 \chi^5 \\
& +  B_2 \Bigl(4 A_2^2 - 2 A_2 B_{12} + B_2 (-3 - 4 \imu \eps B_{21} + 3 C_3)\Bigr) z^3 \chi^5\\
& +   B_2^2\Bigl(2 \imu A_3 + 3 \eps B_{12} + 2 A_2 (\eps - B_2 - 2 \imu B_{21})\Bigr) z^3 \chi^6  + B_2^2\Bigl(2 \imu \eps B_3 + 3 A_2 B_{12}  \\
&   - B_2 (-3 + C_3)\Bigr) z^3 \chi^7 - 2 \imu B_2^2\Bigl(A_3 B_2 - A_2 B_3\Bigr) z^3 \chi^8
\end{align*}
The numerator of $g(z,2 \imu z \chi)$ is equal to the following formula:
\begin{align*}
& \quad 4 \imu \eps z \chi 
+ 12 \imu A_2 z \chi^2 
- 2 C_{22}   z^3 \chi 
+   \Bigl(4 \imu - 8 \eps B_{21}\Bigr) z^2 \chi^2
+ 12 \imu \eps B_2 z \chi^3 \\
& +  2 \imu \Bigl(4 \eps A_2 + A_{22} - C_{13}\Bigr) z^3 \chi^2 + 12  \Bigl(A_3 - \imu \eps B_{12} + A_2 (-\imu \eps - B_{21})\Bigl) z^2 \chi^3
+  4 \imu A_2 B_2 z \chi^4\\
& + 2 \Bigl(8 \imu A_2^2 - A_{13} - \imu \eps (2 + 2 B_{21}^2- B_{22} -  2 C_3) + C_4 + 2 B_2 C_{22}\Bigr) z^3 \chi^3  \\
& - 4  \Bigl(2 \imu A_2^2 - 4 \eps B_3 + 4 \imu A_2 B_{12} + B_2 (3 \imu + 2 \eps B_{21}) \Bigr) z^2 \chi^4 \\
& + 2 \Bigl(-\imu A_4 - 2 \imu A_{22} B_2 - \eps B_{13}- 2 A_3 (\eps - \imu B_{21}) +  4 \eps B_{12} B_{21} + A_2 \bigl(4 \eps B_{21} - 2 \imu 
B_{21}^2 \\
    & \qquad +  \imu (-2 + B_{22} + 4 C_3) \bigr) + 2 \imu B_2 C_{13}\Bigr) z^3 \chi^4 \\
& - 4 \Bigl(B_2 (4 A_3 + \imu \eps B_{12}) + A_2 \bigl(-5 B_3 + B_2 (\imu \eps + B_{21}) \bigr)\Bigr) z^2 \chi^5 \\
& +  2 \Bigl(-2 A_3 B_{12} - A_2 \bigl(2 A_3 + B_{13} + B_{12} (-4 \imu \eps - 4 B_{21}) \bigr) + 2 A_2^2 (-4 \imu B_2 + B_{21}) \\
	& \qquad - \imu \eps \bigl(B_4 - 2 (B_{12}^2 + 2 B_3 B_{21})\bigr) + B_2 \bigl(2 A_{13} + 2 B_{21} -  \imu \eps (-2 + B_{22}) - 2 C_4 \bigr) - B_2^2 C_{22} \Bigr) z^3 \chi^5 \\
& -  2 \imu \Bigl(-2 A_4 B_2 - A_{22} B_2^2- 2 A_2^2 B_{12} - 2 B_2 B_{12} - 4 \imu \eps B_3 B_{12} + \imu \eps B_2 B_{13} + 2 A_3 \bigl(B_3\\
	& \qquad  + B_2 (\imu \eps + B_{21})\bigr) + A_2 \bigl(4 \eps B_2^2+ B_4 - 2 B_{12}^2 + B_3 (-4 \imu \eps - 4 B_{21}) + B_2 (-2 + B_{22} + 4 C_3)\bigr) \\
& \qquad  + B_2^2 C_{13} \Bigr) z^3 \chi^6 + 4 B_2 \Bigl(A_3 B_2 - A_2 B_3\Bigr) z^2 \chi^7  \\
& -  2 \Bigl(A_{13} B_2^2+ 2 A_2^2 B_3 + 2 B_2 B_3 - 2 A_3 B_2 B_{12} - A_2 (2 A_3 B_2 -  4 B_3 B_{12} +  B_2 B_{13}) \\
	& \qquad + \imu \eps (2 B_3^2 -  B_2 B_4 + 2 B_2^2C_3) - B_2^2 C_4\Bigr) z^3 \chi^7\\
&  - 2 \imu  \Bigl(A_4 B_2^2- 2 A_3 B_2 B_3 + A_2 (2 B_3^2 - B_2 B_4)\Bigr) z^3 \chi^8
 \end{align*}
The denominator of $H$ is of the following form:
\begin{align*}
& 2 \eps 
+ 6 A_2 \chi 
+ \imu  C_{22} z^2 
+ \Bigl(2 + 4 \imu \eps B_{21}\Bigr) z \chi 
 +  6 \eps B_2 \chi^2 
+  \Bigl(6 \eps A_2 + A_{22} -  C_{13}\Bigr) z^2 \chi \\
& -  6  \Bigl(\imu A_3 + \eps B_{12} + A_2 (\eps - \imu B_{21})\Bigr) z \chi^2 
 +  2  A_2 B_2 \chi^3 
-  \imu  \eps C_{22} z^3 \chi \\
& + \Bigl(12 A_2^2 + \imu A_{13} - 2 \imu B_{21} + \eps (-3 - 2 B_{21}^2+ B_{22} +  3 C_3) - \imu C_4 - 2 \imu B_2 C_{22}\Bigr) z^2 \chi^2
 \end{align*}
 \begin{align*}
& - 2 \Bigl(2 A_2^2 + 4 \imu \eps B_3 + 4 A_2 B_{12} + B_2 (3 - 2 \imu \eps B_{21})\Bigr)  z \chi^3 
+  \Bigl(\eps (-A_{22} + C_{13}) + 2 \imu A_2 (\imu + \eps B_{21}  \\
& \qquad - C_{22})\Bigr)  z^3 \chi^2 +  \Bigl(-A_4 - 2 A_{22} B_2 + 3 B_{12} +  2 A_3 B_{21} + \imu \eps (4 A_3 + B_{13} - 4 B_{12} B_{21}) \\
	& \qquad + A_2 (-2 - 10 \imu \eps B_{21} - 2 B_{21}^2+ B_{22} + 6 C_3) + 2 B_2 C_{13}\Bigr) z^2 \chi^3 \\
& + 2 \imu \Bigl(B_2(4 A_3 + \imu \eps B_{12}) + A_2 \bigl(-5 B_3 + B_2 (\imu \eps + B_{21})\bigr)\Bigl) z \chi^4  - \Bigl(-1 + 2 A_2^2 (8 \eps - \imu B_{21}) - 2 B_{21}^2 \\
& \qquad+ B_{22} + C_3 + \imu \eps \bigl(A_{13} - B_{21} (-1 + C_3) - C_4 \bigr) + 2 A_2 (\imu A_3 + A_{22} + \eps B_{12} - C_{13})\Bigr) z^3 \chi^3 \\
& - \Bigl(-4 \imu B_3 - 2 \imu A_3 B_{12} - \imu A_2 \bigl(4 A_3 + B_{13} + B_{12} (-13 \imu \eps - 4 B_{21})\bigr) + 4 A_2^2 (3 B_2 + 2 \imu B_{21}) \\
	&\qquad + \eps \bigl(B_4 - 2 (B_{12}^2 + 2 B_3B_{21})\bigr) + B_2 \bigl(2 \imu A_{13} +  2 \imu B_{21} + \eps (-8 + B_{22}) - 2 \imu C_4\bigr) - \imu B_2^2C_{22} \Bigr)z^2 \chi^4 \\
& - \Bigl(16 A_2^3 + 2 A_2^2 B_{12} + \imu \bigl(\imu \eps A_4 + B_{13} - 3 B_{12} B_{21} - \imu \eps B_{12} C_3 + A_3 (3 + C_3)\bigr)+ A_2 \bigl(2 \imu A_{13} + 3 \imu B_{21}\\
	&\qquad  -  \imu B_{21} C_3+ \eps (2 \imu B_3 - 6 B_{21}^2+ 3 B_{22} + 8 C_3) - 2 \imu C_4 +  2 B_2 (6 + \imu \eps B_{21} - \imu C_{22})\bigr)\Bigl)  z^3 \chi^4 \\
& + \Bigl(2 A_4 B_2 + A_{22} B_2^2+ 10 A_2^2 B_{12} + 3 B_2 B_{12} + 4 \imu \eps B_3 B_{12} - \imu \eps B_2 B_{13} - 2 A_3 \bigl(B_3 +  B_2 (4 \imu \eps + B_{21})\bigr) \\
	&\qquad - A_2 \bigl(6 \eps B_2^2+ B_4 - 2 B_{12}^2 + B_3 (-16 \imu \eps - 4 B_{21}) + B_2 (-10 + 2 \imu \eps B_{21} + B_{22} + 6 C_3)\bigr)  \\
& \qquad - B_2^2 C_13\Bigr) z^2 \chi^5  - 2 \imu  B_2 \Bigl(A_3 B_2 - A_2 B_3\Bigr) z \chi^6  \\
&- \Bigl(2 A_3^2- 2 B_2 - B_4 + B_{12}^2 + 2 B_3 B_{21} + A_3\bigl(-\imu \eps B_{12} + 2 A_2 (-2 \imu B_2 + B_{21}) \bigr) + B_2 B_{22} + 6 B_2 C_3 \\
	&\qquad  + 2 A_2^2 \bigl(\imu B_3 +  B_2 (8 \eps + \imu B_{21}) - 2 B_{21}^2+ B_{22} + 4 C_3\bigr) + A_2 \bigl(-2 A_4 - 2 A_{22} B_2 - 2 B_{12} \\
	& \qquad + \eps (-2 B_2 B_{12} + 3 \imu B_{13}  - 10 \imu B_{12} B_{21}) + B_{12} C_3 + 2 B_2 C_{13}\bigr) + \imu \eps \bigl(B_3 (1 + C_3) \\
	& \qquad  + B_2 (B_{21} (-5 + C_3) -  B_2 C_{22})\bigr)\Bigr) z^3 \chi^5 +  \Bigl(12 \imu A_2^2 B_3 - 2 \eps B_3^2 + B_2 (4 \imu B_3 + \eps B_4 -  2 \imu A_3 B_{12})\\
&  \qquad - \imu A_2 \bigl(12 A_3 B_2 - 4 B_3 B_{12} + B_2 (3 \imu \eps B_{12} + B_{13})\bigr)  + B_2^2 \bigl(3 \eps (1- C_3)+ \imu A_{13}- \imu C_4\bigr)\Bigr) z^2 \chi^6\\
& + \Bigl(-\imu B_3 B_{12} -  \imu B_2 B_{13} + A_2^2 (2 B_2 B_{12} - 2\ \imu B_{13} + 7 \imu B_{12} B_{21} )+  A_3 \bigl(-2 \eps B_3 - \imu A_2 B_{12}  \\
	&\qquad + B_2 \bigl(4 \eps B_{21} +  2 \imu (-1 + C_3) \bigr) \bigr) + \imu A_2 \bigl(2 A_{13} B_2 + 2 \imu B_2^2+ B_3 - \imu \eps \bigl(3 B_4 - 4 (B_{12}^2 + 2 B_3 B_{21}) \bigr) \\
	&\qquad  - B_3 C_3  + B_2 \bigl(2 \eps B_3 -  B_{21} (-7 + C_3) +  \imu (\eps (B_{22} + 8 C_3) + 2 \imu C_4)\bigr)\bigr) + \eps B_2 \bigl(A_{22} B_2 \\
& \qquad + B_{12} (-4 + C_3) - B_2 C_{13}\bigr)\Bigr) z^3 \chi^6 +  \Bigl(-A_4 B_2^2+ 2 A_3 B_2(-2 \imu \eps B_2 + B_3) + A_2 \bigl(-2 B_3^2 \\
& \qquad + B_2 (4 \imu \eps B_3 + B_4)\bigr)\Bigr) z^2 \chi^7+  \Bigl(A_2^2 \bigl(2 \imu B_2 B_3 + 2 B_4 - 3 (B_{12}^2 + 2 B_3 B_{21}) \bigr) - A_2 \bigl(2 A_4 B_2   \\
	&\qquad +  6 B_2 B_{12} + 6 \imu \eps B_3 B_{12} +  \imu \eps B_2 B_{13} + 2 \imu A_3 B_2 (B_2 + 3 \imu B_{21}) - B_2 B_{12} C_3 \bigr) + B_2 \bigl(B_4   \\
	& \qquad + \imu \eps \bigl(3 A_3 B_{12} + B_3 (-3 + C_3) \bigr)- B_2 (3 - \imu \eps A_{13} + C_3 + \imu \eps C_4)\bigr)\Bigr) z^3 \chi^7 \\
& +  \Bigl(-\eps A_4 B_2^2+ A_2 \bigl(B_3 (2 \eps B_3 - 5 \imu A_2 B_{12}) + B_2 \bigl(\eps B_4 + \imu B_3 (-5 + C_3)\bigr)\bigr) - \imu A_3 B_2 \bigl(-2 \imu \eps B_3  \\
	&\qquad - 5 A_2 B_{12} + B_2 (-5 + C_3)\bigr)\Bigl) z^3 \chi^8  + 2 \Bigl(A_3 B_2 - A_2 B_3\Bigr)^2 z^3 \chi^9
 \end{align*}

\newpage

\section*{Appendix B: Case A and B}
\label{appendix:CaseAandB}
In the proof of \Autoref{lemma:Desingularization} the following diagrams occur: 
\begin{figure}[H]
\begin{center}
\includegraphics[scale=0.68]{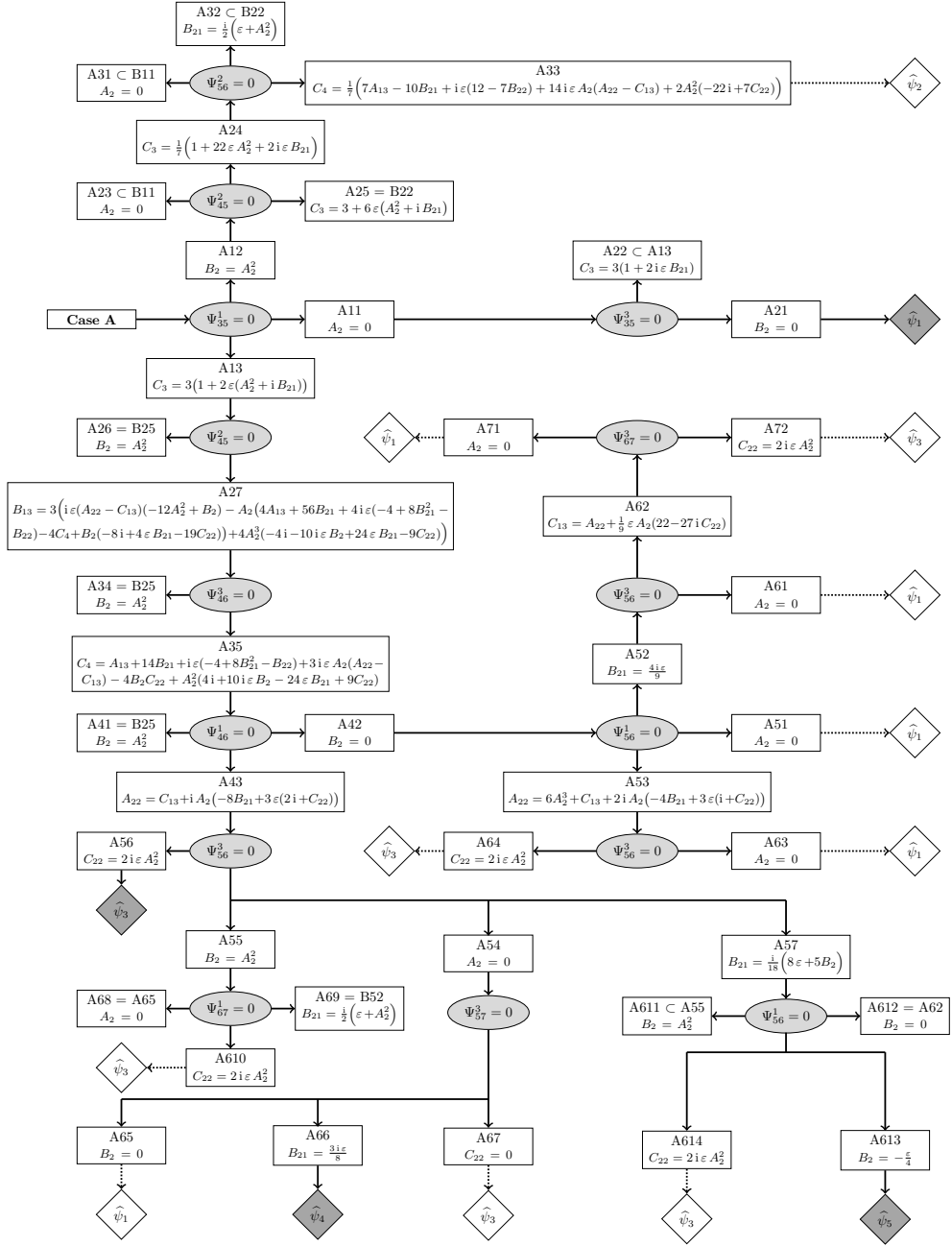}
\vspace{-2cm}
\caption{Diagram for Case A}
\label{figure:CaseA}
\end{center}
\end{figure}

\begin{figure}[H]
\vspace{-0.4cm}
\includegraphics[scale=0.68]{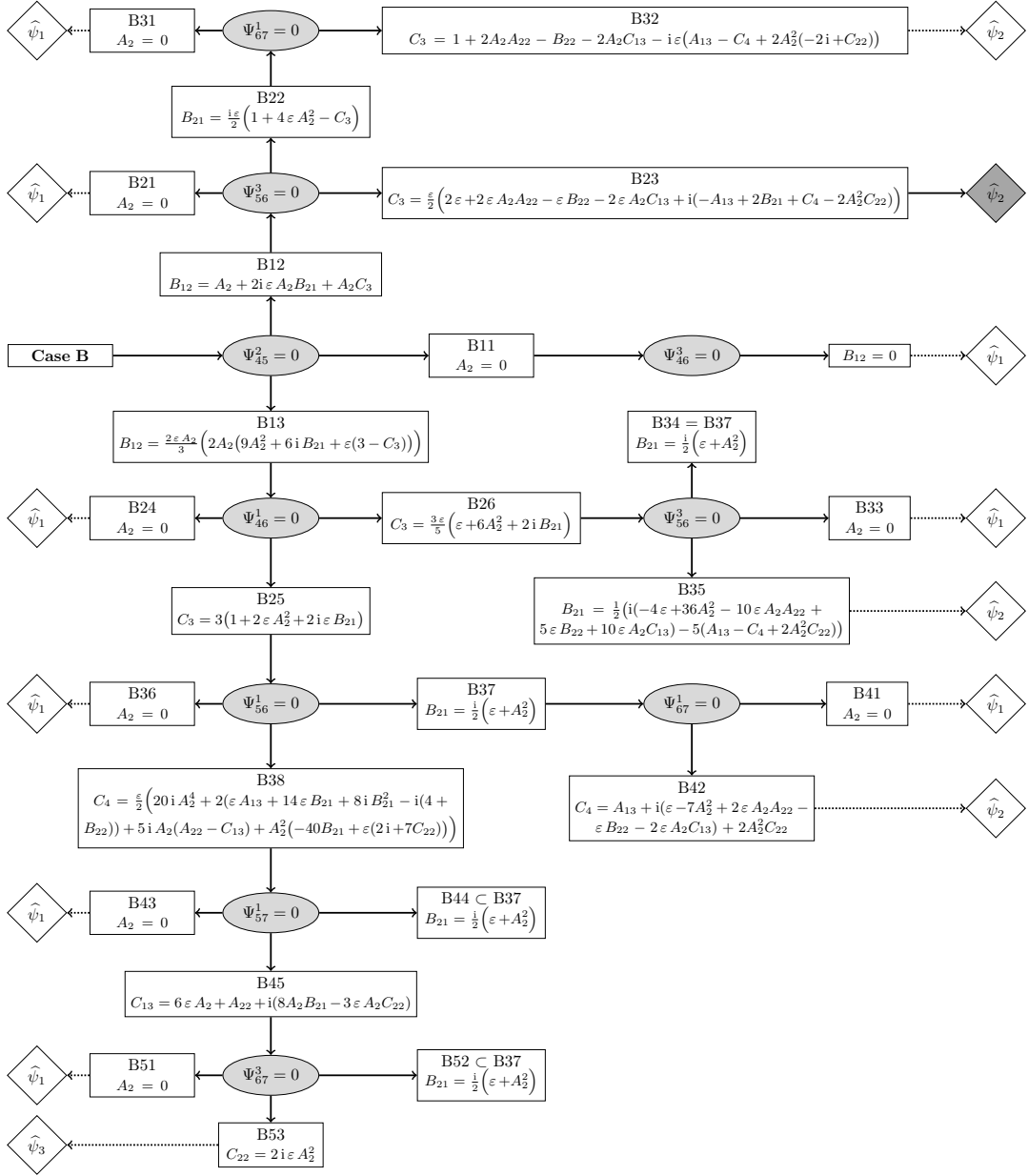}
\vspace{-3cm}
\caption{Diagram for Case B}
\label{figure:CaseB}
\end{figure}

\newpage

\section*{Appendix C: Formulas for \texorpdfstring{$\psi_k$}{PsiK} and \texorpdfstring{$\widehat \psi_k$}{HatPsiK}}
\label{appendix:FormulaPsi}

In \Autoref{lemma:Desingularization} we have the following formulas:
\begin{align*}
\psi_1(z,w)= & \Bigl(2 z \Bigl(8 + 8 B_{21} w + 4 \imu \eps C_{22} z^2 +  \imu \eps A_{22} z w +  (4 + 12 \imu \eps B_{21} - 4 B_{21}^2 - B_{22}) w^2 \Bigr),\\
& \>\>8 z^2 \Bigl( 2 -  \imu (\eps + 3 \imu B_{21}) w \Bigr),
2 w \Bigl(8 - 4 (\imu \eps - 2 B_{21}) w +  4 \imu \eps C_{22} z^2 +  \imu \eps A_{22} z w \\
& \> \> +  (2 + 6 \imu \eps B_{21} - 4 B_{21}^2 - B_{22}) w^2\Bigr) \Bigr) \slash \\
& \> \> \Bigl(16 - 8 (\imu \eps - 2 B_{21}) w +8 \imu \eps C_{22} z^2 + 2 \imu \eps A_{22} z w + 2 (2 \imu \eps B_{21} - 4 B_{21}^2 - B_{22}) w^2  \\
& \> \> - 4 C_{22}  z^2 w- A_{22}  z w^2 -  \bigl(14 B_{21} - \imu \eps (4 - 10 B_{21}^2 - B_{22})\bigr) w^3 \Bigr) \\
\psi_2(z,w)= & \Bigl(2 \Bigl( 16 z + 16 B_{21} z w + 4 A_2 w^2  +  8 \imu  \eps C_{22} z^3 + 2 \bigl(\imu \eps A_{22} + A_2 (8 \imu - 6 \eps B_{21} - 3 C_{22})\bigr) z^2 w \\
& \> \> -\bigl(A_2  A_{22} - 4 (1 + 2 \imu \eps B_{21} + B_{21}^2) + \imu A_2^2 (6 B_{21} + \eps C_{22}) \bigr) z w^2 + 2 \imu  A_2  (\eps + 2 A_2^2\\
& \> \>  + \imu B_{21}) w^3 \Bigr), 4 \Bigl(8 z^2 + 2 A_2^2 w^2 + 8 \eps A_2 z^3  - 4 (\imu \eps - 3 B_{21})  z^2 w + 2 A_2  (2 + 3 \eps A_2^2 \\
& \> \> +  4 \imu \eps B_{21}) z w^2  + \imu A_2^2 (\eps + 2 A_2^2 + \imu B_{21})w^3  \Bigr),
2 w \Bigl( 16 - 8 (\imu \eps - 2 B_{21}) w + 8 \imu \eps C_{22} z^2  + \\
& \> \> 2 \bigl(\imu \eps A_{22} + A_2 (4 \imu  - 6 \eps B_{21} - 3 C_{22})\bigr) z w -  \Bigl(A_2  A_{22} + 4 B_{21} (\imu \eps - B_{21}) + \imu A_2^2 \bigl(6 B_{21}\\
& \> \>  - \eps (4 \imu - C_{22})\bigr)\Bigr) w^2 \Bigr) \Bigr) \slash  \Bigl(32 - 16 (\imu \eps - 2 B_{21}) w + 16 \imu \eps C_{22} z^2  + 4  \bigl(\imu \eps A_{22} - 3 A_2 (2 \eps B_{21} \\
& \> \> + C_{22})\bigr) z w - 2  \Bigl(A_2  A_{22} + 4 (1 + 3 \imu \eps B_{21} - B_{21}^2) + \imu A_2^2 \bigl(6 B_{21} - \eps (12 \imu - C_{22})\bigr)\Bigr) w^2 \\
& \> \> - 8  C_{22} z^2 w - 2  \Bigl(A_{22} + A_2 \bigl(10 \imu B_{21} + \eps (8 - \imu C_{22})\bigr)\Bigr) z w^2 +\bigl(\imu \eps A_2  A_{22} -12 B_{21} \\
& \> \> + 4 \imu \eps (1 - 2 B_{21}^2)  + A_2^2 (12 \imu - 14 \eps B_{21} - C_{22})\bigr)  w^3  \Bigr) \\
\psi_3(z,w)= & \Bigr(4 z - 4 \eps A_2  z^2 + 2 \imu (\eps + \imu B_{21}) z w  + A_2 w^2,
 4 z^2 + w^2 B_2 , 2 w (2 - 2 \eps A_2 z -  B_{21} w)  \Bigr) \slash \\
& \Bigl(4 - 4 \eps A_2 z - 2 B_{21}  w - 2 \imu  A_2 z w - (1 + 2 \eps A_2^2 +   2 \imu \eps B_{21}) w^2  \Bigr)\\
\psi_4(z,w)= & \Bigl(z \Bigl(256 + 96 \imu \eps w + 128 \imu \eps C_{22} z^2 - (5 - 32 \imu \eps B_2 C_{22}) w^2 \Bigr), \\
& \> \> 4 \Bigl(64 z^2 + 16  B_2 w^2 + 4 \imu  \eps z^2 w + \imu  \eps B_2 w^3 \Bigr), w \Bigl(256 - 32 \imu \eps  w+ 128 \imu  \eps C_{22} z^2\\
&\> \>   +  (3 + 32 \imu \eps B_2 C_{22})  w^2  \Bigr)\Bigr)  \slash  \Bigl( 256 - 32 \imu \eps w + 128 \imu  \eps C_{22} z^2 - (13 - 32 \imu \eps B_2 C_{22}) w^2\\ 
& \> \>  - 64  C_{22}  z^2 w - (\imu \eps - 16 B_2 C_{22}) w^3  \Bigr)\\
\psi_5(z,w)= &\Bigl( 256 z + 96 \imu  \eps z w + 64  A_2 w^2 + 128 \imu  \eps C_{22} z^3 + 64 \imu  A_2 z^2  w - (5 - 48 \eps A_2^2   \\
& \>\> + 8 \imu C_{22})  z w^2 +  4 \imu \eps A_2 w^3 ,
 256 z^2 - 16 \eps w^2  + 256  \eps A_2 z^3 + 16 \imu  \eps z^2 w - 16  A_2 z w^2\\
& \> \> - \imu w^3, w \Bigl( 256 - 32 \imu \eps w + 128 \imu \eps C_{22} z^2  -  64 \imu  A_2 z w  +  (3 - 16 \eps A_2^2 - 8 \imu C_{22}) w^2 \Bigr) \Bigr) \slash \\ 
&\> \> \Bigl(256 - 32 \imu \eps w + 128 \imu \eps C_{22} z^2 - 192 \imu  A_2 z w  - (13 + 144 \eps A_2^2 + 8 \imu C_{22}) w^2 \\
& \> \> - 64  C_{22} z^2 w + 8  \eps A_2  (-1 + 8 \imu C_{22}) z w^2 - \eps (\imu + 4 C_{22}) w^3  \Bigr)
\end{align*}

We have $\widehat \psi_k = \psi_k$ for $k=3,4,5$.
\begin{align*}
\widehat \psi_1 (z,w)= & \Bigl(2 z \Bigl(8 \eps + 8 \eps B_{21} w + 4 \imu C_{22} z^2 - 2 \imu  (A_{22} - C_{13}) z w + (\eps - \imu A_{13} + 2 \eps B_{21}^2 - \eps B_{22} \\
& \> \>  - \eps C_3 + \imu C_4)w^2 \Bigr), 4  z^2 \Bigl(4  \eps + \imu (1 - C_3) w \Bigr),
2 w \Bigl( 8 \eps - 4  (\imu - 2 \eps B_{21}) w + 4 \imu C_{22} z^2  \\
& \> \>  -  2 \imu (A_{22} - C_{13}) z w -  \bigl(\imu A_{13} - 2 \eps B_{21}^2 - \eps (2 - B_{22} - 2 C_3) - \imu C_4\bigr) w^2  \Bigr) \Bigr) \slash\\
&\> \>  \Bigl(16 \eps - 8 (\imu - 2 \eps B_{21}) w +  8 \imu C_{22}  z^2 - 4 \imu  (A_{22} - C_{13}) z w - 2  \bigl(\imu A_{13} - 2 \imu B_{21} -2 \eps B_{21}^2 \\
& \> \> - \eps (3 - B_{22} - 3 C_3) - \imu C_4\bigr) w^2 - 4  \eps C_{22} z^2 w +  2  \eps (A_{22} - C_{13}) z w^2 + \bigl(\eps A_{13} + 2 \imu B_{21}^2 \\
& \> \> + \eps B_{21} (1 - C_3) +  \imu (1 - B_{22} - C_3 + \imu \eps C_4) \bigr)  w^3  \Bigr)\\
\widehat \psi_2 (z,w)= & \Bigl( 32 \imu z + 32 \imu B_{21} z w + 8 \imu A_2 w^2 - 16 \eps C_{22} z^3 + 8 \bigl(\eps (A_{22} - C_{13}) + A_2  (2 - 3 \imu C_{22}) \bigr) z^2 w \\
& \> \>+ 2\bigl(\eps A_{13} + 2 \eps B_{21} + 4 \imu B_{21}^2 - \imu B_{22} - \eps C_4 +  4 \imu A_2  (A_{22} - C_{13}) + 6 \eps A_2^2 C_{22}\bigr) z w^2  \\
& \> \> +  A_2\bigl(\imu A_{13} +  6 \imu B_{21} + \eps B_{22} - \imu C_4 -  2 \eps A_2 (A_{22} - C_{13}) + A_2^2 (4 + 2 \imu C_{22})\bigr) w^3, \\
& \> \> 32 \imu z^2 + 8 \imu  A_2^2 w^2 + 32 \imu  \eps A_2 z^3 - 4 \bigl(\imu (A_{13} - 2 B_{21} -  \imu \eps B_{22} - C_4) -  2 \eps A_2(A_{22} - C_{13}) \\
& \>\>+ 2 A_2^2 (6 + \imu C_{22}) \bigr) z^2 w - 4  A_2 \bigl(\imu B_{22} - \eps (A_{13} + 2 B_{21} - C_4) -  2 \imu A_2  (A_{22} - C_{13}) \\
& \>\> +  2 \eps A_2^2  (3 \imu - C_{22})\bigr) z w^2 +  A_2^2(\imu A_{13} + 6 \imu B_{21} + \eps B_{22} - \imu C_4 - 2 \eps A_2(A_{22} - C_{13}) \\
&\> \>+ A_2^2 (4 + 2 \imu C_{22})) w^3, 
4 w \Bigl(8 \imu + 4 (\eps + 2 \imu B_{21}) w  - 4 \eps C_{22} z^2  + 2 \bigl(\eps (A_{22} - C_{13}) \\
& \> \> + A_2 (4 - 3 \imu C_{22}) \bigr) z w  + \bigl(2 (\eps + \imu B_{21}) B_{21} +  \imu A_2  (A_{22} - C_{13}) + 2 \eps A_2^2 (2 \imu + C_{22}) \bigr) w^2 
 \Bigr) \Bigr) \\
&\>\> \slash \Bigl( 32 \imu + 16 (\eps + 2 \imu B_{21}) w - 16 \eps C_{22}  z^2 + 8 \bigl(\eps (A_{22} - C_{13}) + A_2  (6 - 3 \imu C_{22})\bigr) z w \\
& \> \> + 2 \Bigl(4 \imu B_{21}^2 + \imu B_{22} - \eps \bigl(A_{13} - 2 B_{21} -  C_4 - 2 A_2^2 (6 \imu + C_{22})\bigr)\Bigr)  w^2 - 8 \imu C_{22} z^2 w  \\
& \> \> + 4  \Bigl(\imu (A_{22} - C_{13}) + A_2 \bigl(2 B_{21} + \eps (2 \imu + C_{22})\bigr) \Bigr) z w^2 + \Bigl(2 \imu B_{21} + A_{13} (\imu - \eps B_{21}) \\
& \> \> + \imu B_{21} B_{22} - \imu C_4 -\eps (2 B_{21}^2 - B_{22} - B_{21} C_4) - 2 \imu A_2  B_{21} (A_{22} - C_{13}) \\
& \> \> + 2 A_2^2 \bigl(6 - \imu C_{22} + \eps B_{21} (2 \imu - C_{22})\bigr) \Bigr) w^3  \Bigr)
\end{align*}

\section*{Appendix D: Standard Parameters}
\label{appendix:StandardParameters}

Here we give the standard parameters needed in the proofs of \autoref{theorem:ReductionFinite} given in \autoref{section:renormalization} and \autoref{lemma:FixedMapping}. First we list the standard parameters for $H_1$, the renormalization of $\SimpleMapParticularValueTwo{1}{\eps}$. 
\begin{align*}
R_1 \coloneqq & \sqrt{\frac{1 + 6 \eps r_0^2 + r_0^4}{1 + 2 \eps r_0^2 + r_0^4}}, \quad c_{1 1}' \coloneqq \frac{ c_1 (1  - 4 \eps r_0^2 - r_0^4) -  2 \imu \eps r_0 \lambda_1} {\lambda_1 (1+ 6\eps r_0^2 + r_0^4)}, \quad  c_{2 1}' \coloneqq  \frac{2  r_0 (2 c_1 - \imu \eps r_0 \lambda_1)} {\lambda_1 (1+ 6\eps r_0^2 + r_0^4) },\\
\lambda_1' \coloneqq ~ & (\lambda_1 R_1)^{-1},  \quad a_{1 1}' \coloneqq   \frac{ 1  - 4 \eps r_0^2 - r_0^4 }{R_1 (\eps +  r_0^2 )^2}, \quad  a_{2 1}' \coloneqq -\frac{4  r_0} {R_1 (\eps  + r_0^2 )^2}, \quad \lambda_1 \coloneqq ~ \frac{1   + 6 \eps r_0^2 + r_0^4}{2 \sqrt{1  - 2 \eps r_0^2 + r_0^4 }},\\
 c_1 \coloneqq  ~&  \frac{\imu r_0 \lambda_1 (-1+ 4 \eps r_0^2 + r_0^4)} {(\eps - r_0^2) (1 + 6 \eps r_0^2 + r_0^4)},  \qquad u_1 \coloneqq  -1, \qquad  u'_1 \coloneqq  -1,  \qquad r_1' \coloneqq  0, \qquad  r_1 \coloneqq 0.
\end{align*}
We give the standard parameters for $\widetilde H$ for renormalizing $\SimpleMapParticularValueTwo{2}{-}$ in \autoref{lemma:FixedMapping}:
\begin{align*}
R_2  \coloneqq ~& \left(\frac{1 + \sqrt{2} r_0 (e^{-\imu \theta_0} + e^{\imu \theta_0})} {(1 + \sqrt{2} r_0 e^{-\imu \theta_0}) (1 + \sqrt{2} r_0 e^{\imu \theta_0})} \right)^{1/2},\\
S_2  \coloneqq ~& \frac{(1 + \sqrt{2} (e^{-\imu \theta_0} + e^{\imu \theta_0}) r_0 + 2 r_0^2)^2 (2 (e^{-\imu \theta_0} + e^{\imu \theta_0}) r_0 + \sqrt{2} (1 + 2 r_0^2))^2} {(1 + \sqrt{2} r_0 e^{-\imu \theta_0})^4 (1 + \sqrt{2} e^{\imu \theta_0} r_0)^4 (1 + \sqrt{2} (e^{-\imu \theta_0} + e^{\imu \theta_0}) r_0)^2}
\\ c'_{1 2} \coloneqq ~& \bigl((e^{\imu \theta_0} +  \sqrt{2} r_0) (-c_2 u_2 (1 + 3 r_0^2 + 2 e^{2 \imu \theta_0} r_0^2 +  2 \sqrt{2} e^{\imu \theta_0} r_0 (1 + r_0^2) - \imu v_0)  \\
& \quad + \imu e^{\imu \theta_0} r_0 (1 + \sqrt{2} e^{\imu \theta_0} r_0) \lambda_2)\bigr) \slash \bigl((1 + \sqrt{2} e^{\imu \theta_0} r_0) (e^{\imu \theta_0} + \sqrt{2} r_0 + \sqrt{2} e^{2 \imu \theta_0} r_0) \lambda_2\bigr) \\
c'_{2 2} \coloneqq ~& \bigl((e^{\imu \theta_0} + \sqrt{2} r_0) (c_2 u_2 (-r_0 (3 r_0 + 2 e^{2 \imu \theta_0} r_0 +  2 \sqrt{2} e^{\imu \theta_0} (1 + r_0^2)) + \imu v_0) \\
& \quad +  \imu e^{\imu \theta_0} r_0 (1 + \sqrt{2} e^{\imu \theta_0} r_0) \lambda_2)\bigr) \slash \bigl((1 + \sqrt{2} e^{\imu \theta_0} r_0) (e^{\imu \theta_0} + \sqrt{2} r_0 + \sqrt{2} e^{2 \imu \theta_0} r_0) \lambda_2\bigr)\\
\lambda'_2 \coloneqq ~&  (\lambda_2 R_2)^{-1}\\
a'_{1 2} \coloneqq ~&  \frac{1 + 3 r_0^2 + 2 e^{-2 \imu \theta_0} r_0^2 +  2 \sqrt{2} e^{-\imu \theta_0} r_0 (1 + r_0^2) + \imu v_0} {u_2 u_2' R_ 2 (1 + \sqrt{2} e^{-\imu \theta_0} r_0)^2}\\
a'_{2 2} \coloneqq ~& -\frac{3 r_0^2 + 2 e^{-2 \imu \theta_0} r_0^2 +  2 \sqrt{2} e^{-\imu \theta_0} r_0 (1 + r_0^2) +  \imu v_0} {u_2 u_2' R_2 (1 + \sqrt{2} e^{-\imu \theta_0} r_0)^2} \\
u'_2 \coloneqq ~&\frac{e^{\imu \theta_0} (\sqrt{2} r_0 + \sqrt{2} e^{-2 \imu \theta_0} r_0 +  e^{-\imu \theta_0} (1 + 2 r_0^2)) (2 r_0 + 2 e^{-2 \imu \theta_0} r_0 +  \sqrt{2} e^{-\imu \theta_0} (1 + 2 r_0^2))} {(1 + \sqrt{2} e^{-\imu \theta_0} r_0)^4 (e^{-\imu \theta_0} + \sqrt{2} r_0 + \sqrt{2} e^{-2 \imu \theta_0} r_0) S_2 u_2^3} \\
u_2 \coloneqq ~& \frac{2  S_2 (1 + \sqrt{2} r_0 e^{-\imu \theta_0})^4 (1 + \sqrt{2} e^{\imu \theta_0} r_0)^4 (1 + \sqrt{2} r_0e^{-\imu \theta_0} +  \sqrt{2} e^{\imu \theta_0} r_0)} {(1 + \sqrt{2} r_0 e^{-\imu \theta_0} + \sqrt{2} e^{\imu \theta_0} r_0 + 2 r_0^2) (\sqrt{2} + 2 r_0 e^{-\imu \theta_0} + 2 e^{\imu \theta_0} r_0 + 2 r_0^2)^3}\\
\lambda_2 \coloneqq ~& \frac{\sqrt{2} S_2 (1 + \sqrt{2} r_0 e^{-\imu \theta_0})^4 (1 + \sqrt{2} e^{\imu \theta_0} r_0)^4 (1 + \sqrt{2} r_0e^{-\imu \theta_0} +  \sqrt{2} e^{\imu \theta_0} r_0)^2} {(1 + \sqrt{2} r_0 e^{-\imu \theta_0} + \sqrt{2} e^{\imu \theta_0} r_0 + 2 r_0^2)^2 (\sqrt{2} + 2 r_0 e^{-\imu \theta_0} + 2 e^{\imu \theta_0} r_0 + 2 r_0^2)^2}
\end{align*}
The remaining parameters $c_2,r_2$ and $r_2'$ are set to $0$.

We give the standard parameters for the map $H_3$ for the renormalization of $\SimpleMapParticularValueTwo{3}{-}$:
\begin{align*}
R_3 \coloneqq ~& \sqrt{ \frac{-1 + r_0^4}{r_0^4}}, \qquad c'_{1 3} \coloneqq ~ \frac{c_3 r_0^4} {\lambda_3 (1 - r_0^4)}, \qquad  c'_{2 3} \coloneqq  \frac{r_0(\imu c_3 r_0 + \lambda_3)}{\lambda_3 (1-r_0^4)}, \qquad \lambda_3'  \coloneqq ~ \Bigl(\lambda_3 R_3\Bigr)^{-1}, \\
a'_{1 3} \coloneqq ~& -\imu/R_3, \qquad a'_{2 3}\coloneqq  1/(r_0^2 R_3), \qquad c_3  \coloneqq ~ \frac{\imu(-1 + 3 r_0^4)}{8 r_0^2}, \qquad \lambda_3 \coloneqq  \frac{-1 + r_0^4}{2 r_0},\qquad  u'_3 \coloneqq ~\imu,
\end{align*}
and the remaining parameters $u_3, r_3'$ and $r_3$ to be trivial.

If we consider $H_4$ we renormalize the mapping $\SimpleMapParticularValueTwo{4}{\eps}=(f_{1 p_0},f_{2 p_0},g_{p_0})$. Here we use the following standard parameters, which only cover the case when $g_{p_0 w}(0)>0$. If $g_{p_0 w}(0)<0$ we need to interchange some of the standard parameters given here as described in the proof of \autoref{theorem:ReductionFinite}: 
\begin{align*}
R_4 \coloneqq ~& \sqrt{3}\sqrt{\frac{\eps + 14 r_0^4 + \eps r_0^8}{1+ 3 \eps r_0^4}}, \qquad c'_{1 4} \coloneqq ~  \frac{4 c_4 r_0^2 u (-1 + r_0^4 \eps) - 8 \imu r_0^5 \eps \lambda_4} {(14 r_0^4 + \eps +  r_0^8 \eps) \lambda_4}, \\
c'_{2 4} \coloneqq ~& \frac{c_4 u_4 (-1 + 3 r_0^8 + 14 r_0^4 \eps) - 8 \imu r_0^3 \eps \lambda_4} {\sqrt{3} (14 r_0^4 + \eps +  r_0^8 \eps) \lambda_4}, \qquad
\lambda_4'  \coloneqq  \Bigl(\lambda_4 R_4\Bigr)^{-1}  \\
a'_{1 4} \coloneqq ~& \frac{-12  r_0^2 (-1 + r_0^4 \eps)} {u_4 u_4' R_4 (1 + 3 r_0^4 \eps)^2}, \qquad \qquad \qquad  \quad a'_{2 4}\coloneqq -\sqrt{3} \frac{1 - 3 r_0^8 - 14 r_0^4 \eps} {u_4 u_4' R_4 (1 + 3 r_0^4 \eps)^2} \\
c_4 \coloneqq ~& \frac{\imu r_0^3 (-7 - 26 r_0^8 + 9 r_0^{16} - 36 r_0^4 \eps +  60 r_0^{12} \eps) \lambda_4} {u_4 (-19 r_0^4 - 38 r_0^{12} +  9 r_0^{20} - (1 + 74 r_0^8 - 123 r_0^{16}) \eps)}
\end{align*}
\begin{align*}
\lambda_4 \coloneqq ~& \left( 4 \sqrt{3} r_0 \left|\frac{\eps - r_0^4}{1+ 14 \eps r_0^4 + r_0^8}\right|\right)^{-1}, \qquad \quad u'_4 \coloneqq \frac{\sgn(r_0^4 - \eps)} {u_4^3 \sgn(1 + r_0^8 + 14 r_0^4 \eps)} \\
u_4 \coloneqq ~&  \left(\frac{1 - \eps}2\right) \left( \frac{\sgn(-1 - 33 r_0^4 + 33 r_0^8 + r_0^{12})}{ \sgn(1 - 14 r_0^4 + r_0^8)}\right) - \left(\frac{1 + \eps} 2\right) \sgn(-1 + 34 r_0^4 -  34 r_0^{12} + r_0^{16})
\end{align*}
The remaining parameters $r_4$ and $r_4'$ are taken to be $0$.

\address{Texas A\&M University at Qatar, PO Box 23874, Doha, Qatar}\\
\email{michael.reiter@qatar.tamu.edu}

\end{document}